\newcommand{\abs}[1]{\ensuremath{\left| #1 \right|}}
\newcommand{\defeq}{:=}
\newcommand{\Exp}{\operatorname{Exp}}
\newcommand{\dist}{\operatorname{dist}}
\renewcommand{\epsilon}{\varepsilon}
\newtheorem{theorem}{Theorem}[section]
\newtheorem{lemma}[theorem]{Lemma}
\newtheorem{proposition}[theorem]{Proposition}
\newtheorem{corollary}[theorem]{Corollary}
\theoremstyle{definition}
\newtheorem{remark}[theorem]{Remark}
\newcommand{\Poiss}{\operatorname{Poiss}}
\numberwithin{equation}{section}
\numberwithin{theorem}{section}
\begin{document}
%\begin{titlepage}
\title{First-passage percolation on Cartesian power graphs}
\author{Anders Martinsson}
\thanks{Research supported by a grant from the Swedish Research Council.}
\address{Department of Mathematical Sciences, Chalmers University Of Technology and University of Gothenburg, 41296 Gothenburg, Sweden}
%\address{Mathematical Sciences, Chalmers, 41296 Gothenburg, Sweden} 
%\address{Mathematical Sciences, University of Gothenburg,  41296 Gothenburg, Sweden}
\email{andemar@chalmers.se}
%\date{\today}
\keywords{first-passage percolation, power graph, high dimension, time constant, hypercube}
\subjclass[2010]{Primary 60C05; secondary 60K35, 82B43}

\begin{abstract}
We consider first-passage percolation on the class of ``high-dimensi-onal'' graphs that can be written as an iterated Cartesian product $G\square G \square \dots \square G$ of some base graph $G$ as the number of factors tends to infinity. We propose a natural asymptotic lower bound on the first-passage time between $(v, v, \dots, v)$ and $(w, w, \dots, w)$ as $n$, the number of factors, tends to infinity, which we call the critical time $t^*_G(v, w)$. Our main result characterizes when this lower bound is sharp as $n\rightarrow\infty$. As a corollary, we are able to determine the limit of the so-called diagonal time-constant in $\mathbb{Z}^n$ as $n\rightarrow\infty$ for a large class of distributions of passage times.
\end{abstract}

\maketitle

\section{Introduction}
For any pair of graphs $H_1=(V_1, E_1)$ and $H_2=(V_2, E_2)$, their Cartesian graph product, denoted by $H_1\square H_2$, is defined as the graph with vertex set equal to the Cartesian product $V_1 \times V_2 = \{(v_1, v_2): v_1\in V_1, v_2\in V_2\}$ and edge set equal to the disjoint union $(E_1\times V_2) \sqcup (V_1 \times E_2)$, where an edge of the form $(v, e)$ or $(e, v)$ is interpreted as an edge between $(v, w_1)$ and $(v, w_2)$ or between $(w_1, v)$ and $(w_2, v)$ respectively, where $w_1$ and $w_2$ denote the end-points of $e$. Furthermore, this edge has the same type (undirected, directed, loop) as $e$. We remark that this product is associative up to graph isomorphisms.

First-passage percolation on a graph $H=(V, E)$ is defined in the following way: For each edge $e$ in $H$, we assign an independent non-negative random weight called its passage time, denoted by $T^F_H(e)$, according to some non-negative random distribution $F$. For each path $\gamma$ in $H$ we define its passage time as $T^F_H(\gamma) = \sum_{e\in \gamma}T^F_H(e)$. Furthermore, for each pair of vertices $v, w$ in $H$ we define the first-passage time from $v$ to $w$ as $T^F_H(v, w) = \inf_{\gamma \text{ from }v\text{ to }w} T^F_H(\gamma)$. Unless states otherwise, we will assume the passage times have standard exponential distribution. For this choice of passage times we suppress the superscript and write $T_H(\cdot)$ and $T_H(\cdot, \cdot)$.

In the standard exponential case, first-passage percolation is closely related to the Richardson model, see for instance Section 3 of \cite{BlairStahn}. This is the continuous-time Markov chain with state space $\{0, 1\}^V$. Each vertex in $H$ is assigned either of the states \emph{healthy}, represented by a $0$, or \emph{infected}, represented by a $1$. A healthy vertex becomes infected at rate equal to its number of infected neighbors, and an infected vertex stays infected forever. It is well-known that if the Richardson model is started with $v$ as the only infected vertex, then it is possible to couple this model to first-passage percolation on $H$ such that $w$ is infected a time $t$ if $T_H(v, w) \leq t$.

In the classical setting of first-passage percolation, $H$ is assumed to be the $d$-dimensional integer lattice $\mathbb{Z}^d$ with the nearest neighbour graph structure for some $d\geq 2$. Despite much research on this topic, there are still many central properties of first-passage percolation on $\mathbb{Z}^d$ which are poorly understood. One avenue to obtain quantitative results in this model has been to consider the limiting behaviour in high dimensions. In \cite{Kesten} (see Chapter 8), Kesten gave estimates for the time constants in $\mathbb{Z}^d$ in directions $(1, 0, 0, \dots 0)$ and $(1, 1, \dots 1)$ as $d\rightarrow\infty$ and used this to show that, for a large class of distributions of passage times, the limit shape is not the Euclidean ball in sufficiently high dimension. In the case of $\Exp(1)$ passage times more precise estimates were given by \cites{Dhar86, Dhar88, CEG11}. A recent paper \cite{AT16} extends this to more general distributions. In \cite{FP93}, Fill and Pemantle proposed the $n$-dimensional hypercube as an alternative high-dimensional graph, and this was subsequently studied in \cites{A89,FP93,BK97,M16}.

In this paper, we will consider first-passage percolation on a generalized class of ``high-dimensional'' graphs, namely sequences of graphs $\{G^n\}_{n=1}^\infty$  where $G^n$ is the $n$:th Cartesian power $G\square \dots \square G$ of some fixed base graph $G$. For any vertex $v\in G$ we let $\bar{v} = (v, v, \dots, v) \in G^n$. Here we allow $G$ to have either a finite or (countably) infinite vertex set, it may have a mixture of directed and undirected edges and it may have multiple edges between the same pair of vertices. In order for sums in our analysis to converge we will always assume that $G$ has bounded degree. As loops do not affect first--passage times, all graphs considered in this paper are, without loss of generality, assumed to be loopless.

For graphs of the form $G^n$, there is a natural lower bound on the first-passage time between two vertices. Let us for now assume that edge passage times are $\Exp(1)$ distributed. This be generalized later. For any graph $H$, and any pair of vertices $v, w\in H$, let $\Gamma_H(v, w)$ denote the set of not necessarily self-avoiding paths from $v$ to $w$ in $G$, and let $\Gamma_G^{sa}(v, w)$ denote the subset consisting of all self-avoiding paths. Here we consider the sequence $\{v\}$ to be a self-avoiding path from $v$ to itself of length $0$. Then, for any $t\geq 0$ we have the union bound
\begin{equation}\label{eq:firstmoment}
\mathbb{P}\left( T_H(v, w) \leq t\right) \leq \sum_{\gamma\in \Gamma_H^{sa}(v, w)} \mathbb{P}\left(T_H(\gamma) \leq t\right).
\end{equation}

The following estimate for the distribution of sums of independent exponential random variables is  well-known. A short proof will be given in Section \ref{sec:basicprop}.
\begin{proposition}\label{prop:sumofexp1}
Let $S_n$ be a sum of $n$ independent $\operatorname{Exp}(1)$ random variables. Then, for any $t\geq 0$, 
\begin{equation}
e^{-t} \frac{t^n}{n!} \leq \mathbb{P}(S_n \leq t) \leq \frac{t^n}{n!}
\end{equation}
\end{proposition}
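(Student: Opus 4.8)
The plan is to use the exact distribution of $S_n$. Recall that $S_n$ follows a Gamma distribution with shape parameter $n$ and rate $1$; explicitly, its density on $[0,\infty)$ is $f_n(s) = s^{n-1}e^{-s}/(n-1)!$. I would first establish this by a short induction on $n$: for $n=1$ it is just the $\Exp(1)$ density, and since $S_n = S_{n-1} + X_n$ with $X_n \sim \Exp(1)$ independent of $S_{n-1}$, its density is the convolution
\[
f_n(s) = \int_0^s \frac{u^{n-2}e^{-u}}{(n-2)!}\, e^{-(s-u)}\,\rd u = \frac{e^{-s}}{(n-2)!}\int_0^s u^{n-2}\,\rd u = \frac{s^{n-1}e^{-s}}{(n-1)!}.
\]

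With the density in hand, write $\mathbb{P}(S_n\le t) = \int_0^t \frac{s^{n-1}e^{-s}}{(n-1)!}\,\rd s$ and estimate the integrand crudely on $[0,t]$. For the upper bound use $e^{-s}\le 1$, which gives $\mathbb{P}(S_n\le t)\le \int_0^t \frac{s^{n-1}}{(n-1)!}\,\rd s = \frac{t^n}{n!}$; for the lower bound use $e^{-s}\ge e^{-t}$, which gives $\mathbb{P}(S_n\le t)\ge e^{-t}\int_0^t \frac{s^{n-1}}{(n-1)!}\,\rd s = e^{-t}\frac{t^n}{n!}$.

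There is essentially no obstacle here; the only step needing a line of justification is the formula for the Gamma density, and even that can be bypassed. One alternative is a direct induction on $n$ for the two inequalities themselves, using $\mathbb{P}(S_n\le t) = \int_0^t e^{-x}\,\mathbb{P}(S_{n-1}\le t-x)\,\rd x$ and substituting the inductive bounds (the base case $n=0$, i.e.\ $e^{-t}\le 1 \le 1$, is trivial). A second alternative is the Poisson representation $\mathbb{P}(S_n\le t) = \sum_{k\ge n} e^{-t}t^k/k!$: the lower bound is the single term $k=n$ together with nonnegativity of the rest, while the whole sum is at most $e^{-t}\frac{t^n}{n!}\sum_{j\ge 0} t^j/j! = \frac{t^n}{n!}$ because $n!/(n+j)! \le 1/j!$. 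I would present the density computation as the cleanest of these three routes.
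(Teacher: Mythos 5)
Your proof is correct, and the core estimate is the same as the paper's: sandwich the exponential weight between $e^{-t}$ and $1$ on the event $\{S_n\le t\}$, leaving a polynomial integral equal to $t^n/n!$. The only real difference is the representation you bound in. The paper never invokes the Gamma density: it writes $\mathbb{P}(S_n\le t)$ as the $n$-fold integral of $e^{-t_1-\dots-t_n}$ over the simplex $\{t_1+\dots+t_n\le t,\ t_i\ge 0\}$, bounds the integrand by $1$ (resp.\ $e^{-t}$), and computes the simplex volume as $t^n/n!$ via the substitution $s_i=t_1+\dots+t_i$. You instead first establish the density $s^{n-1}e^{-s}/(n-1)!$ by a convolution induction and then bound $e^{-s}$ on $[0,t]$; this costs one extra (routine) lemma but reduces everything to a one-dimensional integral. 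In effect your route is the paper's argument with the simplex-volume computation packaged into the Gamma density, so the trade-off is purely cosmetic: the paper's version is self-contained in a single display, yours leans on a standard fact many readers already know. Your two alternatives are also sound; in particular the Poisson-process identity $\mathbb{P}(S_n\le t)=\sum_{k\ge n}e^{-t}t^k/k!$ gives arguably the slickest proof of both bounds, using $n!/(n+j)!\le 1/j!$ for the upper one.
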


Using Proposition \ref{prop:sumofexp1} to bound the right-hand side of \eqref{eq:firstmoment}, it is easily seen that
\begin{equation}\label{eq:patmostm}
\mathbb{P}\left( T_H(v, w) \leq t\right) \leq m_H(v, w, t) \defeq \sum_{\gamma \in \Gamma_H(v, w)} \frac{t^{\abs{\gamma}}}{\abs{\gamma}!}
\end{equation}
where $\abs{\gamma}$ denotes the length of $\gamma$, that is, the number of  edges in $\gamma$ counted with multiplicity. Note that this sum now goes over all paths from $v$ to $w$ and not just the self-avoiding ones. For $v=w$, we interpret the term $\frac{t^0}{0!}$ as $1$ also for $t=0$. We will below refer to this function as the generating function of the graph $H$. It can be noted that $m_H(v, w, t)$ is the $vw$:th element of $e^{tA_H}$ where $A_H$ denotes the adjacency matrix of $H$.

When $H$ is a Cartesian product, the generating function can be simplified by the following observation. Again, this will be proven in Section \ref{sec:basicprop}.
\begin{proposition}\label{prop:multiplicativity}
Let $H_1$ and $H_2$ be two graphs with vertices $v_1, w_1 \in H_1$ and $v_2, w_2 \in H_2$. Then
\begin{equation}
m_{H_1 \square H_2}( (v_1, v_2), (w_1, w_2), t) = m_{H_1}(v_1, w_1, t) m_{H_2}(v_2, w_2, t).
\end{equation}
\end{proposition}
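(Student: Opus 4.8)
The plan is to give a direct combinatorial proof by exhibiting a bijection between paths in the product graph and triples consisting of an ``interleaving pattern'' together with one path in each factor. Fix a path $\gamma$ of length $\ell$ in $H_1\square H_2$ from $(v_1,v_2)$ to $(w_1,w_2)$. By the definition of the Cartesian product, each of the $\ell$ steps of $\gamma$ changes exactly one coordinate, and the type and orientation of the edge used coincides with that of the corresponding edge in the relevant factor. Recording which of the $\ell$ steps act on the first coordinate gives a subset $S\subseteq\{1,\dots,\ell\}$; the steps indexed by $S$, read off in the first coordinate, form a path $\gamma_1$ from $v_1$ to $w_1$ in $H_1$ of length $\abs{S}$, and the remaining steps form a path $\gamma_2$ from $v_2$ to $w_2$ in $H_2$ of length $\ell-\abs{S}$. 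Conversely, any choice of $\ell\ge 0$, a subset $S\subseteq\{1,\dots,\ell\}$, and paths $\gamma_1,\gamma_2$ in $H_1,H_2$ of lengths $\abs{S}$ and $\ell-\abs{S}$ reconstructs a unique such $\gamma$. One should check that this is genuinely a bijection: that interleaving two paths along any pattern always produces a legitimate path in the product (immediate from the description of the edge set of $H_1\square H_2$), and that the two constructions are mutually inverse.

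Granting the bijection, let $N_i(j)$ be the number of paths of length $j$ from $v_i$ to $w_i$ in $H_i$, so that $m_{H_i}(v_i,w_i,t)=\sum_{j\ge 0} N_i(j)\,t^j/j!$. Classifying paths in the product by their length $\ell$ and the size $k=\abs{S}$ of the interleaving set gives
\begin{equation}
m_{H_1\square H_2}((v_1,v_2),(w_1,w_2),t)=\sum_{\ell\ge 0}\frac{t^\ell}{\ell!}\sum_{k=0}^{\ell}\binom{\ell}{k}N_1(k)\,N_2(\ell-k).
\end{equation}
Using $\tfrac{1}{\ell!}\binom{\ell}{k}=\tfrac{1}{k!\,(\ell-k)!}$ and recognizing the right-hand side as a Cauchy product yields
\begin{equation}
\sum_{\ell\ge 0}\sum_{k=0}^{\ell}\frac{N_1(k)\,t^k}{k!}\cdot\frac{N_2(\ell-k)\,t^{\ell-k}}{(\ell-k)!}=\left(\sum_{k\ge 0}\frac{N_1(k)\,t^k}{k!}\right)\left(\sum_{j\ge 0}\frac{N_2(j)\,t^j}{j!}\right),
\end{equation}
which is exactly $m_{H_1}(v_1,w_1,t)\,m_{H_2}(v_2,w_2,t)$.

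The only point requiring care is the passage from the double series to the product of two series, which needs absolute convergence. This is where the standing bounded-degree hypothesis enters: if $\Delta$ bounds the degrees in $H_1$ and $H_2$, then $N_i(j)\le \Delta^{j}$, so each series $m_{H_i}(v_i,w_i,t)$ is dominated by $e^{\Delta t}<\infty$ for every $t\ge 0$ and all series in sight converge absolutely; equivalently, the identity holds coefficientwise as formal power series in $t$ and both sides are everywhere convergent. I would treat verifying the bijection as essentially bookkeeping, and flag the convergence step as the one place where an actual hypothesis of the paper is used.

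An alternative, essentially equivalent, route is operator-theoretic: the adjacency matrix of $H_1\square H_2$ equals $A_{H_1}\otimes I + I\otimes A_{H_2}$, the two summands commute, and hence $e^{t(A_{H_1}\otimes I + I\otimes A_{H_2})}=e^{tA_{H_1}}\otimes e^{tA_{H_2}}$; extracting the $((v_1,v_2),(w_1,w_2))$ entry and invoking the remark that $m_H(\cdot,\cdot,t)$ is the corresponding entry of $e^{tA_H}$ gives the claim at once. I would still prefer the combinatorial argument, since it keeps the convergence issues for infinite base graphs transparent and avoids setting up exponentials of infinite matrices.
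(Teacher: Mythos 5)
Your argument is correct and is essentially the paper's own proof: both decompose a path in $H_1\square H_2$ into its two coordinate projections together with an interleaving pattern, the $\binom{\abs{\gamma_1}+\abs{\gamma_2}}{\abs{\gamma_1}}$ choices of pattern producing exactly the binomial factor that turns the double sum into the product of the two generating functions. Your extra care about absolute convergence (via the bounded-degree bound $N_i(j)\le\Delta^j$) and the operator-exponential aside are fine additions but do not change the substance of the argument.
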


Letting $H=G^n$, it follows that
\begin{equation}\label{eq:factorfirstmoment}
\mathbb{P}\left( T_{G^n}((v_1, \dots, v_n), (w_1, \dots, w_n)) \leq t\right) %&\leq m_{G^n}((v_1, \dots, v_n), (w_1, \dots, w_n), t)\\ &
\leq  \prod_{i=1}^n m_G(v_i, w_i, t).
\end{equation}
In particular, if we focus on the first-passage time between vertices of the form $\bar{v}$ and $\bar{w}$, we obtain the bound
\begin{equation}\label{eq:mvvvwww}
\mathbb{P}\left( T_{G^n}(\bar{v}, \bar{w}) \leq t\right) \leq m_G(v, w, t)^n.
\end{equation}
As a consequence of this, the critical value of $t$, $t^*=t^*_G(v, w)$, given by the solution to $m_G(v, w, t)=1$, is an asymptotic lower bound on $T_{G^n}(\bar{v}, \bar{w})$ in the sense that
\begin{equation}\label{eq:Tcritlb}
\mathbb{P}\left(T_{G^n}(\bar{v}, \bar{w}) \leq t^*-\varepsilon\right) \rightarrow 0 \text{ as } n\rightarrow\infty
\end{equation}
for any fixed $\varepsilon>0$. It is clear that $t^*_G(v, w)$ exists whenever there is a path from $v$ to $w$ in $G$. Otherwise we consider the critical time to be infinite.

Given this lower bound, it is natural to ask what can be said further about the asymptotics of $T_{G^n}(\bar{v}, \bar{w})$. In particular, under which assumptions on $G$, $v$ and $w$ is it true that $T_{G^n}( \bar{v}, \bar{w} )$ converges to $t^*_G(v, w)$ in some sense as $n\rightarrow\infty$? Indeed, convergence in this way has previously been shown for oriented and unoriented versions of the hypercube (see Subsection \ref{ssec:hcube}). Moreover, it follows from Propositions \ref{prop:sumofexp1} and \ref{prop:selfavoid} below that $m_{G^n}(\bar{v}, \bar{w}, t^*) = 1$ is only a constant factor away from the expected number of self-avoiding paths from $\bar{v}$ to $\bar{w}$ in $G^n$ with passage time at most $t^*$. Nevertheless, it turns out that the question of convergence depends non-trivially on $G$, $v$ and $w$.

While the above analysis is stated for standard exponential weights, heuristically, optimal paths in high dimensional first-passage percolation typically consist of edges with short passage times. Therefore, one would expect that only the density of the distribution near zero ultimately matters for the large $n$ limit of the first-passage time. For any $\rho\in [0, \infty]$, we define
\begin{equation}
\mathcal{C}(\rho) := \{ F \text{ non-negative distribution}: \lim_{x\downarrow 0} F(x)/x = \rho\},
\end{equation}
and
\begin{equation}
\mathcal{C}_{L^1}(\rho) := \{ F \in \mathcal{C}(\rho) : \int_0^\infty x\,dF(x) < \infty \}.
\end{equation}

The main result of this paper is a necessary and sufficient condition for for $T^F_{G^n}(\bar{v}, \bar{w})$ to converge to $t^*_G(v, w)/\rho$ as $n\rightarrow\infty$ for any $F\in \mathcal{C}(\rho)$ for $\rho\in(0, \infty)$. For a given $G$, $v$ and $w$ such that $t^*_G(v, w)<\infty$, we define the function
\begin{equation}\label{eq:fst}
f_G^{vw}(s, t)=\sum_{x, y \in G} m_G(v, x, s) m_G(x, y, t) m_G(y, w, t^*-s-t)  \ln\left( m_G(x, y, t)\right)
\end{equation}
whose domain is the set of all pairs $s, t\geq 0$ such that $s+t\leq t^*$. Here we interpret any term of the form $0 \ln 0$ as $0$. One can show, see Proposition \ref{prop:unifconv}, that this sum is uniformly convergent on the domain of $f_G^{vw}$ for any fixed graph $G$. As a consequence, this function is continuous. Further, one can note that $f_G^{vw}(0, t^*)=f_G^{vw}(s, 0)=0$ for any $0\leq s \leq t^*$.
\begin{theorem}\label{thm:main} Let $\rho\in (0, \infty)$ and $F\in \mathcal{C}(\rho)$. Let $G$ be a bounded degree graph, and let $v$ and $w$ be vertices in $G$ such that there exists a path from $v$ to $w$ in $G$. If $f_G^{vw}(s, t) \leq 0$ for all $s, t \geq 0$ such that $s+t\leq t^*=t^*_G(v, w)$, then
\begin{equation*}
T^F_{G^n}(\bar{v}, \bar{w}) \rightarrow \frac{t^*}{\rho} \text{ in probability as }n\rightarrow\infty.
\end{equation*}
Moreover, if $F\in\mathcal{C}_{L^1}(\rho)$, then convergence holds also in $L^1$. On the other hand, if $f_G^{vw}(s, t)>0$ for some such $s$ and $t$, then there exists a $c=c(G, v, w)>0$ such that
\begin{equation*}
\mathbb{P}\left( T^F_{G^n}(\bar{v}, \bar{w}) > \frac{t^*+c}{\rho} \right)\rightarrow 1 \text{ as } n\rightarrow\infty.
\end{equation*}
\end{theorem}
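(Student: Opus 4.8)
The plan is to treat the two directions separately, and in both cases to reduce to the standard-exponential case by a truncation/comparison argument exploiting that only the behaviour of $F$ near $0$ matters. For the $F$-to-$\operatorname{Exp}(1)$ reduction, I would first argue that if $F\in\mathcal C(\rho)$ then, for any $\delta>0$, the passage times can be sandwiched between rescaled exponentials: there is a coupling in which each edge weight is at least $(1-\delta)\rho^{-1}$ times an $\operatorname{Exp}(1)$ variable conditioned to be small, and at most $(1+\delta)\rho^{-1}$ times one, with the ``bad'' edges (large weights) forming a subcritical structure that optimal paths avoid. This is the heuristic quoted before the theorem; making it precise is routine but technical, and it lets me assume $F=\operatorname{Exp}(1)$, $\rho=1$ for the heart of the argument, with the target value $t^*$.

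For the \textbf{lower bound direction} (when $f_G^{vw}(s,t)>0$ for some admissible $s,t$), the idea is a second-moment obstruction. Equation \eqref{eq:mvvvwww} already gives $\mathbb P(T_{G^n}(\bar v,\bar w)\le t^*)\le 1$, but this is not enough; I want to show the expected number of self-avoiding paths of passage time $\le t^*$ actually \emph{decays} like it would under a large-deviation penalty. The function $f_G^{vw}$ is exactly the first variation of $\ln m_{G^n}$ along a perturbation in which a middle segment of the diagonal path is ``sped up'': picking a window of $t$-mass on a coordinate block of size $\approx\epsilon n$ starting at time $s$ and forcing the passage time there to be atypically small multiplies the path count by $\exp(n f_G^{vw}(s,t)+o(n))$ via Proposition \ref{prop:multiplicativity} and a Laplace/saddle-point estimate on the convolution structure $m_G(v,x,s)m_G(x,y,t)m_G(y,w,t^*-s-t)$. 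If $f_G^{vw}(s,t)>0$, then conditioning on reaching $\bar w$ by time $t^*$ forces the path to use such an atypically-fast segment somewhere, but the entropy cost of doing so is not compensated — running the union bound over all choices of window and block shows $\mathbb P(T_{G^n}(\bar v,\bar w)\le t^*)\to 0$, and a compactness argument on the (continuous, by Proposition \ref{prop:unifconv}) function $f_G^{vw}$ upgrades this to $t^*+c$ for a fixed $c>0$.

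For the \textbf{upper bound direction} (when $f_G^{vw}\le 0$ everywhere), I would build an explicit near-optimal path by a greedy multi-stage construction, analogous to what is known for the hypercube. Split the $n$ coordinates into $k$ blocks and build the path block-by-block: in stage $j$ one is at a vertex agreeing with $\bar w$ on the first $j$ blocks and with $\bar v$ on the rest, and one searches within the next block (and using a small reservoir of fresh coordinates for ``room to maneuver'') for a short path advancing one more block. The point of the hypothesis $f_G^{vw}\le 0$ is that it guarantees the second-moment method succeeds at each stage: the expected number of self-avoiding paths realizing each local target is bounded below, and $f_G^{vw}\le 0$ controls the second moment — the dominant contribution to $\mathbb E[(\#\text{paths})^2]$ comes from pairs of paths sharing an initial segment, and the ratio $\mathbb E[(\#)^2]/\mathbb E[\#]^2$ is governed precisely by $\sup_{s,t} e^{n f_G^{vw}(s,t)}$, which is $O(1)$ under the hypothesis. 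Then Paley--Zygmund plus a union bound over stages gives $T_{G^n}(\bar v,\bar w)\le t^*+o(1)$ with high probability, and combined with the lower bound \eqref{eq:Tcritlb} yields convergence in probability; the $L^1$ statement follows by additionally controlling the upper tail via a crude deterministic bound using a fixed path and the assumption $\int x\,dF(x)<\infty$.

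The \textbf{main obstacle} is the second-moment computation in the upper bound: one must show that $f_G^{vw}\le 0$ is not just necessary but sufficient to kill the correlation between overlapping self-avoiding paths, which requires carefully identifying that the worst overlap pattern corresponds to two paths that coincide on a prefix and then split, so that the variance-to-mean-squared ratio is exactly an exponential in $n\sup f_G^{vw}$ up to subexponential factors — the bookkeeping of self-avoidance constraints (passing between $\Gamma_G$ and $\Gamma_G^{sa}$ via Proposition \ref{prop:selfavoid}) and the Laplace asymptotics for the multi-variable convolution $m_G$ is where the real work lies. A secondary subtlety is making the block construction robust when $G$ is infinite but bounded-degree, so that all the relevant sums (and the uniform convergence in Proposition \ref{prop:unifconv}) can be invoked uniformly in $n$.
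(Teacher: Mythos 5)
Your reduction from general $F\in\mathcal{C}(\rho)$ to $\Exp(1)$ is in the same spirit as the paper's Theorem \ref{thm:gendist}, but both main directions of your argument have genuine gaps. For the direction $f_G^{vw}>0$: the stated goal, ``show the expected number of self-avoiding paths of passage time $\le t^*$ decays'', is unachievable, because by Propositions \ref{prop:sumofexp1} and \ref{prop:selfavoid} that expectation is bounded \emph{below} by a positive constant at $t^*$ (and is exponentially large at $t^*+c$), so no union bound over paths, however you stratify them by windows and blocks, can yield $\mathbb{P}(T_{G^n}(\bar v,\bar w)\le t^*+c)\to 0$. The content of $f_G^{vw}(s,t)>0$ is that the unit first moment is carried by configurations whose middle segment satisfies $m_{G^n}(x,y,t)\approx e^{nf}\gg 1$, and to turn this overcounting into a probability penalty one must replace the expected number of middle segments by the probability $\mathbb{P}(T_{G^n}(x,y)\le t)\le 1$. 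That requires an inequality decomposing $\mathbb{P}(T_{G^n}(\bar v,\bar w)\le s+t+u)$ through intermediate vertices, which is \emph{not} an event inclusion (the paper stresses this): it is Proposition \ref{prop:propconv}, proved via a competing-types Richardson-model argument, after which the capping at $1$ is implemented smoothly via $\mathbb{P}\le m^{1-\alpha}$ and the expansion \eqref{eq:gtaylor}. Your sketch contains no substitute for this step, and ``entropy cost not compensated'' does not supply one.

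For the direction $f_G^{vw}\le 0$: the greedy block-by-block construction cannot reach the constant $t^*$, because $t^*$ is dimension-free and so is the cost of ``advancing one block'': the first-passage time between vertices differing in $cn$ coordinates is bounded below by a constant depending only on $c$ (in the hypercube it is $\vartheta(c)$, Theorem \ref{thm:nonantipod}), so $k$ sequential stages cost about $k\,\vartheta(1/k)\to\infty$ rather than $\ln(1+\sqrt2)$; optimal paths must progress in all $n$ coordinates in parallel, and any sequential decomposition into blocks loses an unbounded factor. Moreover, even a global second-moment computation under the bare hypothesis $f\le 0$ does not close as you claim: if $f$ vanishes at some interior $(s,t)$, or merely fails to be $\le -\Omega(t(t^*-t))$, then $e^{n\alpha f}$ no longer beats the polynomial-in-$n$ factors from locating the overlap window nor the $O((t^*-t)\alpha^2)$ error terms, which is precisely why the paper first proves the positive-probability bound under the stronger hypothesis and then manufactures strict negativity in general by passing to the weighted product $G^{n-k}\square P^k$; also the dangerous overlap is a shared \emph{middle} window at arbitrary $(s,t)$ (this is why the hypothesis quantifies over all $s$), not prefix-sharing. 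Finally, Paley--Zygmund only gives a probability bounded away from zero; the paper upgrades this to convergence in probability and in $L^1$ through the recursive inequality in Subsection \ref{sec:recineq} (domination by an $o(1)$ connector plus the minimum of two independent copies of $T_{G^{n-2}}(\bar v,\bar w)$), and once the block construction is discarded your proposal has no mechanism for this boosting step.
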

We remind the reader that $G$ in the above theorem is allowed to have either finite or infinite vertex sets. It may have multiple edges between the same pair of vertices, and may have a mixture of directed and undirected edges, but is, without loss of generality, assumed to be loopless.

We will prove Theorem \ref{thm:main} in two steps. We first show the above results in the case where $F=\Exp(1)$. The following result immediately extends this to all $F\in\mathcal{C}(\rho)$.
\begin{theorem}\label{thm:gendist}
Let $\rho\in (0, \infty)$ and $F\in \mathcal{C}(\rho)$. Let $\{H_n\}_{n=1}^\infty$ be a sequence of bounded-degree graphs (not necessarily uniformly bounded), and let $v_n, w_n$ be vertices on $H_n$. If
\begin{equation*}
T_{H_n}(v_n, w_n) \rightarrow t\text{ in probability as }n\rightarrow\infty,
\end{equation*}
then
\begin{equation*}
T^{F}_{H_n}(v_n, w_n) \rightarrow \frac{t}{\rho} \text{ in probability as }n\rightarrow\infty.
\end{equation*}
Moreover, if $F\in \mathcal{C}_{L^1}(\rho)$, the same statement holds for convergence in $L^1$. On the other hand if $\mathbb{P}(T_{H_n}(v_n, w_n) < t) \rightarrow 0$ as $n\rightarrow\infty$, then $\mathbb{P}(T^{F}_{H_n}(v_n, w_n) < t/\rho - o(1)) \rightarrow 0$ for all $F\in \mathcal{C}(\rho)$.
\end{theorem}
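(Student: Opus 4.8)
The plan is to compare $T^F_{H_n}$ with the exponential first-passage time $T_{H_n}$ by means of the quantile coupling. Take i.i.d.\ weights $U_e$ uniform on $[0,1]$ and set $E_e:=-\ln(1-U_e)\sim\Exp(1)$ and $X_e:=F^{-1}(U_e)\sim F$, where $F^{-1}$ denotes the right-continuous generalized inverse; then $X_e=g(E_e)$ for the non-decreasing function $g(y):=F^{-1}(1-e^{-y})$, and the hypothesis $F\in\mathcal{C}(\rho)$ with $\rho\in(0,\infty)$ is equivalent to $g(y)/y\to 1/\rho$ as $y\downarrow 0$. Fixing $\epsilon\in(0,1/\rho)$, there is $\delta=\delta(\epsilon)>0$ with $(1/\rho-\epsilon)y\le g(y)\le(1/\rho+\epsilon)y$ for $0\le y\le\delta$, and, since $g$ is non-decreasing, $g(y)\ge(1/\rho-\epsilon)\min(y,\delta)$ for all $y\ge0$.

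From the lower bound on $g$, under the coupling $T^F_{H_n}(v_n,w_n)=\inf_\gamma\sum_{e\in\gamma}g(E_e)\ge(1/\rho-\epsilon)\,T^{\wedge\delta}_{H_n}(v_n,w_n)$, where $T^{\wedge\delta}_{H_n}$ is first-passage percolation with weights $E_e\wedge\delta$. From the upper bound, if $\gamma$ is any path from $v_n$ to $w_n$ every edge of which has $E_e\le\delta$, then $T^F_{H_n}(v_n,w_n)\le(1/\rho+\epsilon)\sum_{e\in\gamma}E_e$. Hence, letting $\epsilon\downarrow0$ (with a diagonal argument in $n$), the theorem reduces to two statements about ordinary exponential first-passage percolation on bounded-degree graphs: (i) if $\mathbb{P}(T_{H_n}(v_n,w_n)<t)\to0$, then $\mathbb{P}(T^{\wedge\delta}_{H_n}(v_n,w_n)<t-o(1))\to0$ for every fixed $\delta>0$; and (ii) if moreover $T_{H_n}(v_n,w_n)\to t$ in probability, then, writing $H_n^\delta$ for the spanning subgraph of edges with $E_e\le\delta$, one has $T_{H_n^\delta}(v_n,w_n)\to t$ in probability. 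Statement (ii) yields the in-probability upper bound, statement (i) the in-probability lower bound and the final assertion.

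I expect (i)--(ii) to be the main obstacle. Together they assert that, in the regime where the exponential first-passage time converges to a constant, optimal paths do not rely on a bounded-from-below amount of weight sitting on any single ``heavy'' edge, so that capping edge weights at a fixed level does not change the first-passage asymptotics. Two heuristics underlie this. First, because the total optimal weight is $O(1)$ and $H_n$ has bounded degree, a near-optimal path carries at most $O(1)$ edges of weight exceeding $\delta$; and when $t>0$ the generating-function bound \eqref{eq:patmostm} together with Proposition~\ref{prop:sumofexp1} force the optimal path length to tend to infinity, so that only a vanishing fraction of its weight can sit on finitely many edges. Second, a genuine reliance on a heavy edge $e_0$ would inject non-vanishing randomness into $T_{H_n}(v_n,w_n)$: conditioned on all other weights, $T_{H_n}(v_n,w_n)$ is a non-decreasing $1$-Lipschitz function of $E_{e_0}$ that is eventually constant, so on the event that the optimum uses $e_0$ with $E_{e_0}$ bounded away from both $0$ and the value at which the optimum switches away from $e_0$, the first-passage time inherits the non-degenerate conditional law of $E_{e_0}$, contradicting convergence to a constant. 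Converting this into (i)--(ii) will require making the rerouting of the $O(1)$ heavy edges through low-weight detours quantitative, exploiting the memorylessness of the exponential; the passage between $T^{\wedge\delta}_{H_n}$, $T_{H_n^\delta}$ and $T_{H_n}$ is then comparatively routine, using that for $s<\delta$ the events $\{T^{\wedge\delta}_{H_n}(v_n,w_n)\le s\}$ and $\{T_{H_n}(v_n,w_n)\le s\}$ coincide and iterating along a decomposition of a near-optimal path into segments of weight below $\delta$.

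For the $L^1$ statements, assume $F\in\mathcal{C}_{L^1}(\rho)$. In-probability convergence having been established, Fatou's lemma gives $\liminf_n\mathbb{E}\,T^F_{H_n}(v_n,w_n)\ge t/\rho$, so it remains to bound the expectation from above, i.e.\ to prove uniform integrability of $\{T^F_{H_n}(v_n,w_n)\}$. For this, split $X_e=(X_e\wedge M)+(X_e-M)^+$: the bounded part is controlled by the bounded-weight case above, while for the excess one dominates $T^F_{H_n}(v_n,w_n)$ by the $F$-weight of a single near-optimal exponential path and uses $\int_0^\infty x\,\rd F(x)<\infty$, hence $\mathbb{E}[(X-M)^+]\to0$ as $M\to\infty$, together with the tail estimates of \eqref{eq:patmostm} and Proposition~\ref{prop:sumofexp1} on the length and weight of that path, to make the excess term negligible uniformly in $n$.
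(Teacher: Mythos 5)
Your reduction via the quantile coupling $T^F_{H_n}(e)=g(T_{H_n}(e))$ is exactly the paper's starting point, but the two statements you defer to, (i) and (ii), are not a technical remainder: they \emph{are} the theorem. They amount to showing that, in the concentration regime, neither near-optimal exponential paths nor fast $F$-paths can rely on edges of weight exceeding $\delta$, and your proposal offers only heuristics for this. The first heuristic is in fact flawed: the total passage time converges to the constant $t$, so a single heavy edge contributes at least $\delta$, a \emph{non-vanishing} fraction of the total weight, no matter how long the optimal path is; divergence of the path length does not help. The second heuristic (a pivotal heavy edge would inject non-degenerate randomness into $T_{H_n}(v_n,w_n)$) is the right idea, but as stated it requires a quantitative argument that is uniform over the unboundedly many candidate edges $e_0$, and your proposed route of ``rerouting the $O(1)$ heavy edges through low-weight detours'' presupposes detours whose existence and cost you cannot control in an arbitrary bounded-degree graph. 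So there is a genuine gap at the heart of the argument, which you yourself flag as ``the main obstacle.''

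For comparison, the paper closes this gap without any rerouting, by a resampling device: attach independent clocks $\tau_e\sim\Exp(\lambda)$ and set $T^\lambda_{H_n}(e)=\min(T_{H_n}(e),\tau_e)$, so that $T^\lambda_{H_n}(v_n,w_n)$ has the same law as $\tfrac{1}{1+\lambda}T_{H_n}(v_n,w_n)$. Lemma \ref{lemma:Flower} then proves the lower bound (your (i) and the final assertion) by noting that a fast $F$-path has at most a bounded number of edges with $T^F_{H_n}(e)\geq h(\delta)$, and with probability bounded away from zero all their clocks $\tau_e$ are small, turning it into a fast $T^\lambda$-path; and the same device shows that with probability tending to one every edge of the $\Exp(1)$-geodesic $\Gamma_n$ has weight at most $\delta$, which is precisely your (ii). Your $L^1$ sketch has a second, related gap: bounding the excess by ``the $F$-weight of a single near-optimal exponential path'' ignores the selection bias of edges lying on $\Gamma_n$ (their number is unbounded and their conditional weight distribution is tilted), and $\mathbb{E}[(X-M)^+]\to 0$ alone does not control $\mathbb{E}\sum_{e\in\Gamma_n}h(T_{H_n}(e))\mathbbm{1}_{T_{H_n}(e)>\delta}$. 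The paper handles this by introducing $f_n(t)=\sum_{e}\mathbb{P}(e\in\Gamma_n\mid T_{H_n}(e)=t)$, using its monotonicity in $t$ and proving $f_n(\delta)\to 0$; some argument of this kind (or a substitute for it) is missing from your proposal.
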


For any explicit graph $G$ and vertices $v, w \in G$, we can at least in principle compute $f_G^{vw}(s, t)$ in order to check whether or not the criterion in Theorem \ref{thm:main} holds. Unfortunately, these calculations are often intractable by hand, and one has to resort to numerical computations.

\begin{figure}[h!]
\begin{tabular}{m{0.3\textwidth} m{0.6\textwidth}}
% Define style for nodes
\tikzstyle{every node}=[circle, draw, fill=black!50,
                        inner sep=0pt, minimum width=4pt]
\begin{tikzpicture}[thick,scale=1]
	\draw (0,0) node (a) {}
	-- ++(150:2cm) node (b) {}
	-- ++(270:2cm) node (v) [label=below:$v$] {}
	-- (a) {}
	-- ++(0:2cm) node (w) [label=below:$w$] {};
\end{tikzpicture}
&
\includegraphics[scale=.7]{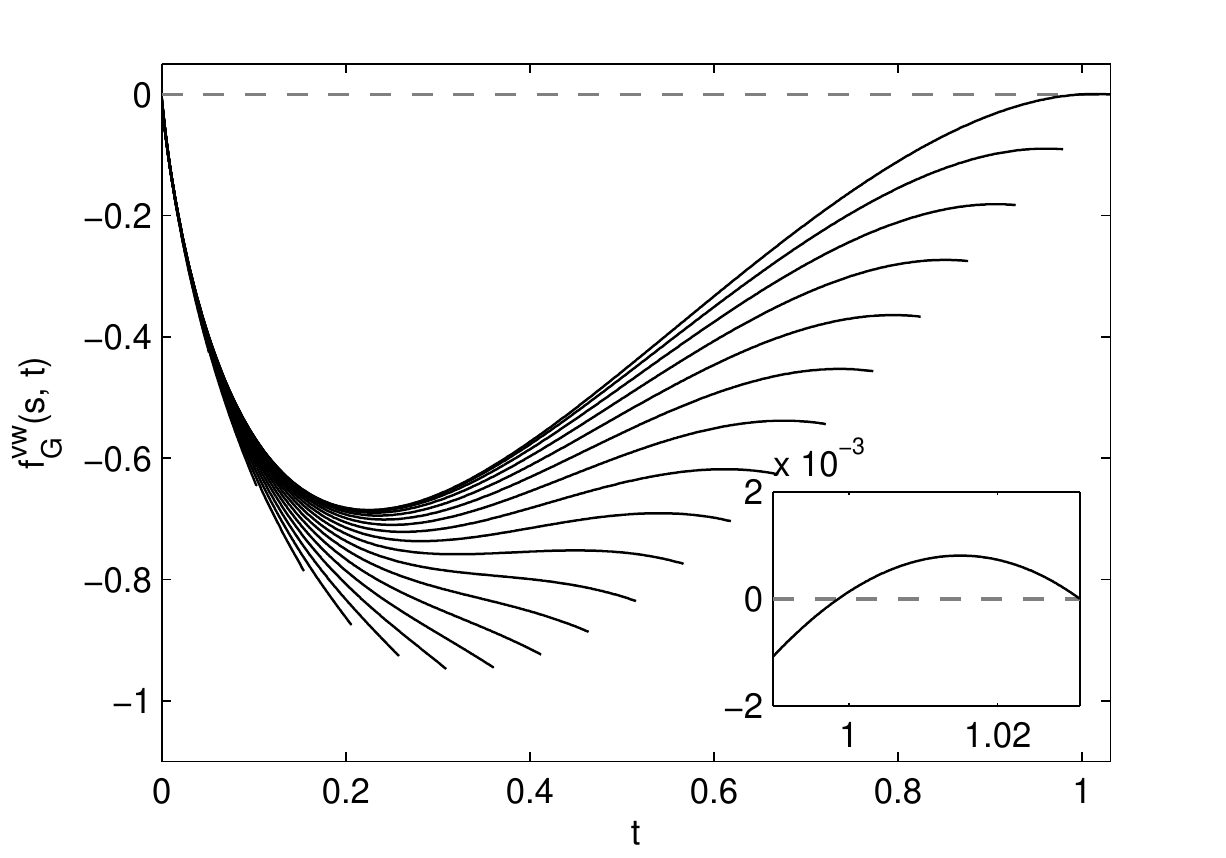}
\end{tabular}
\caption{\label{fig:paw}The smallest example of a simple graph $G$ and vertices $v$, $w$ such that $T_{G^n}(\bar{v}, \bar{w})\not\rightarrow t^*$. In this case we have $t^*\approx 1.03$. The plot on the right shows $f_G^{vw}(s, t)$ as a function of $t$ for different values of $s$. At first glance this function seems non-positive, but upon more careful inspection one sees it attains positive values for small $s$ and $t$ close to $t^*$.}
\end{figure}

By brute-force searching through small graphs, it seems to be fairly common for $f_G^{vw}(s, t)$ to be non-positive for all $s, t$. The smallest example of a simple graph where $f_G^{vw}$ has a positive global maximum is the \emph{paw graph} as shown in Figure \ref{fig:paw}, with $v$ and $w$ as indicated in the figure. It can be noted that the maximum is very close to $0$ ($\approx 0.0008$) in this case, but we believe that this is still sufficiently large not to be an artefact of numerical error. Another noteworthy example is if we take $G$ to be a path graph of length $k$, that is $G^n$ is the $n$-dimensional $(k+1)$-ary hypercube, with $v$ and $w$ as opposite endpoints. Then, we can see numerically that $f_G^{vw}(s, t)$ is non-positive for $k=1, \dots 5$, and appears to attain positive maxima for all $k\geq 6$.

Given the number of spurious graphs for which convergence to $t^*$ holds respectively does not hold, it seems unlikely that the condition in Theorem \ref{thm:main} has some natural reformulation in terms of standard graph properties. Slightly less ambitiously, one might ask if there is some simple sufficient condition on $G$, $v$ and $w$ for convergence. Indeed, at the time of writing, all counter-examples the author has found consist of non-regular graphs. While it remains an open problem whether or not regularity or transitivity is sufficient, we have the following result to this end:

\begin{proposition}\label{prop:charf}
Let $G$ be a graph with bounded degree, and let $v$ and $w$ be fixed vertices in $G$ such that $t^*(v, w)<\infty$. Suppose there is a permutation $\sigma$ of the vertices of $G$ such that, for all vertices $x\in G$, we have
\begin{equation*}
(v, x) \cong (\sigma(x), w)\text{ and }(x, w)\cong (v, \sigma(x)),
\end{equation*}
where $(v_1, v_2) \cong (w_1, w_2)$ denotes that $G$ has a graph automorphism $\varphi$ such that $\varphi(v_1)=w_1$ and $\varphi(v_2)=w_2$. Then $f_G^{vw}(s, t)\leq 0$ for all $s, t$.
\end{proposition}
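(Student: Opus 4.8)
The plan is to turn the automorphism hypotheses into identities for the generating function $m_G$, to use these to show that $f_G^{vw}(s,t)$ does not in fact depend on $s$, and then to bound the resulting one–variable function. \textbf{Step 1.} Any graph automorphism $\varphi$ of $G$ restricts to a bijection between walks of a given length from $x$ to $y$ and from $\varphi(x)$ to $\varphi(y)$, so $(A_G^k)_{xy}=(A_G^k)_{\varphi(x)\varphi(y)}$ for all $k\ge0$, and hence $m_G(x,y,r)=m_G(\varphi(x),\varphi(y),r)$ for all $r\ge0$. Applying this to the automorphisms supplied by the hypothesis gives, for every $x\in V(G)$ and every $r\ge0$,
\begin{equation*}
m_G(v,x,r)=m_G(\sigma(x),w,r)\qquad\text{and}\qquad m_G(x,w,r)=m_G(v,\sigma(x),r).
\end{equation*}
(In particular, taking $x=v$ in the first identity forces $\sigma(v)=w$ and $x=w$ in the second forces $\sigma(w)=v$, though this is not needed below.)

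\textbf{Step 2.} Fix $s,t\ge0$ with $s+t\le t^*$ and set $u:=t^*-s-t$. In the inner sum over $y$ in \eqref{eq:fst}, for each fixed $x$ substitute $y\mapsto\psi_x^{-1}(y)$, where $\psi_x$ is an automorphism with $\psi_x(x)=v$ and $\psi_x(w)=\sigma(x)$; by Step 1 this turns $m_G(x,y,t)$ into $m_G(v,y,t)$, turns $m_G(y,w,u)$ into $m_G(y,\sigma(x),u)$, and turns $\ln m_G(x,y,t)$ into $\ln m_G(v,y,t)$, so that $f_G^{vw}(s,t)=\sum_x m_G(v,x,s)\sum_y m_G(v,y,t)\,m_G(y,\sigma(x),u)\,\ln m_G(v,y,t)$. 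Next substitute $x\mapsto\sigma^{-1}(x)$ in the outer sum; using the first identity of Step 1 this replaces $m_G(v,\sigma^{-1}(x),s)$ by $m_G(x,w,s)$ and replaces $m_G(y,\sigma(\sigma^{-1}(x)),u)$ by $m_G(y,x,u)$, so after interchanging the two sums and using $\sum_x m_G(y,x,u)m_G(x,w,s)=m_G(y,w,s+u)=m_G(y,w,t^*-t)$ (semigroup property of $\{e^{rA_G}\}_{r\ge0}$) we obtain
\begin{equation*}
f_G^{vw}(s,t)=\sum_{y}m_G(v,y,t)\,m_G(y,w,t^*-t)\,\ln m_G(v,y,t)=:\hat f(t),
\end{equation*}
which is independent of $s$; equivalently $f_G^{vw}(s,t)=f_G^{vw}(0,t)$. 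The rearrangements are legitimate because all the sums converge uniformly, by Proposition~\ref{prop:unifconv}.

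\textbf{Step 3.} It remains to show $\hat f(t)\le0$ for $0\le t\le t^*$. The weights $m_G(v,y,t)m_G(y,w,t^*-t)$ are nonnegative and sum to $m_G(v,w,t^*)=1$, so $\hat f(t)=\mathbb{E}[\ln m_G(v,Y,t)]$ for a random vertex $Y$ with this law; moreover $\hat f(0)=\hat f(t^*)=0$ (using $m_G(v,y,0)=\mathbbm{1}[y=v]$ and $m_G(v,w,t^*)=1$, with $0\ln0$ read as $0$). By Jensen's inequality applied to the concave function $\ln$,
\begin{equation*}
\hat f(t)\le\ln\hat Q(t),\qquad \hat Q(t):=\sum_y m_G(v,y,t)^2\,m_G(y,w,t^*-t),
\end{equation*}
so it is enough to prove $\hat Q(t)\le1$ on $[0,t^*]$. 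Since $\hat Q(0)=\hat Q(t^*)=1$, it would suffice to prove $\hat Q$ convex. Each summand of $\hat Q$ is a product of $m_G(v,y,t)^2$, which has nonnegative Taylor coefficients in $t$ and is therefore convex in $t$, and $m_G(y,w,t^*-t)$, which is nonnegative and convex in $t$ on $[0,t^*]$; so the only term in $\hat Q''$ without an obvious sign is the (nonpositive) cross term $4\sum_y m_G(v,y,t)\,\partial_t m_G(v,y,t)\,\partial_t m_G(y,w,t^*-t)$, which has to be absorbed by the rest. I expect this to come out by expanding $\partial_t m_G(v,y,t)$ and $\partial_t m_G(y,w,t^*-t)$ as sums over in- and out-neighbours of $y$ and re-indexing as in Step 2, or by running the Step 2 collapse on $\hat Q$ itself.

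\textbf{The main obstacle.} Steps 1--2 are the heart of the argument and use the identities of Step 1 only pointwise. The delicate part is Step 3, i.e. proving $\hat Q(t)\le1$ (equivalently $\hat f\le0$): a crude Cauchy--Schwarz bound on $\hat Q$ only yields $\hat Q(t)\le\bigl(\sum_y m_G(v,y,t)^2\bigr)^{1/2}$, which is typically bigger than $1$, so one must genuinely exploit that the automorphisms $\psi_x$ (and their counterparts $\varphi_x$) move the summation variable in a controlled way, or supply the missing convexity input. I would expect this last step — a convexity statement for $\hat Q$ or for $\hat f$, proved by a second pass of the symmetry-driven re-indexing — to be where the real work lies.
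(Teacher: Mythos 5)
Your Steps 1 and 2 are correct, and they are in substance the paper's Lemma \ref{lemma:indepofs}: using both families of automorphisms together with the convolution identity of Proposition \ref{prop:convolution}, the function $f_G^{vw}(s,t)$ collapses to the $s$-independent quantity $\hat f(t)=\sum_y m_G(v,y,t)\,m_G(y,w,t^*-t)\ln m_G(v,y,t)$, and the interchanges of summation are covered by the uniform convergence in Proposition \ref{prop:unifconv}.

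The gap is Step 3, which you yourself flag as unfinished, and it is exactly where the real argument lies. Your Jensen reduction replaces the goal $\hat f\le 0$ by the claim $\hat Q(t):=\sum_y m_G(v,y,t)^2\,m_G(y,w,t^*-t)\le 1$, which you do not prove and which does not obviously follow from the hypotheses: convexity of $\hat Q$ cannot be deduced term-by-term, since a product of nonnegative convex functions need not be convex, and the negative cross term you single out is precisely the whole content of such a claim — no bound for it is offered. The paper instead proves convexity of $\hat f$ itself, by an information-theoretic argument that your proposal does not contain. By Proposition \ref{prop:convisprob}, the finite-dimensional laws of the conditioned random walk $\{X_t\}_{t=0}^{t^*}$ are products of generating functions, and the $s$-independence from Step 2 lets one write, for $0\le a\le b\le t^*$, $\mathbb{H}(X_b\mid X_a)=\hat f(t^*-a)-\hat f(b-a)-\hat f(t^*-b)$. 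Since the conditioned walk is Markov, $\mathbb{H}(X_b\mid X_a)$ is non-increasing in $a$ for fixed $b$, and comparing $a=t^*-t-\delta$ with $a=t^*-t$ at $b=t^*-\delta$ yields $\hat f(t-\delta)+\hat f(t+\delta)-2\hat f(t)\ge 0$, i.e.\ midpoint convexity; continuity upgrades this to convexity, and together with $\hat f(0)=\hat f(t^*)=0$ this forces $\hat f\le 0$. Without this convexity input (or an equivalent replacement), your argument does not establish the proposition; the reduction to one variable is necessary but far from sufficient.
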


\begin{corollary}\label{cor:suffG}
Let $G$ be a connected graph with bounded degree, and let $F \in \mathcal{C}(\rho)$ for $\rho\in(0, \infty)$. Each of the following conditions on $G$ is sufficient for $T^F_{G^n}(\bar{v}, \bar{w}) \rightarrow t^*_G(v, w)/\rho$ in probability, and assuming $F \in \mathcal{C}_{L^1}(\rho)$ also in $L^1$, as $n\rightarrow\infty$ for all $v, w \in G$:
\begin{enumerate}[label=\roman*)]
\item $G$ is a Cayley graph of a group $\mathcal{G}$ generated by a finite normal set $S$, that is $\abs{S}<\infty$ and $g S g^{-1} = S\,\forall g\in \mathcal{G}$. In particular, this always holds for Cayley graphs of finitely generated abelian groups.
\item For any pair of vertices $x, y \in G$, we have $(x, y)\cong(y, x)$.
\end{enumerate}
\end{corollary}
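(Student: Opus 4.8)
The plan is to deduce Corollary \ref{cor:suffG} entirely from Proposition \ref{prop:charf} together with Theorem \ref{thm:main}: for each of the two conditions, and for each pair of vertices $v,w$, I will exhibit a permutation $\sigma$ of $V(G)$ witnessing the hypotheses of Proposition \ref{prop:charf}, conclude that $f_G^{vw}(s,t)\le 0$ for all $s,t$, and then quote Theorem \ref{thm:main} to get convergence of $T^F_{G^n}(\bar v,\bar w)$ to $t^*_G(v,w)/\rho$ in probability, and in $L^1$ when $F\in\mathcal{C}_{L^1}(\rho)$. As a preliminary I would note that connectedness of $G$ forces $t^*_G(v,w)<\infty$ for every pair $v,w$: for $v\neq w$ the generating function $m_G(v,w,\cdot)$ is continuous, vanishes at $0$, and tends to $\infty$ (it dominates $t^{\abs{\gamma}}/\abs{\gamma}!$ for any path $\gamma$ from $v$ to $w$), hence takes the value $1$; and $t^*_G(v,w)=0$ when $v=w$. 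So Proposition \ref{prop:charf} applies as soon as a suitable $\sigma$ is produced.

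Condition (ii) is essentially immediate. Given $v,w$, the hypothesis $(v,w)\cong(w,v)$ provides an automorphism $\varphi$ of $G$ with $\varphi(v)=w$ and $\varphi(w)=v$; take $\sigma=\varphi$. Applying $\varphi$ to the configurations $(v,x)$ and $(x,w)$ gives $(v,x)\cong(w,\sigma(x))$ and $(x,w)\cong(\sigma(x),v)$, and since $\cong$ is transitive, composing with the automorphisms furnished by $(w,\sigma(x))\cong(\sigma(x),w)$ and $(\sigma(x),v)\cong(v,\sigma(x))$ (the hypothesis of (ii) again) yields exactly the two isomorphisms required in Proposition \ref{prop:charf}.

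Condition (i) needs a brief computation in $\mathcal{G}$. Identifying $V(G)$ with $\mathcal{G}$, I would first check that normality of $S$ makes every affine map $g\mapsto agb$ with $a,b\in\mathcal{G}$ a graph automorphism, preserving edge orientations: an $S$-edge $g\to gs$ is sent to $agb\to agsb=(agb)(b^{-1}sb)$ with $b^{-1}sb\in S$ (and similarly for left translation), so the argument applies to directed Cayley graphs as well. Consequently $(v_1,v_2)\cong(w_1,w_2)$ whenever $v_2v_1^{-1}$ is conjugate to $w_2w_1^{-1}$ in $\mathcal{G}$. Now set $\sigma(x)=v\,x^{-1}\,w$, which is a bijection of $\mathcal{G}$. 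Since $w\,\sigma(x)^{-1}=xv^{-1}$, the first condition $(v,x)\cong(\sigma(x),w)$ holds outright (in fact via the single right translation $g\mapsto g\,x^{-1}w$), and for the second we have $\sigma(x)v^{-1}=v(x^{-1}w)v^{-1}$, which is conjugate to $x^{-1}w$, which in turn is conjugate to $wx^{-1}$; hence $(x,w)\cong(v,\sigma(x))$. Proposition \ref{prop:charf} then gives $f_G^{vw}\le 0$. Finally, a finitely generated abelian group admits a finite generating set, every subset of an abelian group is normal, and Cayley graphs of abelian groups have bounded degree, so (i) indeed subsumes the abelian case; combining all of this with Theorem \ref{thm:main} finishes the proof.

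The routine calculations aside, the only point demanding care is the choice of $\sigma$ in (i): the two conjugacy requirements must hold \emph{simultaneously} for every $x$, and it is not a priori clear that one permutation can achieve both. The choice $\sigma(x)=vx^{-1}w$ works precisely because it reduces the first requirement to an equality of the relevant group elements and collapses the second to the elementary fact that $wx^{-1}$ and $x^{-1}w$ are always conjugate; verifying that this is consistent with the affine-automorphism criterion derived from normality of $S$ is the crux of the argument.
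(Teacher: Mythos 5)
Your proposal is correct and follows essentially the same route as the paper: the same choice of $\sigma$ in both cases ($\sigma(x)=vx^{-1}w$ for (i), an automorphism swapping $v$ and $w$ for (ii)), combined with Proposition \ref{prop:charf} and Theorem \ref{thm:main}. The only cosmetic difference is that for the second isomorphism in (i) you pass through a conjugacy-class criterion ($\sigma(x)v^{-1}$ conjugate to $wx^{-1}$), whereas the paper exhibits the left translation $g\mapsto vx^{-1}g$ directly; both are valid since normality of $S$ makes left and right translations graph automorphisms.
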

\begin{proof}
For $i)$, take $\sigma(x) = v x^{-1} w$. Observe that multiplication by a group element from the left and right both induce graph automorphisms. Hence, $(v, x) \cong (vx^{-1} w, x x^{-1} w) = (\sigma(x), w)$ and $(x, w) \cong (v x^{-1} x, vx^{-1}w) = (v, \sigma(x))$. For $ii)$ we can take $\sigma$ equal to an automorphism that swaps $v$ and $w$. Then, $(v, x) \cong (\sigma(v), \sigma(x)) = (w, \sigma(x))\cong (\sigma(x), w)$ and $(x, w) \cong (\sigma(x), \sigma(w)) = (\sigma(x), v)\cong (v, \sigma(x))$.
\end{proof}

%\vspace{.5cm}

\subsection{Application: The high-dimensional integer lattice}\label{sec:highdim}
Probably the most important case of a Cartesian power graph the $n$-dimensional integer lattice itself. This can be seen to be the $n$:th Cartesian power of $\mathbb{Z}$ with nearest neighbour graph structure. Note that the Cartesian power $\mathbb{Z}^n$ coincides with the usual meaning of this notation.

Studies of first-passage percolation on the high-dimensional integer lattice have focused on estimating the so-called time constant along a coordinate axis, and along a diagonal. Let $F$ be a non-negative distribution with finite expectation. It is well-known, see for instance Section 2.2 of \cite{BlairStahn}, that for any such $F$ and any $n\geq 1$ there exist constants $\mu(F, n)$ and $\mu^*(F, n)$ such that
\begin{align}
\mu(F, n) &= \lim_{k\rightarrow\infty} \frac{T_{\mathbb{Z}^n}(\bar{0}, k \vec{e_1} )}{k},\\
\mu^*(F, n) &= \lim_{k\rightarrow\infty} \frac{T_{\mathbb{Z}^n}(\bar{0}, \bar{k})}{k},
\end{align}
almost surely and in $L^1$, where $\vec{e_1} = (1, 0, 0,\dots, 0)$.% It is a well-known fact that, for each $n$, these limits exist almost surely and in $L^1$ and are non-random. Furthermore, by convexity of the limit shape, we would obtain the same limits if we replace the vertices $k\vec{e_1}$ and $\bar{k}$ with the entire hyperplanes $\{x\in \mathbb{Z}^n: x_1=k\}$ and $\{x \in \mathbb{Z}^n: \sum_i x_i = kn\}$ respectively. We will refer to $\mu_n$ as the time constant along an axis, and $\mu^*_n$ as the diagonal time constant. 

These two quantities were first considered by Kesten (Chapter 8 of \cite{Kesten}, see also \cite{CD83}), who showed that for a large class of distributions $F$ with density $\rho$ near zero, including exponential (where $\rho=1$), we have $\mu(F, n) = \Theta\left(\ln n/(\rho\cdot n) \right)$ and $\mu^*(F, n)=\Theta(1/\rho)$. As a consequence of this, the limit shape is not the Euclidean ball for any such distribution when $n$ is sufficiently large.

In the case of $\Exp(1)$ passage times, the estimate for the time constant along an axis was improved by Dhar \cite{Dhar88}, who showed that
\begin{equation}
\mu(\Exp(1), n) \sim \frac{\ln n}{2 n}\text{ as }n\rightarrow\infty.
\end{equation}
In another, seemingly less known, paper \cite{Dhar86}, Dhar gives a lower bound on the diagonal time constant. This was more recently rediscovered by Couronn\'e, Enriquez and Gerin \cite{CEG11}. For any dimension $n\geq 1$, we have
\begin{equation}
\mu^*(\Exp(1), n) \geq \frac{1}{2} \sqrt{ \alpha_*^2-1 }\approx 0.3313\dots
\end{equation}
where $\alpha_*$ is the unique positive solution to $\coth \alpha = \alpha$.

In a recent paper by Auffinger and Tang \cite{AT16} this was generalized to other distributions. Suppose $F\in\mathcal{C}(\rho)$ for some $\rho \in [0, \infty]$ exists. Then
\begin{equation}
\lim_{n\rightarrow\infty} \frac{n\,\mu(F, n)}{\ln n} = \frac{1}{2\rho},
\end{equation}
and
\begin{equation}\label{eq:ATdiag}
\liminf_{n\rightarrow\infty} \mu^*(F, n)  \geq \frac{1}{2\rho} \sqrt{ \alpha_*^2-1 },
\end{equation}
where $\rho=0$ and $\rho=\infty$ correspond to that the limits are $\infty$ and $0$ respectively. We remark that these results are stated with the technical condition that $F(x)/x = \rho + O\left(1/\abs{\ln x}\right)$ for some neighbourhood $x\in[0, \varepsilon_0]$, but this can be overcome by stochastically sandwiching $F$ between distributions $F_1$ and $F_2$ with constant density $\rho+\varepsilon$ and $\rho-\varepsilon$ respectively near $0$.

We now apply ideas from this paper to show that the bound on the diagonal time constant above is sharp asymptotically in high dimension. By considering the number of paths from $0$ to $k$ in $\mathbb{Z}$ that take precisely $i$ steps to the left, we see that the generating function equals
\begin{equation}\label{eq:mZ}
m_{\mathbb{Z}}(0, k, t) = \sum_{i=0}^\infty {k+2i \choose i} \frac{ t^{k+2i}}{(k+2i)!} = \sum_{i=0}^\infty \frac{t^{k+2i}}{i!(k+i)!}.
\end{equation}
Incidentally, this is equal to $I_k(2t)$, where $I_\alpha(x)$ is the modified Bessel function of the first kind. By Corollary \ref{cor:suffG} we have that for any $F\in\mathcal{C}_{L^1}(\rho)$ where $\rho\in(0, \infty)$, $T^F_{\mathbb{Z}^n}(\bar{0}, \bar{k}) \rightarrow t^*_\mathbb{Z}(0, k)/\rho$ in probability and $L^1$ as $n\rightarrow\infty$.

\begin{theorem}\label{thm:diagmu} Let $F\in \mathcal{C}_{L^1}(\rho)$ where $\rho\in(0, \infty)$. Then
\begin{equation*}
\lim_{n\rightarrow\infty} \mu^*(F, n) = \frac{1}{2\rho} \sqrt{\alpha_*^2-1}.
\end{equation*}
\end{theorem}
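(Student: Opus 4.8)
The plan is to compare $\mu^*(F,n)$ with the critical times $t^*_k:=t^*_{\mathbb{Z}}(0,k)$, i.e.\ the unique positive roots of $m_{\mathbb{Z}}(0,k,t)=1$. First I would record that $k\mapsto t^*_k$ is subadditive: for fixed $n$, concatenating near-optimal paths and using translation invariance of $\mathbb{Z}^n$ shows that $k\mapsto\mathbb{E}[T^{\Exp(1)}_{\mathbb{Z}^n}(\bar{0},\bar{k})]$ is subadditive, and letting $n\to\infty$ via the $L^1$ part of Corollary~\ref{cor:suffG} (with $\rho=1$), which identifies $t^*_k=\lim_n\mathbb{E}[T^{\Exp(1)}_{\mathbb{Z}^n}(\bar{0},\bar{k})]$, transfers subadditivity to $t^*_k$. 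By Fekete's lemma $\tau^*:=\lim_k t^*_k/k=\inf_k t^*_k/k$ exists and satisfies $\tau^*\le t^*_k/k\le t^*_1$ for all $k$. The theorem then reduces to two statements: $\tau^*=\tfrac12\sqrt{\alpha_*^2-1}$, and $\mu^*(F,n)\to\tau^*/\rho$.

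To identify $\tau^*$ I would use $m_{\mathbb{Z}}(0,k,t)=I_k(2t)$ together with the classical asymptotics $I_\nu(\nu z)\sim e^{\nu\eta(z)}/(\sqrt{2\pi\nu}\,(1+z^2)^{1/4})$ as $\nu\to\infty$, where $\eta(z)=\sqrt{1+z^2}+\ln\frac{z}{1+\sqrt{1+z^2}}$. The function $\eta$ is a continuous strictly increasing bijection $(0,\infty)\to\mathbb{R}$; let $z^*$ be its unique zero. For fixed $z<z^*$ one has $\eta(z)<0$, so $m_{\mathbb{Z}}(0,k,kz/2)=I_k(kz)\to0$ and hence $t^*_k>kz/2$ for $k$ large; for fixed $z>z^*$, $I_k(kz)\to\infty$ and $t^*_k<kz/2$ for $k$ large. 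Thus $\tau^*=z^*/2$. Solving $\eta(z^*)=0$ after the substitution $\alpha=\sqrt{1+(z^*)^2}$ (so $z^*=\sqrt{\alpha^2-1}$) gives $\alpha+\tfrac12\ln\frac{\alpha-1}{\alpha+1}=0$, i.e.\ $\coth\alpha=\alpha$, hence $\alpha=\alpha_*$ and $\tau^*=\tfrac12\sqrt{\alpha_*^2-1}$.

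For $\mu^*(F,n)\to\tau^*/\rho$ I would first note $\mu^*(F,n)=\inf_k\mathbb{E}[T^F_{\mathbb{Z}^n}(\bar{0},\bar{k})]/k$ (subadditivity of $k\mapsto\mathbb{E}[T^F_{\mathbb{Z}^n}(\bar{0},\bar{k})]$ together with the $L^1$ limit in the definition of $\mu^*$). The upper bound is then immediate: for each fixed $k$, Corollary~\ref{cor:suffG} gives $\mathbb{E}[T^F_{\mathbb{Z}^n}(\bar{0},\bar{k})]\to t^*_k/\rho$, so $\limsup_n\mu^*(F,n)\le t^*_k/(\rho k)$, and the infimum over $k$ yields $\limsup_n\mu^*(F,n)\le\tau^*/\rho$. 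For the lower bound, fix $\epsilon,\delta>0$, set $c=\rho+\epsilon$, and bound $\mathbb{P}(T^F_{\mathbb{Z}^n}(\bar{0},\bar{k})\le t)$ by the union bound over self-avoiding paths (cf.\ \eqref{eq:firstmoment}); for a path of length $\ell$, bound the probability that a sum $S_\ell$ of $\ell$ independent $F$-distributed variables is $\le t$ by the Chernoff inequality $\mathbb{P}(S_\ell\le t)\le e^{\lambda t}\mathbb{E}[e^{-\lambda Y}]^\ell$ with $\lambda=\ell/t$. Since $F\in\mathcal{C}(\rho)$ implies $\lambda\,\mathbb{E}[e^{-\lambda Y}]\to\rho$ as $\lambda\to\infty$, there is $\lambda_1$ with $\mathbb{E}[e^{-\lambda Y}]\le c/\lambda$ for $\lambda\ge\lambda_1$, so $\mathbb{P}(S_\ell\le t)\le(ect/\ell)^\ell$ whenever $\ell/t\ge\lambda_1$. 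Combining this with the bound $N^{\mathbb{Z}^n}_\ell(\bar{0},\bar{k})\le\ell!\,(t^*_k)^{-\ell}$ (which follows at once from $\sum_\ell N^{\mathbb{Z}^n}_\ell(\bar{0},\bar{k})(t^*_k)^\ell/\ell!=m_{\mathbb{Z}}(0,k,t^*_k)^n=1$, $N^H_\ell(v,w)$ denoting the number of walks of length $\ell$ from $v$ to $w$), with $N^{\mathbb{Z}^n}_\ell(\bar{0},\bar{k})=0$ for $\ell<nk$, and with Stirling's bound $\ell!/\ell^\ell\le 3\sqrt\ell\,e^{-\ell}$, one obtains for $t=(1-\delta)t^*_k/c$ and all $n$ exceeding a threshold depending only on $\lambda_1$, $t^*_1$, $c$ (not on $k$, since $\ell/t\ge nk/t\ge nc/t^*_1\ge\lambda_1$):
\begin{equation*}
\mathbb{P}\!\left(T^F_{\mathbb{Z}^n}(\bar{0},\bar{k})\le\frac{(1-\delta)t^*_k}{c}\right)\ \le\ \sum_{\ell\ge nk}\frac{\ell!}{\ell^\ell}\big(e(1-\delta)\big)^\ell\ \le\ \frac{3}{\delta^2}\,nk\,(1-\delta)^{nk},
\end{equation*}
which tends to $0$ as $n\to\infty$ uniformly in $k\ge1$. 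As $T^F_{\mathbb{Z}^n}(\bar{0},\bar{k})\ge0$, this forces $\mathbb{E}[T^F_{\mathbb{Z}^n}(\bar{0},\bar{k})]/k\ge(1-\delta)(t^*_k/k)/c\cdot(1-o(1))\ge(1-\delta)(\tau^*/c)(1-o(1))$ with $o(1)$ uniform in $k$, hence $\mu^*(F,n)\ge(1-\delta)(\tau^*/c)(1-o(1))$; letting $n\to\infty$ and then $\delta,\epsilon\to0$ gives $\liminf_n\mu^*(F,n)\ge\tau^*/\rho$, completing the proof.

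The main obstacle is the lower bound, and within it the need to run the first-moment estimate uniformly in $k$ for a general $F\in\mathcal{C}(\rho)$: the clean inequality $\mathbb{P}(S_\ell\le t)\le(\rho t)^\ell/\ell!$ available for $\Exp(1)$ only persists for small $t$, whereas along the diagonal $t$ is of order $k$. The Chernoff substitution with $\lambda=\ell/t$ restores a usable bound, and pairing it with the walk count $N^{\mathbb{Z}^n}_\ell\le\ell!(t^*_k)^{-\ell}$ — which encodes precisely the product structure $m_{\mathbb{Z}^n}=m_{\mathbb{Z}}^{\,n}$ — is what keeps everything uniform in $k$, the uniformity being essential because $\mu^*$ is an infimum over $k$.
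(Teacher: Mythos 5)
Your argument is correct, and it diverges from the paper in one substantive way: the treatment of the lower bound. The paper's proof establishes $t^*_{\mathbb{Z}}(0,k)\sim\tfrac12\sqrt{\alpha_*^2-1}\,k$ (via Stirling applied term-by-term to the series \eqref{eq:mZ} and minimization of $g(\alpha)$ — your route through the uniform large-order asymptotics of $I_k(kz)$ and the zero of $\eta$ is the same computation in different clothing), then gets the upper bound exactly as you do from subadditivity plus Corollary \ref{cor:suffG}, but for the matching lower bound it simply invokes the Auffinger--Tang bound \eqref{eq:ATdiag} from \cite{AT16} (itself a generalization of Dhar and Couronn\'e--Enriquez--Gerin). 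You instead prove the lower bound from scratch: union bound over self-avoiding paths, the Chernoff estimate $\mathbb{P}(S_\ell\le t)\le(ect/\ell)^\ell$ valid once $\ell/t\ge\lambda_1$ (using $\lambda\mathbb{E}[e^{-\lambda Y}]\to\rho$, which is where $F\in\mathcal{C}(\rho)$ enters), and the walk-count bound $N_\ell\le\ell!\,(t^*_k)^{-\ell}$ coming from $m_{\mathbb{Z}^n}(\bar 0,\bar k,t^*_k)=1$; the resulting geometric-type tail $\tfrac{3}{\delta^2}nk(1-\delta)^{nk}$ is indeed uniform in $k\ge1$ once $n$ exceeds a threshold depending only on $\lambda_1$, $t^*_1$, $c$, which is exactly the uniformity needed because $\mu^*(F,n)=\inf_k\mathbb{E}[T^F_{\mathbb{Z}^n}(\bar 0,\bar k)]/k$. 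What your route buys is self-containedness and generality: you avoid importing \cite{AT16}, and in particular you sidestep the technical regularity condition $F(x)/x=\rho+O(1/\abs{\ln x})$ there, which the paper only removes by a sandwiching remark; the cost is roughly a page of extra estimates, and a mild extra step (transferring subadditivity to $t^*_k$ via the $L^1$ limit — this could be done more cheaply directly from Proposition \ref{prop:convolution}, since $m_{\mathbb{Z}}(0,k+l,s+t)\ge m_{\mathbb{Z}}(0,k,s)m_{\mathbb{Z}}(0,l,t)$) that the paper does not need because it proves $t^*_k/k$ converges directly.
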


\begin{proof}
We start by proving that $t^*_{\mathbb{Z}}(0, k) \sim \frac{1}{2} \sqrt{\alpha_*^2-1}\cdot k$ as $k\rightarrow\infty$. Let $\alpha_i = \frac{k+2i}{k}$. It follows by Stirling's formula that for any $i\geq 1$, we have
\begin{equation}\label{eq:besselstirling}
\frac{ t^{k+2i}}{i!(k+i)!} = \frac{ \Theta(1) }{\sqrt{i(k+i)}}  \left( \frac{ 2et/k}{g(\alpha_i)}\right)^{k+2i},
\end{equation}
where
\begin{equation}
g(\alpha) = (\alpha+1)^{\frac{1}{2}(1+\frac{1}{\alpha})} (\alpha-1)^{\frac{1}{2}(1-\frac{1}{\alpha})}=\exp\left( \ln \sqrt{\alpha^2-1} + \frac{1}{\alpha} \coth^{-1} \alpha\right).
\end{equation}
Moreover, if we continuously extend this function to $\alpha=1$ by letting $g(1)=2$, it follows that \eqref{eq:besselstirling} also holds for $i=0$.

The function $g(\alpha)$ has derivative
\begin{equation}
g'(\alpha) = g(\alpha) \frac{1}{\alpha^2} \left( \alpha - \coth^{-1} \alpha\right),
\end{equation}
and thus attains its global minimum of $e\sqrt{\alpha_*^2-1}$ at $\alpha=\alpha_*$.\footnote{Due to a small misprint in \cite{CEG11}, this point is incorrectly stated to be the global maximum rather than the global minimum. } Hence, by \eqref{eq:mZ} and \eqref{eq:besselstirling} we have that
\begin{equation}
m_{\mathbb{Z}}(0, k, ak) \rightarrow \begin{cases}
0 &\text{ if }  a<\frac{1}{2} \sqrt{\alpha_*^2-1}\\
\infty &\text{ if }  a>\frac{1}{2} \sqrt{\alpha_*^2-1},
\end{cases}
\end{equation}
as $k\rightarrow\infty$, proving $t^*_\mathbb{Z}(0, k) \sim \frac{1}{2}\sqrt{\alpha_*^2-1}\cdot k$.

%Since any path from $\bar{0}$ to $\bar{k}$ in $\mathbb{Z}^n$ has length at least $nk$, it follows that for any $\varepsilon \in (0, 1)$, we have
%\begin{equation}
%\mathbb{P}\left( T_{\mathbb{Z}^n}(\bar{0}, \bar{k}) < (1-\varepsilon)t^*_\mathbb{Z} (0, k)\right) \leq m_{\mathbb{Z}^n}\left(\bar{0}, \bar{k}, (1-\varepsilon)t^*_\mathbb{Z}(0, k) \right) \leq (1-\varepsilon)^{nk}.
%\end{equation}
%Hence, by the Borel-Cantelli lemma, for any $n$ we have
%\begin{equation}
%\mu_n^* \geq \lim_{k\rightarrow\infty} t^*(0, k)/k = \frac{1}{2} \sqrt{\alpha_*^2-1}.
%\end{equation}

For any fixed $k$, we have by subadditivity
\begin{equation}
\mu^*(F, n) \leq \mathbb{E}T_{\mathbb{Z}^n}(\bar{0}, \bar{k})/k = t^*_\mathbb{Z}(0, k)/(\rho\,k) + o(1).
\end{equation}
If we take the $\limsup$ of both sides as $n\rightarrow\infty$, and then the limit as $k\rightarrow\infty$, it follows that
\begin{equation}
\limsup_{n\rightarrow\infty} \mu^*(F, n) \leq \lim_{k\rightarrow\infty} t^*_\mathbb{Z}(0, k)/(\rho\,k) = \frac{1}{2\rho} \sqrt{\alpha_*^2-1}.
\end{equation}
The corresponding lower bound is already given by \eqref{eq:ATdiag}.
\end{proof}

\begin{remark}
Cox and Durrett \cite{CD83} considered an oriented version of the diagonal time constant where the base graph $\mathbb{Z}$ is replaced by the doubly infinite directed chain $\vec{\mathbb{Z}}$. They prove that for $\Exp(1)$ weights this time constant converges to a value $\gamma\in [e^{-1}, 2^{-1}]$ as the dimension tends to infinity, and conjecture that the limit is $e^{-1}$. The author has been unable to find any further work on this problem. To end this subsection, we note that one can prove $\gamma = e^{-1}$ in the same way as Theorem \ref{thm:diagmu}. Sharpness of the critical time follows by applying Proposition \ref{prop:charf} to $f^{0k}_{\vec{\mathbb{Z}}}$ with $\sigma(x) = k-x$.
\end{remark}

\subsection{Application: The first-passage time between non-antipodal vertices in the hypercube}\label{ssec:hcube}
The absolutely simplest example of a Cartesian power graph is the $n$-dimensional directed binary hypercube. This is the graph whose vertices are the binary $n$-tuples $\{0, 1\}^n$, and where two vertices are connected by a directed edge if they differ at exactly one position, where the edge is directed towards the vertex with the extra '$1$'. This can be obtained as a power graph $P^n$ by letting $P$ be the graph on two vertices $0$ and $1$ together with a directed edge from $0$ to $1$.

In this case, we have $m_P(0, 1, t) = t$. This implies that for any $t\geq 0$,
\begin{equation}
\mathbb{P}\left( T_{P^n}(\bar{0}, \bar{1}) \leq t\right) \leq t^n,
\end{equation}
and so the critical time $t^*_P(0, 1)=1$ is an asymptotic lower bound on $T_{P^n}(\bar{0}, \bar{1})$ as $n\rightarrow\infty$. This bound was first observed by Aldous \cite{A89} and it was shown by Fill and Pemantle in \cite{FP93} that 
\begin{equation}
T_{P^n}(\bar{0}, \bar{1}) \rightarrow 1\text{ in probability as }n\rightarrow\infty
\end{equation}
by a rather technical second moment argument.

In order to strengthen this result using techniques from this paper, it suffices to observe that $f^{01}_P(s, t) = t\ln t$ which is clearly non-positive for $0\leq t \leq 1$. Hence by Theorem \ref{thm:main}, we have 
\begin{equation}\label{eq:dirhcrho}
T^F_{P^n}(\bar{0}, \bar{1}) \rightarrow 1/\rho
\end{equation}
in probability for any $F\in \mathcal{C}(\rho)$ and in $L^1$ for any $F\in \mathcal{C}_{L^1}(\rho)$ as $n\rightarrow \infty$.

As an aside, one can use this to show a constant upper bound on $T^F_{G^n}(\bar{v}, \bar{w})$ even when $f$ is positive somewhere. Let $G$ be any graph containing vertices $v, w$ such that there exists a path from $v$ to $w$, and let $F\in \mathcal{C}(\rho)$ for some $\rho\in(0, \infty)$. Then, we have
\begin{equation}
T^F_{G^n}(\bar{v}, \bar{w}) \leq \sum_{i=1}^l T^F_{G^n}(\bar{v}_{i-1}, \bar{v}_i) \leq \dist_G(v, w)/\rho + o(1),
\end{equation}
with probability tending to $1$ as $n\rightarrow\infty$, where the last inequality follows from bounding $T^F_{G^n}(\bar{v}_{i-1}, \bar{v}_i)$ by $T^F_{P^n}(\bar{0}, \bar{1})$

As a slightly more complicated example, we have the $n$-dimensional undirected binary hypercube. This is defined in the same way as above, but without assigning directions to the edges. This graph is the $n$:th Cartesian power of $K_2$, the complete graph on two vertices. Let us again denote these vertices by $0$ and $1$.

Since $K_2$ has one path from $0$ to $1$ of length $1$, one of length $3$ and so on, we get 
\begin{equation}
m_{K_2}(0, 1, t) = t + \frac{t^3}{3!}+\frac{t^5}{5!}+\dots = \sinh t.
\end{equation}
Hence, for any $t\geq 0$,
\begin{equation}
\mathbb{P}\left( T_{K_2^n}(\bar{0}, \bar{1}) \leq t\right) \leq \left(\sinh t\right)^n,
\end{equation}
and $t^*_{K_2}(0, 1) = \sinh^{-1}(1) = \ln\left(1+\sqrt{2}\right)$. This bound was first observed by Fill and Pemantle in \cite{FP93} by coupling first-passage percolation to a certain branching process, called the branching translation process, and it was shown by the author in \cite{M16} that 
\begin{equation}
T_{K_2^n}(\bar{0}, \bar{1}) \rightarrow \ln(1+\sqrt{2})
\end{equation}
in probability and $L^p$ norm for any $p\in[1, \infty)$ as $n\rightarrow\infty$ by considering the same process. Using Corollary \ref{cor:suffG} we can immediately extend this result to that 
\begin{equation}
T^{F}_{K_2^n}(\bar{0}, \bar{1})\rightarrow \ln(1+\sqrt{2})/\rho
\end{equation}
in probability for any $F\in\mathcal{C}(\rho)$ and in $L^1$ for any $F\in\mathcal{C}_{L^1}(\rho)$ as $n\rightarrow\infty$.

A generalization of this that appears not to have been studied in this setting before is the Hamming graphs. These are defined as the Cartesian powers $K_q^n$ for some $q\geq 2$, where $K_q$ is the complete graph on $q$ vertices. We denote the vertices of $K_q$ by $0, 1, \dots, q-1$. In this case we get
\begin{equation}
m_{K_q}(0, 1, t) = \frac{1}{q}\left(e^{(q-1)t}-e^{-t}\right).
\end{equation}
This can be seen by computing the matrix exponential 
\begin{equation}
e^{t(E-I)} = e^{tE}\cdot e^{-t I} = (I + E + \frac{q}{2!} E + \frac{q^2}{3!} E + \dots)\cdot e^{-t},
\end{equation}
where $E$ denotes the all ones matrix and $I$ the identity matrix each of size $q\times q$. We note that $\ln (q)/(q-1) \leq t^*_{K_q}(0, 1) \leq \ln (q+1)/(q-1)$, and in particular $t^*_{K_q}(0, 1) \sim \ln(q)/ q$ as $q\rightarrow\infty$. Again, by Corollary \ref{cor:suffG}, $T^{F}_{K_q^n}(\bar{0}, \bar{1})\rightarrow t^*_{K_q}(0, 1)/\rho$ in probability for $F\in \mathcal{C}(\rho)$ and in $L^1$ for $F\in \mathcal{C}_{L^1}(\rho)$ as $n\rightarrow\infty$.

So far we have only considered the first-passage time between diagonal vertices $\bar{v}$ and $\bar{w}$. We will now show a large $n$ limit for the first-passage time between two general vertices in the case of the undirected hypercube. By symmetry of the hypercube, the distribution of the first-passage time between two vertices depends only on their Hamming distance.  Hence, for any $0 \leq k \leq n$, we let $T^F(n, k)$ denote the first-passage time between two vertices at Hamming distance $k$ in $K_2^n$ with respect to distribution $F$. As before, we let $T(n, k) = T^{\Exp(1)}(n, k)$. As we noted above, we have $m_{K_2}(0, 1, t) = m_{K_2}(1, 0, t)=\sinh t$. In the same way we can see that $m_{K_2}(0, 0, t) = m_{K_2}(1, 1, t) = \cosh t$. Hence, by \eqref{eq:factorfirstmoment}, we have
\begin{equation}\label{eq:nonantipod}
\mathbb{P}\left( T(n, k) < t\right) \leq \left( \sinh t \right)^k \left( \cosh t\right)^{n-k},
\end{equation}
for any $t\geq 0$.

For any $0 \leq x \leq 1$, let $\vartheta(x)$ denote the non-negative solution to
\begin{equation}
\left( \sinh \vartheta \right)^{x} \left( \cosh \vartheta\right)^{1-x} = 1.
\end{equation}
It can be seen that $\vartheta(x)$ is continuous and increasing with respect to $x$. We further have $\vartheta(0)=0$, %$\vartheta(\frac{1}{2}) = \frac{1}{2}\sinh^{-1} 2 = \frac{1}{2}\ln(2+\sqrt{5})$,
 and $\vartheta(1)=\sinh^{-1}1 = \ln(1+\sqrt{2})$.

\begin{theorem}\label{thm:nonantipod} Let $\rho\in(0, \infty)$. For any sequence of integers $\{k_n\}_{n=1}^\infty$ where $0\leq k_n \leq n$ we have that
$$T^F(n, k_n)-\frac{1}{\rho} \vartheta\left(\frac{k_n}{n}\right) \rightarrow 0$$
in probability for any $F\in \mathcal{C}(\rho)$ and in $L^1$ for any $F\in \mathcal{C}_{L^1}(\rho)$ as $n\rightarrow\infty$.
\end{theorem}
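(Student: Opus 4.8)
The plan is to prove the result first for $F=\Exp(1)$ and then transfer it to general $F\in\mathcal{C}(\rho)$ via Theorem~\ref{thm:gendist}. Since the target $\vartheta(k_n/n)$ need not converge, I would run both steps through a subsequence argument: a real sequence tends to $0$ iff every subsequence has a further subsequence tending to $0$, and along any subsequence we may assume $k_n/n\to x$ for some $x\in[0,1]$, so by continuity of $\vartheta$ we get $\vartheta(k_n/n)\to\vartheta(x)$. Thus everything reduces to the claim: \emph{if $k_n/n\to x\in[0,1]$ then $T(n,k_n)\to\vartheta(x)$ in probability and in $L^1$}; Theorem~\ref{thm:gendist}, whose hypotheses allow the non-uniformly-bounded degree sequence $H_n=K_2^n$, then upgrades this to $T^F(n,k_n)\to\vartheta(x)/\rho$ for all $F\in\mathcal{C}(\rho)$, and in $L^1$ for $F\in\mathcal{C}_{L^1}(\rho)$, after which the subsequence reduction (using that the shift $\vartheta(k_n/n)/\rho\to\vartheta(x)/\rho$) finishes the proof.

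For the lower bound in the claim I would use the union bound \eqref{eq:nonantipod}: writing $\psi_y(t)=(\sinh t)^y(\cosh t)^{1-y}$, it says $\mathbb{P}(T(n,k_n)<t)\le\psi_{k_n/n}(t)^n$, and since $\psi_x$ is strictly increasing on $(0,\infty)$ with $\psi_x(\vartheta(x))=1$, for fixed $t<\vartheta(x)$ we have $\psi_{k_n/n}(t)\to\psi_x(t)<1$, making the bound exponentially small; hence $\mathbb{P}(T(n,k_n)<\vartheta(x)-\varepsilon)\to0$. For the upper bound when $x\in(0,1)$, pick a rational $x'=p/q\in(x,1)$ with $\vartheta(x')<\vartheta(x)+\varepsilon$. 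For large $n$ (so that $k_n<x'n$) set $m=\lfloor k_n/p\rfloor$; then $n=qm+N$ and $k_n=pm+j$ with $m\to\infty$, $N\to\infty$ and $0\le j<p$. Identifying $K_2^n$ with $(K_2^q)^m\square K_2^N$, the two distance-$k_n$ vertices become $(\bar{v^*},a)$ and $(\bar{w^*},b)$, where $v^*,w^*\in K_2^q$ are at Hamming distance $p$, $a\in K_2^N$ has $j$ ones and $b\in K_2^N$ has none; routing through $(\bar{v^*},b)$ and noting the two coordinate blocks carry disjoint sets of edges gives $T(n,k_n)\le T_{(K_2^q)^m}(\bar{v^*},\bar{w^*})+T_{K_2^N}(a,b)$ as a sum of \emph{independent} random variables. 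Since $K_2^q$ is the Cayley graph of $\mathbb{Z}_2^q$, Corollary~\ref{cor:suffG}(i) gives $T_{(K_2^q)^m}(\bar{v^*},\bar{w^*})\to t^*_{K_2^q}(v^*,w^*)$ in probability and $L^1$, and by Proposition~\ref{prop:multiplicativity} together with $m_{K_2}(0,1,t)=\sinh t$, $m_{K_2}(0,0,t)=\cosh t$ this critical time solves $(\sinh t)^p(\cosh t)^{q-p}=1$, i.e.\ equals $\vartheta(p/q)=\vartheta(x')$. The remainder $T_{K_2^N}(a,b)=T(N,j)$ is controlled by a base-case lemma: \emph{for fixed $j$, $T(N,j)\to0$ in probability and in $L^1$ as $N\to\infty$}; this I would prove by flipping the $j$ required coordinates one at a time, realising the $i$-th flip through one of $\Theta(N/j)$ mutually edge-disjoint length-$3$ detours into a private block of auxiliary coordinates (or the direct edge), so that the total time is dominated by a sum of $j$ independent minima of $\Theta(N/j)$ i.i.d.\ $\operatorname{Gamma}(3,1)$ variables, each tending to $0$. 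Combining, $\limsup_n\mathbb{P}(T(n,k_n)>\vartheta(x)+2\varepsilon)=0$, which with the lower bound yields $T(n,k_n)\to\vartheta(x)$ in probability.

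The two extreme cases need separate handling. For $x=0$: along a subsequence on which $k_n$ is bounded this is the base-case lemma directly, and otherwise the block argument above applies with $m\to\infty$; letting the rational $x'$ decrease to $0$ and using $\vartheta(x')\to0$ gives $T(n,k_n)\to0$. For $x=1$ we have $n-k_n=o(n)$, so by the triangle inequality through the antipode of the target, $T(n,k_n)\le T(n,n)+T(n,n-k_n)$, where $T(n,n)\to\vartheta(1)$ by Corollary~\ref{cor:suffG} (and \cite{M16}) and $T(n,n-k_n)\to0$ by the $x=0$ case; with the union-bound lower bound this gives $T(n,k_n)\to\vartheta(1)$. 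For the $L^1$ statement: every upper bound above is also a bound in expectation (using the $L^1$ parts of Corollary~\ref{cor:suffG} and of the base-case lemma, and that there are only finitely many possible remainders $j<p$), so $\limsup_n\mathbb{E}\,T(n,k_n)\le\vartheta(x)$; combined with convergence in probability to the constant $\vartheta(x)$ and with $T\ge0$, this forces $\mathbb{E}\,T(n,k_n)\to\vartheta(x)$ and hence $T(n,k_n)\to\vartheta(x)$ in $L^1$.

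I expect the main obstacle to be the bookkeeping in the rational block decomposition --- choosing $x'=p/q$, $m$, $N$ and $j$ so that simultaneously $m\to\infty$, $N\to\infty$, $j$ stays bounded, $qm\le n$ and $pm\le k_n$ --- together with the base-case lemma $T(N,j)\to0$, which is the one place where the high-dimensional abundance of short paths has to be exploited by hand rather than quoted. The subsequence reductions and the $L^1$ upgrade are then routine.
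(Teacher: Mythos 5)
Your proposal is correct and follows essentially the same route as the paper: reduce to $\Exp(1)$ weights via Theorem \ref{thm:gendist} and a subsequence argument with $k_n/n\to x$, obtain the lower bound from \eqref{eq:nonantipod}, and obtain the matching upper bound (in expectation) by approximating $x$ from above by a rational $p/q$ and applying Corollary \ref{cor:suffG} to the base graph $K_2^q$, with the bounded remainder absorbed by the $\mathbb{E}T(N,1)=O(N^{-1/3})$ estimate. The only cosmetic difference is that the paper replaces your explicit $(K_2^q)^m\square K_2^N$ decomposition and edge-disjoint routing by subadditivity together with the monotonicity of $T(n,k)$ in $n$ (Lemma \ref{lemma:nonantipodobs} ii)), which also covers the case $x=1$ uniformly, without your separate detour through the antipodal vertex.
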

We remark that similar statements for related processes have been shown in \cites{M15,L15}.

\begin{lemma}\label{lemma:nonantipodobs} We have the following:
\begin{enumerate}[label=\roman*)]
\item $\mathbb{E} T(n, 1) = O(n^{-1/3})$.
\item $T(n, k)$ is stochastically decreasing in $n$.
\item For any integers $a>0$ and $0\leq b \leq a$, we have $T(an, bn) \rightarrow \vartheta\left(\frac{b}{a}\right)$ in probability and $L^1$-norm as $n\rightarrow\infty.$
\end{enumerate}
\end{lemma}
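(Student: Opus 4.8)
The plan is to treat the three parts by quite different means, since they are of very different depth. For part (i), the bound should come from exhibiting $\Theta(n)$ short, pairwise edge--disjoint detours between a vertex $u$ and a fixed neighbour $w=u\oplus e_1$ of $K_2^n$. Concretely, for each coordinate $i\in\{2,\dots,n\}$ I would take the length--$3$ path $P_i$ that flips coordinate $i$, then coordinate $1$, then coordinate $i$ back; a short check shows the $3(n-1)$ edges involved are all distinct, so $T(P_2),\dots,T(P_n)$ are independent, each a sum of three $\Exp(1)$ variables, and $T(n,1)\le\min_i T(P_i)$. The lower bound of Proposition~\ref{prop:sumofexp1} then gives $\mathbb{P}(\min_i T(P_i)>t)\le(1-e^{-t}t^3/6)^{n-1}$, and integrating this over $t$ yields $\mathbb{E}[\min_i T(P_i)]=O(n^{-1/3})$: the range $t\le 1$ contributes at most $\int_0^1 e^{-(n-1)t^3/(6e)}\,dt=O(n^{-1/3})$ after rescaling $t\mapsto(n-1)^{1/3}t$, while the range $t>1$ contributes at most $\mathbb{E}[T(P_2)]\,\mathbb{P}(\min_{i\ge 3}T(P_i)>1)$, using the edge--disjointness of $P_2$ from $P_3,\dots,P_n$, and this is exponentially small in $n$.

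For part (ii) I would use a monotone coupling. Realise $K_2^n$ as the sub--hypercube of $K_2^{n+1}$ on which the last coordinate equals $0$, place i.i.d.\ $\Exp(1)$ weights on $K_2^{n+1}$, and observe that their restriction gives i.i.d.\ $\Exp(1)$ weights on $K_2^n$. A pair of vertices at Hamming distance $k$ in $K_2^n$ is still at distance $k$ in $K_2^{n+1}$, and since every path of the smaller cube is also a path of the larger one, the first--passage time in $K_2^{n+1}$ is pointwise at most that in $K_2^n$ for this pair. By the symmetry of the hypercube the two sides are distributed as $T(n+1,k)$ and $T(n,k)$, which gives the claimed stochastic domination.

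For part (iii) the key observation is that $K_2^{an}$ is isomorphic, as a graph, to the $n$--th Cartesian power $(K_2^a)^n$, and under this isomorphism a pair of vertices at Hamming distance $bn$ may be taken to be $\bar v,\bar w$ for $v,w\in K_2^a$ at Hamming distance $b$ (group the $an$ coordinates into $n$ blocks of size $a$ and let $v$ and $w$ differ on the first $b$ coordinates of each block). Since $K_2^a$ is a Cayley graph of the finitely generated abelian group $(\mathbb Z/2\mathbb Z)^a$, Corollary~\ref{cor:suffG}(i) applies with $F=\Exp(1)\in\mathcal C_{L^1}(1)$ and gives $T(an,bn)=T_{(K_2^a)^n}(\bar v,\bar w)\to t^*_{K_2^a}(v,w)$ in probability and in $L^1$. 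It then remains to identify the critical time: by Proposition~\ref{prop:multiplicativity} together with $m_{K_2}(0,1,t)=\sinh t$ and $m_{K_2}(0,0,t)=\cosh t$ one has $m_{K_2^a}(v,w,t)=(\sinh t)^b(\cosh t)^{a-b}$, so $t^*_{K_2^a}(v,w)$ is the solution of $(\sinh t)^{b/a}(\cosh t)^{1-b/a}=1$, which is $\vartheta(b/a)$ by definition.

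I expect the only genuinely delicate point to be part (i), and within it the control of the upper tail of $\min_i T(P_i)$; the lower tail, which is what produces the $n^{-1/3}$ order, is immediate from Proposition~\ref{prop:sumofexp1}. Parts (ii) and (iii) should reduce to routine verifications once the coupling is set up and, for (iii), once one recognises that the non--antipodal problem on the cube is exactly the diagonal problem on the power graph $(K_2^a)^n$, to which Corollary~\ref{cor:suffG} already applies.
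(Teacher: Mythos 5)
Your proposal is correct and follows essentially the same route as the paper: edge-disjoint short detours giving a minimum of independent $\Gamma(3,1)$ variables for (i), the subgraph coupling $K_2^n\subset K_2^{n+1}$ for (ii), and viewing $K_2^{an}\cong(K_2^a)^n$ with $\dist_{K_2^a}(v,w)=b$ and invoking Corollary \ref{cor:suffG} for (iii). You merely spell out details the paper leaves implicit (the $n^{-1/3}$ integration and the identification $t^*_{K_2^a}(v,w)=\vartheta(b/a)$), and these are carried out correctly.
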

\begin{proof}
$i)$ For vertices $v$ and $w$ at distance one in $K_2^n$ there is a collection of $n$ edge disjoint paths between $v$ and $w$ of length at most three. Hence $T(n, 1)$ is stochastically smaller than the minimum of $n$ independent $\Gamma(3, 1)$ variables. $ii)$ Consider $K_2^n$ as a subgraph of $K_2^{n+1}$. Then first-passage times in $K_2^n$ dominate those in $K_2^{n+1}$. $iii)$ This follows from Corollary \ref{cor:suffG} by using $K_n^a$ as the base graph and choosing $v$ and $w$ such that $\dist_{K_2^a}(v, w)=b$.
\end{proof}

\begin{proof}[Proof of Theorem \ref{thm:nonantipod}.]
Assume the statement is false for some $F$ and some sequence $\{k_n\}_{n=1}^\infty$. Then there exists a subsequence $\{k_{n_i}\}_{i=1}^\infty$ such that
\begin{equation}
T^F(n_i, k_{n_i}) - \frac{1}{\rho} \vartheta\left(\frac{k_{n_i}}{n_i}\right)
\end{equation}
does not tend to $0$ in the appropriate sense, $\frac{k_{n_i}}{n_i}\rightarrow r\in [0, 1]$ and $k_{n_i}$ is either bounded or tending to infinity. In order to prove that this cannot happen, it suffices by Theorem \ref{thm:gendist} to prove that, for any such sequence,
\begin{equation}
\mathbb{E}\abs{T(n_i, k_{n_i} ) - \vartheta(r) }\rightarrow 0 \text{ as }i\rightarrow\infty,
\end{equation}
with standard exponential passage times.

Assume $r>0$. For any $0< t < \vartheta(r)$, we have by \eqref{eq:nonantipod}
\begin{align*}
&\mathbb{P}\left( T(n_i, k_{n_i}) \leq t \right) \leq \left( \sinh(t)^{k_{n_i}/n_i} \cosh(t)^{1-k_{n_i}/n_i}\right)^{n_i}.
\end{align*}
As $i\rightarrow\infty$, the base in the right-hand side tends to $\sinh(t)^r \cosh(t)^{1-r} < 1$, hence the right-hand side tends to $0$. As this holds for $t$ arbitrarily close to $\vartheta(r)$, we conclude that
\begin{equation}\label{eq:Tnkneg}
\mathbb{P}\left( T(n_i, k_{n_i}) \geq \vartheta(r)-o(1) \right) \rightarrow 1 \text{ as }i\rightarrow\infty.
\end{equation}
Note that as $T(n_i, k_{n_i})$ is non-negative, this holds trivially for $r=0$.

Let $a>0$ by a large integer and let $b$ be the smallest integer such that $\frac{b}{a} \geq \frac{k_{n_i}}{n_i}$ for all but finitely many $i$. Note that $b\leq a$. Then, by subadditivity we have
$$\mathbb{E}T(n_i, k_{n_i}) \leq \mathbb{E}T\left(n_i, b\lfloor \frac{k_{n_i}}{b}\rfloor\right) + \mathbb{E}T\left(n_i, b\left( \frac{k_{n_i}}{b}-\lfloor \frac{k_{n_i}}{b}\rfloor\right)\right).$$
As $0 \leq b\left(\frac{k_{n_i}}{b}-\lfloor \frac{k_{n_i}}{b}\rfloor\right) \leq b$, it follows by Lemma \ref{lemma:nonantipodobs} $i)$ that the second term in the right-hand side is at most $b \mathbb{E}T(n_i, 1) = O(a\,n_i^{-1/3})$ which tends to $0$ as $i\rightarrow\infty$. By the choice of $b$, we know that $a\lfloor\frac{k_{n_i}}{b}\rfloor \leq n_i$ for $i$ sufficiently large, hence by  Lemma \ref{lemma:nonantipodobs} $ii)$,
$$\mathbb{E}T\left(n_i, b\lfloor \frac{k_{n_i}}{b}\rfloor\right) \leq \mathbb{E}T\left(a\lfloor \frac{k_{n_i}}{b}\rfloor, b\lfloor \frac{k_{n_i}}{b}\rfloor\right).$$
Assuming $k_{n_i}\rightarrow\infty$ as $i\rightarrow\infty$, Lemma \ref{lemma:nonantipodobs} $iii)$ implies that the right-hand side tends to $\vartheta\left(\frac{b}{a}\right)$ as $i\rightarrow\infty$. As $\frac{b}{a}$ can be made arbitrarily close to $r$ by taking $a$ sufficiently large, we conclude that
\begin{equation}\label{eq:Tnkexp}
\limsup\limits_{i\rightarrow\infty} \mathbb{E}T(n_i, k_{n_i}) \leq \vartheta(r),
\end{equation}
Note that by Lemma \ref{lemma:nonantipodobs} $i)$ this also holds for $k_{n_i}$ bounded.

By combining \eqref{eq:Tnkneg} and \eqref{eq:Tnkexp}, we conclude that $T(n_i, k_{n_i})$ tends to $\vartheta(r)$ in $L^1$-norm, as desired.
\end{proof}

\subsection{Reading Instructions}
The remainder of the paper is structured as follows: In Section \ref{sec:basicprop}, we will show some properties of the generating function, including Propositions \ref{prop:sumofexp1} and \ref{prop:multiplicativity}, and introduce a conditioned random walk, which will be useful in later sections. The proofs of Theorem \ref{thm:main} in the case of standard exponential distribution and Proposition \ref{prop:charf} are divided between Sections \ref{sec:charf}, \ref{sec:notsharp}, and \ref{sec:sharp}, which may be read independently of each other.

In Section \ref{sec:charf} we prove Proposition \ref{prop:charf}. Section \ref{sec:notsharp} proves that the critical time is not a sharp bound on the first-passage time if $f_G^{vw}(s, t) > 0$ for some $s, t$ in its domain. The proof that $T_{G^n}(\bar{v}, \bar{w}) \rightarrow t^*$ if $f_G^{vw} \leq 0$ is given in Section \ref{sec:sharp}. Finally, Section \ref{sec:gendist} gives a self-sufficient proof of Theorem \ref{thm:gendist}, which implies that Theorem \ref{thm:main} holds for general distributions.

\section{Proofs of basic properties}\label{sec:basicprop}

\begin{proof}[Proof of Proposition \ref{prop:sumofexp1}]
We have
\begin{align*}
\mathbb{P}(S_n \leq t) &= \idotsint\limits_{t_1, \dots, t_n \geq 0} \mathbbm{1}_{t_1+\dots+t_n \leq t} e^{-t_1- \dots - t_n}\,dt_1\dots \,dt_n\\
&\leq \idotsint\limits_{t_1, \dots, t_n \geq 0} \mathbbm{1}_{t_1+\dots+t_n \leq t} \,dt_1\dots\,dt_n\\
&= \idotsint\limits_{0\leq s_1 \leq \dots \leq s_n \leq t} \,ds_1\dots\,ds_n\\
&= \frac{t^n}{n!},
\end{align*}
where, in the second last step, we used the substitution $s_i = t_1+\dots t_i$ for $1\leq i \leq n$. Similarly, we obtain the lower bound
\begin{align*}
\mathbb{P}(S_n \leq t) \geq  e^{-t} \idotsint\limits_{t_1, \dots, t_n \geq 0} \mathbbm{1}_{t_1+\dots+t_n \leq t}\,dt_1\dots \,dt_n = e^{-t} \frac{t^n}{n!}.
\end{align*}
\end{proof}

\begin{proof}[Proof of Proposition \ref{prop:multiplicativity}]
For any path $\gamma \in \Gamma_{H_1\square H_2}( (v_1, v_2), (w_1, w_2) )$, we can note that its projections on the first and second coordinate respectively form paths $\gamma_1 \in \Gamma_{H_1}(v_1, w_1)$ and $\gamma_2 \in \Gamma_{H_2}(v_2, w_2)$. Moreover, for each pair $\gamma_1, \gamma_2$ there are ${\abs{\gamma_1} + \abs{\gamma_2} \choose \abs{\gamma_1}}$ such paths $\gamma$ in $H_1 \square H_2$. This implies that
\begin{align*}
m_{H_1 \square H_2}((v_1, v_2), (w_1, w_2), t) &= \sum_{\gamma \in \Gamma_{H_1 \square H_2}((v_1, v_2), (w_1, w_2))} \frac{t^{\abs{\gamma}}}{\abs{\gamma}!} \\
&= \sum_{\gamma_1 \in \Gamma_{H_1}(v_1, w_1)} \sum_{\gamma_2 \in \Gamma_{H_2}(v_2, w_2)} \frac{ (\abs{\gamma_1}+\abs{\gamma_2} )!}{\abs{\gamma_1}!\abs{\gamma_2}!} \frac{t^{\abs{\gamma_1}+\abs{\gamma_2}}}{(\abs{\gamma_1}+\abs{\gamma_2})!}\\
&= m_{H_1}(v_1, w_1, t) m_{H_2}(v_2, w_2, t).
\end{align*}
\end{proof}

\begin{proposition}\label{prop:convolution}
For any graph $H$ and any vertices $v$, $w$ we have
\begin{equation}
\sum_{x\in H} m_H(v, x, s) m_H(x, w, t) = m_H(v, w, s+t).
\end{equation}
\end{proposition}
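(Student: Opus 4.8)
The plan is to prove the identity combinatorially, by expanding both sides as sums over paths and matching up contributions. Using the definition in \eqref{eq:patmostm}, the left-hand side becomes the triple sum
$$\sum_{x\in H} m_H(v,x,s)\, m_H(x,w,t) = \sum_{x\in H}\ \sum_{\gamma_1\in\Gamma_H(v,x)}\ \sum_{\gamma_2\in\Gamma_H(x,w)} \frac{s^{\abs{\gamma_1}}}{\abs{\gamma_1}!}\cdot\frac{t^{\abs{\gamma_2}}}{\abs{\gamma_2}!}.$$
Since $s,t\geq 0$, every summand is non-negative, so this sum is well defined regardless of the order of summation; moreover, because $H$ has bounded degree $d$, there are at most $d^{\ell}$ paths of length $\ell$ leaving any vertex, so each of the two factors $m_H(\cdot,\cdot,s)$, $m_H(\cdot,\cdot,t)$ is bounded by $e^{ds}$, resp.\ $e^{dt}$, and everything in sight is finite.

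First I would set up a bijection between the index set of this triple sum and the set of pairs $(\gamma, j)$, where $\gamma\in\Gamma_H(v,w)$ and $j$ is an integer with $0\leq j\leq\abs{\gamma}$. Given $x$, a path $\gamma_1\in\Gamma_H(v,x)$ and a path $\gamma_2\in\Gamma_H(x,w)$, their concatenation $\gamma=\gamma_1\gamma_2$ lies in $\Gamma_H(v,w)$, and the integer $j:=\abs{\gamma_1}$ records where $\gamma$ is cut; conversely, a path $\gamma$ from $v$ to $w$ together with a cut point $j$ recovers $x$ (the vertex of $\gamma$ reached after $j$ edges, with $x=v$ when $j=0$ and $x=w$ when $j=\abs{\gamma}$) and the two pieces $\gamma_1$, $\gamma_2$. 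Under this bijection the summand depends only on $\abs{\gamma}=\abs{\gamma_1}+\abs{\gamma_2}$ and on $j=\abs{\gamma_1}$.

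Next I would resum in the new variables, grouping by $\gamma$ first. For a fixed path $\gamma$ of length $\ell=\abs{\gamma}$ the total contribution is
$$\sum_{j=0}^{\ell}\frac{s^{j}}{j!}\cdot\frac{t^{\ell-j}}{(\ell-j)!} = \frac{1}{\ell!}\sum_{j=0}^{\ell}\binom{\ell}{j}s^{j}t^{\ell-j} = \frac{(s+t)^{\ell}}{\ell!}$$
by the binomial theorem, where the $\ell=0$ term (which occurs precisely when $v=w$) is accounted for by the stated convention that $t^{0}/0!=1$ even at $t=0$. Summing this over all $\gamma\in\Gamma_H(v,w)$ yields $\sum_{\gamma}(s+t)^{\abs{\gamma}}/\abs{\gamma}! = m_H(v,w,s+t)$, which is the assertion.

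Each step is elementary, so I do not expect a real obstacle; the only point needing care is the handling of the possibly infinite sums, and this is justified by non-negativity of all terms (and, if desired, by absolute convergence coming from the bounded-degree hypothesis). One could alternatively invoke the observation made after \eqref{eq:patmostm} that $m_H(v,w,t)$ is the $vw$-entry of $e^{tA_H}$ and deduce the claim from the semigroup identity $e^{sA_H}e^{tA_H}=e^{(s+t)A_H}$, but in the setting of infinite graphs the combinatorial argument above is cleaner and self-contained.
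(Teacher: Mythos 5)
Your proposal is correct and follows essentially the same route as the paper: decompose each path from $v$ to $w$ into a concatenation at a cut point, reorder the sum, and apply the binomial theorem to obtain $(s+t)^{\abs{\gamma}}/\abs{\gamma}!$. The only difference is that you spell out the bijection and the convergence justification explicitly, which the paper leaves implicit.
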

\begin{proof}
Note that the concatenation of any path $\gamma_1$ from $v$ to a vertex $x$, and any path $\gamma_2$ from $x$ to $w$ forms a path in $\Gamma_H(v, w)$. Using this, we have
\begin{align*}
\sum_{x\in H} m_H(v, x, s) m_H(x, w, t) &= \sum_{\gamma \in \Gamma_H(v, w)} \sum_{\gamma_1+\gamma_2 = \gamma} \frac{s^{\abs{\gamma_1}}}{\abs{\gamma_1}!} \frac{t^{\abs{\gamma_2}}}{\abs{\gamma_2}!}\\
&=\sum_{\gamma \in \Gamma_H(v, w)} \sum_{k=0}^{\abs{\gamma}} \frac{s^k t^{\abs{\gamma}-k}}{k! (\abs{\gamma}-k)!}\\
&= \sum_{\gamma \in \Gamma_H(v, w)} \frac{1}{\abs{\gamma}!} \sum_{k=0}^{\abs{\gamma}} {\abs{\gamma} \choose k} s^k t^{\abs{\gamma}-k}\\
&=\sum_{\gamma \in \Gamma_H(v, w)} \frac{(s+t)^{\abs{\gamma}}}{\abs{\gamma}!}.
\end{align*}
\end{proof}

\begin{lemma}\label{lemma:mbounds}
Let $H$ be a fixed graph and let $\Delta=\Delta(H)$ denote the maximal degree of any vertex in $H$. Then, for any vertex $v\in H$ and any $t\geq 0$, $\sum_{x \in H} m_H(v, x, t)$ and $\sum_{x\in H} m_H(x, v, t)$ are at most $e^{\Delta t}$. Furthermore,
\begin{equation}
m_H(v, w, t) = \begin{cases} O(t) &\text{ if }v\neq w\\
1+O(t^2) &\text{ if }v=w,\end{cases}
\end{equation}
uniformly over all vertices $v, w$ and $0 \leq t \leq M$ for any $M>0$.
\end{lemma}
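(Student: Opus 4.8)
The plan is to extract all three assertions from the path-sum definition \eqref{eq:patmostm} of the generating function, using the bounded-degree hypothesis to control the number of paths of each given length. First I would prove the summability bound. Fix $v$ and $t\geq 0$. Counting paths by their length, there are at most $\Delta^k$ paths of length $k$ starting at $v$ (at each of the $k$ steps at most $\Delta$ choices of outgoing edge), and each such path ends at a unique vertex; hence $\sum_{x\in H} m_H(v,x,t) = \sum_{\gamma\,:\,v\to\cdot} t^{|\gamma|}/|\gamma|! \leq \sum_{k=0}^\infty \Delta^k t^k/k! = e^{\Delta t}$. The argument for $\sum_{x} m_H(x,v,t)$ is identical, counting paths by their reversed edge-sequence terminating at $v$ (for directed graphs one instead counts by in-edges, but the degree bound $\Delta$ still caps the branching). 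This is the routine part.

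Next, for the local estimates near $t=0$: again write $m_H(v,w,t) = \sum_{\gamma\in\Gamma_H(v,w)} t^{|\gamma|}/|\gamma|!$. If $v\neq w$ then every path from $v$ to $w$ has length $\geq 1$, so $m_H(v,w,t) = \sum_{k\geq 1} N_k(v,w) t^k/k!$ where $N_k(v,w)\leq\Delta^k$ is the number of length-$k$ paths from $v$ to $w$; thus $m_H(v,w,t) \leq \sum_{k\geq 1}\Delta^k t^k/k! = e^{\Delta t}-1 \leq \Delta t\, e^{\Delta M}$ for $0\leq t\leq M$, which is $O(t)$ with a constant depending only on $\Delta$ and $M$, hence uniform over all $v\neq w$. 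If $v=w$, the length-$0$ path contributes the constant $1$, there is no path of length $1$ (the graph is loopless), so $m_H(v,v,t) = 1 + \sum_{k\geq 2} N_k(v,v) t^k/k! $ and the tail is bounded by $\sum_{k\geq 2}\Delta^k t^k/k! \leq t^2\Delta^2\sum_{k\geq 0}\Delta^k M^k/k! = O(t^2)$ uniformly, as claimed.

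The main subtlety — really the only thing requiring care — is making sure the path-counting bound $N_k\leq\Delta^k$ is applied correctly when $H$ has multiple edges and a mixture of directed and undirected edges: here $\Delta$ must be interpreted as the maximum, over vertices $x$, of the number of edge-ends at $x$ that can be used to leave $x$ (counting multiplicities and orientations), and "length" and "number of paths" count edges with multiplicity, consistently with the convention fixed after \eqref{eq:patmostm}. With that reading, at each step of building a path from $v$ there are at most $\Delta$ admissible edges, so $N_k\leq\Delta^k$ holds verbatim, and for the reversed count at $v$ one uses that the in-branching is likewise at most $\Delta$. Everything else is the elementary bound $\sum_{k\geq j}\Delta^k t^k/k! \leq (\Delta t)^j e^{\Delta t}/j!$ and does not merit spelling out.
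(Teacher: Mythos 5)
Your argument is correct and is essentially the paper's own proof: both bound the number of paths of length $k$ from (or to) $v$ by $\Delta^k$ to get the $e^{\Delta t}$ bound, then drop the length-$0$ terms (for $v\neq w$) or the length-$1$ terms (for $v=w$, using looplessness) to obtain the uniform $O(t)$ and $1+O(t^2)$ estimates. The extra care you take with multi-edges and mixed orientations is consistent with the paper's conventions and does not change the argument.
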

\begin{proof}
Since the degree of any vertex in $H$ is at most $\Delta$, then there are at most $\Delta^k$ paths of length $k$ starting at $v$ or ending at $v$ respectively. This implies the bound $1+\Delta t+\frac{\Delta^2 t^2}{2}+\dots = e^{\Delta t}$ for both $\sum_{x} m_H(v, x, t)$ and $\sum_x m_H(x, v, t)$. Since there are no paths of length $0$ from $v$ to $w$ if $v\neq w$, we get $m_H(v, w, t) \leq e^{\Delta t} - 1 = O(t)$. Similarly, since there are no paths from $v$ to $v$ of length $1$, $1 \leq m_H(v, v, t) \leq e^{\Delta t}-\Delta t = 1+O(t^2)$.
\end{proof}

\begin{proposition}\label{prop:unifconv}
For a graph $H$ with distinct vertices $v$ and $w$ such that $t^*_H(v, w)<\infty$, we have, for any $M > 0$ and $\varepsilon>0$, that the sum
\begin{equation}\label{eq:defofg}
g_H^{vw}(s, t, u, \alpha) = \sum_{x, y\in H} m_H(v, x, s) m_H(x, y, t)^{1+\alpha} m_H(y, w, u)
\end{equation}
together with all its term-wise $\alpha$-derivatives converges uniformly for $0\leq s, t, u \leq M$ and $-1+\varepsilon\leq \alpha \leq M$. In particular,
$f_H^{vw}(s, t) = \frac{\partial g_H^{vw} }{\partial \alpha}(s, t, t^*-s-t, 0)$
is continuous, and
\begin{equation}\label{eq:gtaylor}
g_H^{vw}(s, t, t^*-s-t, \alpha) = 1 + \alpha f_H^{vw}(s, t) + O((t^*-t)\alpha^2),
\end{equation}
for any  $s, t\geq 0$ such that $s+t\leq t^*$ and $1-\varepsilon \leq \alpha \leq M$.
\end{proposition}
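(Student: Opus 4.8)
The plan is to prove the three assertions of the proposition in turn. For an integer $j\ge 0$, differentiating \eqref{eq:defofg} $j$ times under the summation sign produces a series $S_j$ whose general term is $m_H(v,x,s)\,m_H(x,y,t)^{1+\alpha}(\ln m_H(x,y,t))^{j}m_H(y,w,u)$ (for $j=0$ this is $g_H^{vw}$ itself), with the convention $0\cdot(\ln 0)^{j}=0$. Since each such term is a $C^\infty$ function of $\alpha$ on $(-1,\infty)$, the central point is to show that every $S_j$ converges uniformly on $K_M\defeq\{0\le s,t,u\le M,\ -1+\varepsilon\le\alpha\le M\}$; the standard theorem on term-by-term differentiation of a series whose derivative series converges uniformly then gives $\partial_\alpha^{j}g_H^{vw}=S_j$, and in particular $\partial_\alpha g_H^{vw}(s,t,t^*-s-t,0)$ is exactly the sum in \eqref{eq:fst}. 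Write $\Delta\defeq\Delta(H)$, $C_M\defeq e^{\Delta M}$ and, for $d\ge 1$, $u_d\defeq(\Delta M)^{d}e^{\Delta M}/d!$. The three tools I would use are: (a) bounded degree, so there are at most $\Delta^{d}$ vertices at distance $d$ from a given vertex and at most $\Delta^{k}$ walks of length $k$ from a given vertex; (b) Lemma \ref{lemma:mbounds} and refinements of it, namely $\sum_{x}m_H(v,x,s)\le C_M$, $m_H(y,w,u)\le C_M$, the tail bound $\sum_{x:\dist(v,x)\ge D}m_H(v,x,s)\le\frac{(\Delta M)^{D}}{D!}e^{\Delta M}$, and, whenever $\dist(x,y)=d\ge 1$ and $m_H(x,y,t)>0$, the estimate $\frac{t^{d}}{d!}\le m_H(x,y,t)\le\frac{(\Delta t)^{d}}{d!}e^{\Delta t}\le u_d$; (c) the elementary facts that $\sup\{z^{p}\abs{\ln z}^{j}:0<z\le C_M,\ \varepsilon\le p\le 1+M\}<\infty$, and that $z\mapsto z^{p}\abs{\ln z}^{j}$ is nondecreasing on $(0,z_0]$ as soon as $z_0\le e^{-j/\varepsilon}$.

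For the uniform convergence of $S_j$, I would, given $\eta>0$, produce a finite set $F_{D',D}\defeq\{(x,y):\dist(v,x)<D',\ \dist(x,y)<D\}$ (finite because $H$ has bounded degree) outside of which the tail of $S_j$ is $<\eta$ throughout $K_M$. Splitting the complement of $F_{D',D}$ according to whether $\dist(v,x)\ge D'$ or ($\dist(v,x)<D'$ while $\dist(x,y)\ge D$), the first piece is at most $\big(\sum_{x:\dist(v,x)\ge D'}m_H(v,x,s)\big)B_j$ and the second at most $\big(\sum_{x}m_H(v,x,s)\big)\sup_{x}\sum_{y:\dist(x,y)\ge D}m_H(x,y,t)^{1+\alpha}\abs{\ln m_H(x,y,t)}^{j}m_H(y,w,u)$, where
\[
B_j\defeq\sup\Big\{\sum_{y}m_H(x,y,t)^{1+\alpha}\abs{\ln m_H(x,y,t)}^{j}m_H(y,w,u)\ :\ x\in H,\ (s,t,u,\alpha)\in K_M\Big\}.
\]
Grouping the $y$'s by $d=\dist(x,y)$ and invoking (a)--(c), the $d$-th group contributes at most $\Delta^{d}C_M\,u_d^{\varepsilon}\abs{\ln u_d}^{j}$ once $d$ is large enough that $u_d\le e^{-j/\varepsilon}$, and a fixed finite amount for the finitely many smaller $d$; since $\abs{\ln u_d}=O(d\ln d)$ while $u_d^{\varepsilon}=((\Delta M)^{d}e^{\Delta M}/d!)^{\varepsilon}$ decays faster than geometrically, the ratio test gives $\sum_{d}\Delta^{d}u_d^{\varepsilon}\abs{\ln u_d}^{j}<\infty$, so $B_j<\infty$ and, moreover, its tail $\sum_{d\ge D}\Delta^{d}u_d^{\varepsilon}\abs{\ln u_d}^{j}\to 0$ as $D\to\infty$. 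Together with $\sum_{x:\dist(v,x)\ge D'}m_H(v,x,s)\le\frac{(\Delta M)^{D'}}{D'!}e^{\Delta M}\to 0$, both pieces vanish uniformly as $D,D'\to\infty$, which is the desired uniform convergence. This step is \emph{the crux of the proof}: when $\alpha\in(-1,0)$ the exponent $1+\alpha$ is below $1$, so raising $m_H(x,y,t)$ to that power and inserting $\abs{\ln m_H(x,y,t)}^{j}$ both fight convergence, and what rescues the argument is that the factorial decay of $m_H(x,y,t)$ in $d=\dist(x,y)$ dominates both the exponential growth $\Delta^{d}$ of the number of far vertices and the merely polynomial-in-$d$ growth of $\abs{\ln u_d}^{j}$.

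Continuity of $f_H^{vw}$ then follows softly: every term of $S_0$ and of $S_1$ is continuous on $K_M$ --- the one delicate point, joint continuity of $(m,\alpha)\mapsto m^{1+\alpha}$ and of $m^{1+\alpha}\ln m$ at $m=0$, follows from $m^{1+\alpha}\abs{\ln m}\le m^{\varepsilon/2}$ for $m$ near $0$ --- so $g_H^{vw}$ and $S_1$ are continuous as uniform limits of continuous functions, and $f_H^{vw}(s,t)$ is obtained from $S_1$ by the continuous substitution $u=t^*-s-t,\ \alpha=0$ (taking $M\ge t^*$). Finally, for \eqref{eq:gtaylor} set $g(\alpha)\defeq g_H^{vw}(s,t,t^*-s-t,\alpha)$. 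Two applications of Proposition \ref{prop:convolution} give $g(0)=\sum_{x,y}m_H(v,x,s)m_H(x,y,t)m_H(y,w,t^*-s-t)=m_H(v,w,t^*)=1$; moreover $g'(0)=f_H^{vw}(s,t)$ by the above and $g''(\alpha)=\sum_{x,y}m_H(v,x,s)m_H(x,y,t)^{1+\alpha}(\ln m_H(x,y,t))^{2}m_H(y,w,t^*-s-t)\ge 0$. By Taylor's theorem with integral remainder it therefore suffices to show $0\le g''(\alpha)\le C(t^*-t)$ uniformly over $s,t\ge 0$ with $s+t\le t^*$ and over $\alpha\in[-1+\varepsilon,M]$ (I read the ``$1-\varepsilon\le\alpha\le M$'' in the statement as a typo for $-1+\varepsilon\le\alpha\le M$); this gives $\abs{g(\alpha)-1-\alpha f_H^{vw}(s,t)}\le C(t^*-t)\alpha^{2}$. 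To prove the bound, isolate the term $(x,y)=(v,w)$: with $q_{xy}\defeq m_H(v,x,s)m_H(y,w,t^*-s-t)$, Lemma \ref{lemma:mbounds} gives $\sum_{x,y}q_{xy}\le e^{\Delta s}e^{\Delta(t^*-s-t)}=e^{\Delta(t^*-t)}$ whereas $q_{vw}\ge 1$, so $\sum_{(x,y)\ne(v,w)}q_{xy}\le e^{\Delta(t^*-t)}-1=O(t^*-t)$, and since $m_H(x,y,t)^{1+\alpha}(\ln m_H(x,y,t))^{2}$ is bounded by (c), the off-diagonal part of $g''$ is $O(t^*-t)$; for the term $(x,y)=(v,w)$ one has $q_{vw}\le e^{\Delta t^*}$ and $\abs{m_H(v,w,t)-1}=\abs{m_H(v,w,t)-m_H(v,w,t^*)}\le(t^*-t)\sup_{0\le r\le t^*}\abs{\partial_r m_H(v,w,r)}\le(t^*-t)\Delta e^{\Delta t^*}$ (differentiate the defining series of $m_H(v,w,\cdot)$ and use Lemma \ref{lemma:mbounds}), so for $t^*-t$ small $\ln m_H(v,w,t)=O(t^*-t)$ and this term is $O((t^*-t)^{2})$, while for $t^*-t$ bounded below the crude estimate $g''(\alpha)\le e^{\Delta(t^*-t)}\sup_{x,y}m_H(x,y,t)^{1+\alpha}(\ln m_H(x,y,t))^{2}$ already suffices. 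Besides the uniform-convergence step, the point requiring some care is precisely this last estimate: the mass of $q_{xy}$ concentrates on $(x,y)=(v,w)$ as $t\uparrow t^*$, and that concentration is what produces the factor $t^*-t$.
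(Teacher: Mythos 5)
Your proof is correct, and its second half (the expansion \eqref{eq:gtaylor}) is essentially the paper's own argument: isolate the term $(x,y)=(v,w)$, bound the remaining mass by $e^{\Delta(t^*-t)}-1=O(t^*-t)$ using Lemma \ref{lemma:mbounds} together with $m_H(v,v,s)\,m_H(w,w,t^*-s-t)\ge 1$, and control the $(v,w)$ term through a Lipschitz/Taylor bound on $m_H(v,w,\cdot)$ near $t^*$; you are in fact a bit more careful than the paper in the regime where $t^*-t$ is bounded away from $0$ (where $\ln m_H(v,w,t)$ is not small but the crude uniform bound suffices), and your reading of ``$1-\varepsilon\le\alpha\le M$'' as a typo for $-1+\varepsilon\le\alpha\le M$ is the intended one. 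Where you genuinely diverge is the uniform-convergence step: you run a distance-shell decomposition, exploiting the factorial decay of $m_H(x,y,t)$ in $\dist(x,y)$ and the monotonicity of $z^{\varepsilon}\abs{\ln z}^{j}$ near $0$ to make the tail outside a finite set of pairs $(x,y)$ uniformly small. The paper gets the same conclusion in two lines: since $m_H(x,y,t)\le e^{\Delta M}$ uniformly and $z^{1+\alpha}\abs{\ln z}^{n}$ is bounded on $(0,e^{\Delta M}]$ uniformly over $1+\alpha\ge\varepsilon$, every middle factor is at most a constant $C_n(M)$, so by monotonicity of $m_H$ in $t$ the Weierstrass M-test applies with the parameter-free dominating series $\sum_{x,y}m_H(v,x,M)\,C_n\,m_H(y,w,M)\le C_n e^{2\Delta M}$. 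Thus your shell argument is valid but does work that the uniform bound on the middle factor renders unnecessary; in exchange it produces an explicit quantitative tail estimate that the paper's softer argument does not provide.
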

\begin{proof}
As $m_H(x, y, t)$ is uniformly bounded over all $x, y$ and $0 \leq t \leq M$, it follows that, for any fixed $n\geq 0$, we have $$\frac{\partial^n}{\partial \alpha^n} m_H(x, y, t)^{1+\alpha} = m_H(x, y, t)^{1+\alpha}\left(\ln m_H(x, y, t)\right)^n,$$ bounded in absolute value by a constant $C_n=C_n(M)>0$ for $0\leq t \leq M$ and $-1+\varepsilon \leq \alpha \leq M$. Hence, by Lemma \ref{lemma:mbounds} the sum for $\frac{\partial^n}{\partial \alpha^n} g_H^{vw}(s, t, u, \alpha)$ is dominated by
$$ \sum_{x, y \in H} m_H(v, x, M)\, C_n\, m_H(y, w, M) \leq C_n e^{2\Delta M},$$
thus, we have uniform convergence.% This directly implies that $f_H^{vw}(s, t)$ is continuous.

As for \eqref{eq:gtaylor}, it remains to show that
\begin{align*}
&\frac{\partial^2}{\partial\alpha^2} g^{vw}_H(s, t, t^*-s-t, \alpha)\\
&\quad = \sum_{x, y\in H} m_H(v, x, s) m_H(x, y, t)^{1+\alpha} \left(\ln m_H(x, y, t)\right)^2 m_H(y, w, t^*-s-t)
\end{align*}
is $O(t^*-t)$. When $x=v$ and $y=w$, the summand is bounded by a constant times $\left( \ln m_H(v, w, t) \right)^2$. By Taylor expanding $m_H(v, w, t)$ around $t=t^*$, we see that the this is $O( (t^*-t)^2)$. Using Lemma \ref{lemma:mbounds}, the sum of the remaining terms is bounded from above by
\begin{align*}
&C_2 \sum_{x\neq v \text{ or }y \neq w} m_H(v, x, s) m_H(y, w, t^*-s-t)\\
&\qquad = C_2 \sum_{x, y\in H} m_H(v, x, s) m_H(y, w, t^*-s-t)\\
&\qquad\qquad  - C_2m_H(v, v, s)m_H(w, w, t^*-s-t)\\
&\qquad \leq C_2 \left( e^{\Delta s} e^{\Delta(t^*-s-t)} - \left(1+O(s^2) \right)\left(1+O( (t^*-s-t)^2) \right)\right) = O(t^*-t).
\end{align*}
\begin{comment}
\newpage

Secondly, we consider the Taylor expansion formula for $g_H^{vw}(s, t, t^*-s-t, \alpha)$. Assume $s, t \geq 0$ and $s+t\leq t^*$. Note that by Proposition \ref{prop:convolution}, we have $g_H^{vw}(s, t, t^*-s-t, 0) = m_H(v, w, t^*)=1$, and $\frac{\partial g_H^{vw}}{\partial \alpha}(s, t, t^*-s-t, 0) = f_H^{vw}(s, t)$. Further, as $m_H(v, w, t) = 1-O(t^*-t)$ in this range, we have
$$ m_H(v, v, s) m_H(v, w, t)^{1+\alpha} \left( \ln m_H(v, w, t) \right)^2 m_H(w, w, t^*-s-t) = O\left( (t^*-t)^2 \right).$$
Moreover, by Lemma \ref{lemma:mbounds},
\begin{align*}
0 &\leq \sum_{x\neq v \text{ or }y \neq w} m_H(v, x, s) m_H(x, y, t)^{1+\alpha} \left(\ln m_H(x, y, t)\right)^2 m_H(y, w, t^*-s-t)\\
&\leq C_2 \sum_{x\neq v \text{ or }y \neq w} m_H(v, x, s) m_H(y, w, t^*-s-t)\\
&\leq C_2 (e^{ \Delta s + \Delta(t^*-s-t)}-1) = O(t^*-t).  
\end{align*}
Hence $\frac{\partial^2}{\partial \alpha^2} g_H^{vw}(s, t, t^*-s-t, \alpha) = O(t^*-t)$, as desired.
\end{comment}
\end{proof}

We will now introduce a kind of random walk that will simplify the proofs of Proposition \ref{prop:charf} and the case in Theorem \ref{thm:main} where $T_{G^n}(\bar{v}, \bar{w}) \rightarrow t^*$. Let $H$ be a graph, and let $v, w$ be vertices in $H$. Let $\Delta_o=\Delta_o(H)$ denote the maximal out-degree of any vertex in $H$. Let $\{\tilde{X}_t\}_{t=0}^\infty$ be the continuous-time random walk on $H$ defined as follows: Initially we have $\tilde{X}=v$. After this, the random walker takes steps at rate $\Delta_o(H)$. Whenever it takes a step, the walker randomly chooses an outgoing edge from $\tilde{X}$, each with probability $1/\Delta_o(H)$, and moves to the opposite end-point of that edge. Note that if the out-degree of $\tilde{X}$ is strictly less than $\Delta_o(H)$, then there is a positive probability of no edge being chosen. If this occurs, we move the walker to an absorbing fail state. Equivalently, we can imagine that we have added directed edges from the vertices of $H$ to a sink so that all vertices have out-degree equal to $\Delta_o$. We define the conditioned random walk from $v$ to $w$ on $H$ in time $s$, $\{X_t\}_{t=0}^s$, as the conditioned process $\{\tilde{X}_t\}_{t=0}^s$ given $\tilde{X}_s=w$.

One important observation to make about this process is that if $\{X_t\}_{t=0}^s$ is the conditioned random walk from $(v_1, v_2)$ to $(w_1, w_2)$ on $H_1\square H_2$ in time $s$, then its coordinates form two independent conditioned random walks from $v_1$ to $w_1$ on $H_1$ and from $v_2$ to $w_2$ on $H_2$ respectively in time $s$. This follows from the coordinate-wise independence of the corresponding process $\{\tilde{X}_t\}_{t=0}^s$. More precisely, in this case the coordinates of $\{\tilde{X}_t\}_{t=0}^s$ take steps independently at rates $\Delta_o(H_1)$ and $\Delta_o(H_2)$ respectively, and the walk fails whenever one of the coordinates fails.

\begin{lemma}\label{lemma:probgogood} For any path $\gamma$ in $H$ starting at $v$, the probability that $\tilde{X}$ traces $\gamma$ during $[0, s]$ is
$$\mathbb{P}\left(\Poiss(\Delta_0 s)=\abs{\gamma}\right) \times \frac{1}{\Delta_o^{\abs{\gamma}}} = e^{-\Delta_o s} \frac{ s^{\abs{\gamma}}}{\abs{\gamma}!}.$$
Moreover, for $t^*=t^*_H(v, w)$, we have 
$$\mathbb{P}\left( \tilde{X}_{t^*}=w \right) = e^{-\Delta_o t^*}.$$
\end{lemma}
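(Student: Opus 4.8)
The plan is to exploit the fact that the process $\{\tilde X_t\}$ is built from two independent sources of randomness: the jump times, which form a Poisson process of rate $\Delta_o$ on $[0,\infty)$, and the sequence of edge-choices made at successive jumps, which are i.i.d.\ uniform over the $\Delta_o$ outgoing slots at the current vertex (some of which may be ``fail'' slots). First I would make precise what it means for $\tilde X$ to \emph{trace} $\gamma$ during $[0,s]$: exactly $\abs{\gamma}$ jumps occur in $[0,s]$, and the ordered sequence of edges selected at these jumps is exactly the edge-sequence of $\gamma$. Since $\gamma$ is a genuine path in $H$, at each of its vertices one of the $\Delta_o$ slots corresponds to the next edge of $\gamma$, so in particular no ``fail'' slot is ever selected along the way.

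With this description the event factorises. The number of jumps in $[0,s]$ is $\Poiss(\Delta_o s)$-distributed, so the event ``exactly $\abs{\gamma}$ jumps in $[0,s]$'' has probability $e^{-\Delta_o s}(\Delta_o s)^{\abs{\gamma}}/\abs{\gamma}!$. Conditionally on this, and indeed independently of the jump times, the $\abs{\gamma}$ edge-choices are independent and each hits the prescribed edge with probability $1/\Delta_o$, contributing a factor $\Delta_o^{-\abs{\gamma}}$. Multiplying the two and cancelling $\Delta_o^{\abs{\gamma}}$ gives $e^{-\Delta_o s}\, s^{\abs{\gamma}}/\abs{\gamma}!$, which is the claimed identity (the first expression in the statement being just the unsimplified product).

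For the second statement, observe that almost surely only finitely many jumps occur in $[0,t^*]$, so the event $\{\tilde X_{t^*} = w\}$ is the disjoint union, over all paths $\gamma \in \Gamma_H(v,w)$ (including the length-$0$ path when $v=w$), of the events ``$\tilde X$ traces $\gamma$ during $[0,t^*]$''. Summing the probabilities just obtained,
\begin{equation*}
\mathbb{P}\left(\tilde X_{t^*} = w\right) = \sum_{\gamma \in \Gamma_H(v,w)} e^{-\Delta_o t^*}\frac{(t^*)^{\abs{\gamma}}}{\abs{\gamma}!} = e^{-\Delta_o t^*}\, m_H(v, w, t^*) = e^{-\Delta_o t^*},
\end{equation*}
where the last equality is the defining property $m_H(v,w,t^*)=1$ of the critical time $t^*=t^*_H(v,w)$. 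The interchange of sum and probability is legitimate because the events are pairwise disjoint and the series $\sum_\gamma (t^*)^{\abs{\gamma}}/\abs{\gamma}! = m_H(v,w,t^*)$ converges (it equals $1$).

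I do not expect any serious obstacle here; the only points requiring a little care are pinning down the definition of ``tracing $\gamma$'', the independence of the jump times from the edge-choices (inherited from the construction of $\tilde X$), checking that the fail state is automatically excluded along any traced path, and verifying that $\{\tilde X_{t^*}=w\}$ genuinely partitions into the tracing events so that countable additivity applies.
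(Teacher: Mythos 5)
Your proposal is correct and follows essentially the same route as the paper: factor the tracing event into the Poisson$(\Delta_o s)$ number of jumps and the independent uniform edge-choices, then obtain the second identity by summing the tracing probabilities over all $\gamma\in\Gamma_H(v,w)$ and using $m_H(v,w,t^*)=1$. The extra care you take (disjointness of the tracing events, exclusion of the fail state, countable additivity) is just a more explicit rendering of the paper's one-line argument.
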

\begin{proof} In order for $\tilde{X}$ to trace $\gamma$ during $[0, s]$ it must take precisely $\abs{\gamma}$ steps during this time, and these steps must be the successive edges of $\gamma$, which implies the first statement. Letting $s=t^*$ and summing this over all $\gamma\in \Gamma_H(v, w)$, we get
$$\mathbb{P}\left( \tilde{X}_{t^*} = w\right) = e^{-\Delta_ot^*} m_H(v, w, t^*) = e^{-\Delta_o t^*}.$$
\end{proof}

The following simple consequence of this lemma is useful in considering the conditioned random walk
\begin{proposition}\label{prop:boundcondbyuncond}
Let $\{X_t\}_{t=0}^{t^*}$ be the conditioned random walk from $v$ to $w$ on $H$ in time $t^*$, and let $\{\tilde{X}_t\}_{t=0}^{t^*}$ be the corresponding unconditioned process. Then
$$ \mathbb{P}\left( X \in \cdot\,\right) \leq e^{\Delta_o t^*} \mathbb{P}\left( \tilde{X} \in \cdot\, \right),$$
that is, the probability of any event for $X$ is at most a constant times the corresponding event for $\tilde{X}$.
\end{proposition}
\begin{proof}$\mathbb{P}\left( X \in \cdot\,\right) = \mathbb{P}\left( \tilde{X} \in \cdot\,\middle\vert \tilde{X}_{t^*}=w\right) = \mathbb{P}\left( \tilde{X} \in \cdot\, \wedge \tilde{X}_{t^*}=w\right)/\mathbb{P}\left( \tilde{X}_{t^*}=w\right) = e^{\Delta_o t^*}\mathbb{P}\left( \tilde{X} \in \cdot\, \wedge \tilde{X}_{t^*}=w\right) \leq e^{\Delta_o t^*}\mathbb{P}\left( \tilde{X} \in \cdot\,\right)$.
\end{proof}

\begin{proposition}\label{prop:infprob}
Let $\{X_t\}_{t=0}^{t^*}$ be as above and let $\gamma \in \Gamma_H(v, w)$. Then,
\begin{equation}\label{eq:probtracepath}
\mathbb{P}\left( X\text{ traces }\gamma\right) = \frac{(t^*)^{\abs{\gamma}} }{ \abs{\gamma}!}.
\end{equation}
Furthermore, for any $0 < t_1 < \dots < t_{\abs{\gamma}} < t^*$, the probability that $X$ traces $\gamma$ and takes its steps during $[t_1, t_1+dt_1), [t_2, t_2+dt_2), \dots$ and so on is $dt_1\,dt_2\,\dots\,dt_{\abs{\gamma}}$.
\end{proposition}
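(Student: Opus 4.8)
The plan is to deduce both statements directly from Lemma \ref{lemma:probgogood} by unwinding the definition of the conditioned walk $X$ as $\tilde{X}$ conditioned on $\tilde{X}_{t^*}=w$. The first thing I would record is the elementary but essential containment of events: if $\tilde{X}$ traces $\gamma$ on $[0,t^*]$, then since $\gamma$ ends at $w$ we necessarily have $\tilde{X}_{t^*}=w$, so $\{\tilde{X}\text{ traces }\gamma\}\subseteq\{\tilde{X}_{t^*}=w\}$. Consequently
\[
\mathbb{P}\left(X\text{ traces }\gamma\right)=\mathbb{P}\left(\tilde{X}\text{ traces }\gamma\,\middle\vert\,\tilde{X}_{t^*}=w\right)=\frac{\mathbb{P}\left(\tilde{X}\text{ traces }\gamma\right)}{\mathbb{P}\left(\tilde{X}_{t^*}=w\right)}.
\]
Both quantities on the right are supplied by Lemma \ref{lemma:probgogood}: the numerator equals $e^{-\Delta_o t^*}(t^*)^{\abs{\gamma}}/\abs{\gamma}!$ and the denominator equals $e^{-\Delta_o t^*}$, so their ratio is $(t^*)^{\abs{\gamma}}/\abs{\gamma}!$, which is exactly \eqref{eq:probtracepath}.

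For the refined (infinitesimal) statement I would run the same conditioning at the level of the jump times. The jump times of $\tilde{X}$ on $[0,t^*]$ form a rate-$\Delta_o$ Poisson process, and conditionally on there being exactly $\abs{\gamma}$ of them the successive out-slot choices are i.i.d.\ uniform on the $\Delta_o$ slots at the current vertex (a missing-edge slot sending the walk to the fail state). Hence the probability that $\tilde{X}$ has its jumps in $[t_1,t_1+dt_1),\dots,[t_{\abs{\gamma}},t_{\abs{\gamma}}+dt_{\abs{\gamma}})$, has no other jumps in $[0,t^*]$, and selects precisely the successive edges of $\gamma$ equals $\bigl(e^{-\Delta_o t^*}\Delta_o^{\abs{\gamma}}\,dt_1\cdots dt_{\abs{\gamma}}\bigr)\cdot\Delta_o^{-\abs{\gamma}}=e^{-\Delta_o t^*}\,dt_1\cdots dt_{\abs{\gamma}}$. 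Dividing by $\mathbb{P}(\tilde{X}_{t^*}=w)=e^{-\Delta_o t^*}$ gives $dt_1\cdots dt_{\abs{\gamma}}$, as claimed; integrating this density over the simplex $0<t_1<\dots<t_{\abs{\gamma}}<t^*$ reproduces \eqref{eq:probtracepath} and serves as a consistency check.

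I do not expect a serious obstacle here — the argument is bookkeeping built on Lemma \ref{lemma:probgogood}. The one point that needs care is phrasing the infinitesimal claim rigorously, i.e.\ stating it as a product form for the joint law of the triple (number of steps, ordered vector of step times, ordered vector of chosen edge labels) of $\tilde{X}$ on $[0,t^*]$, and then observing that conditioning on $\tilde{X}_{t^*}=w$ merely renormalizes by the constant $e^{-\Delta_o t^*}$ on the slice where the edge labels spell out a path from $v$ to $w$. Everything else is routine.
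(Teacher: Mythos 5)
Your proposal is correct and follows essentially the same route as the paper: the first identity is obtained exactly as in the paper by dividing the two probabilities from Lemma \ref{lemma:probgogood}, and your direct infinitesimal computation for the jump times of $\tilde{X}$ is just a reshuffling of the paper's observation that, conditioned on $X$ tracing $\gamma$, the step times are distributed as uniform order statistics on $[0,t^*]$ (a Poisson process conditioned on its arrival count). No gaps; the argument is fine as stated.
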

\begin{proof}
For any $\gamma \in \Gamma_H(v, w)$, we have by Lemma \ref{lemma:probgogood} that
$$\mathbb{P}\left( X\text{ traces }\gamma\right) = \mathbb{P}\left( \tilde{X}\text{ traces }\gamma\text{ during }[0, t^*]\right)/\mathbb{P}\left(\tilde{X}_{t^*} = w\right) = \frac{(t^*)^{\abs{\gamma}} }{ \abs{\gamma}!}.$$
Furthermore, if we condition on $X$ tracing $\gamma$, then the step times are distributed as a Poisson process with rate $\Delta_o$ conditioned on the number of arrivals in $[0, t^*]$ being equal to $\abs{\gamma}$, that is as  the order statistics of a $\abs{\gamma}$-sample following the uniform law on $[0, t^*]$.
%then the step times are distributed as a rate $\Delta_o$ Poisson process conditioned on the number of arrivals being equal to $\abs{\gamma}$, that is, as a sorted list of $\abs{\gamma}$ independent $U(0, t^*)$ random variables. 
Hence the probability for the steps to occur during the above intervals is $\abs{\gamma}! (t^*)^{-\abs{\gamma}}\,dt_1\,dt_2\dots\,dt_{\abs{\gamma}}$.
\end{proof}

Finally, we show a useful connection between the conditioned random walks and certain sums containing the generating function. One can observe that, by Proposition \ref{prop:convolution}, for any $0\leq t_1 \leq \dots \leq t_k \leq t^*=t^*_H(v, w)$, we have
\begin{align*}
&\sum_{x_1, \dots, x_k \in H} m_H(v, x_1, t_1) m_H(x_1, x_2, t_2-t_1)\cdot \dots \cdot m_H(x_k, w, t^*-t_k)\\
&\qquad = m_H(v, w, t^*)= 1,
\end{align*}
hence this sum defines a probability distribution on $x_1, \dots, x_k$.

\begin{proposition}\label{prop:convisprob}
Let $\{X_t\}_{t=0}^{t^*}$ be as above, and let $0 \leq t_1 \leq \dots \leq t_k \leq t^*$ and $x_1, \dots, x_k \in H$. Then
\begin{equation}\label{eq:convisprob}
\begin{split}
&\mathbb{P}\left( X_{t_i}=x_i \text{ for }1\leq i \leq k\right)\\
&\qquad = m_H(v, x_1, t_1) m_H(x_1, x_2, t_2-t_1)\cdot \dots \cdot m_H(x_k, w, t^*-t_k).
\end{split}
\end{equation}
As a consequence
\begin{equation}\label{eq:fisprob}
f_H^{vw}(s, t) = \mathbb{E}\ln m_H( X_s, X_{s+t}, t).
\end{equation}
\end{proposition}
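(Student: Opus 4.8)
The plan is to derive the finite-dimensional law \eqref{eq:convisprob} from the Markov property of the unconditioned walk $\{\tilde X_t\}$, and then obtain \eqref{eq:fisprob} by specializing to $k=2$.

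First I would identify the transition kernel of $\tilde X$. Started from a vertex $a$, the event $\{\tilde X_s = b\}$ (with $b\in H$) is, up to a null set, the disjoint union over $\gamma\in\Gamma_H(a,b)$ of the events ``$\tilde X$ traces $\gamma$ during $[0,s]$'': on $\{\tilde X_s=b\}$ the walker has not reached the fail state, so its trajectory on $[0,s]$ is a well-defined path in $H$ from $a$ to $b$ (a unique $\gamma\in\Gamma_H(a,b)$), and conversely tracing such a $\gamma$ forces $\tilde X_s=b$. Summing the identity of Lemma \ref{lemma:probgogood} over these $\gamma$ gives $\mathbb{P}(\tilde X_s = b \mid \tilde X_0 = a) = e^{-\Delta_o s}\sum_{\gamma\in\Gamma_H(a,b)} s^{\abs{\gamma}}/\abs{\gamma}! = e^{-\Delta_o s}\, m_H(a,b,s)$.

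Next, since $\{\tilde X_t\}$ is a time-homogeneous continuous-time Markov chain on $V(H)\cup\{\mathrm{fail}\}$, for $0\le t_1\le\dots\le t_k\le t^*$ and $x_1,\dots,x_k\in H$ the joint probability $\mathbb{P}(\tilde X_{t_1}=x_1,\dots,\tilde X_{t_k}=x_k,\tilde X_{t^*}=w)$ factorizes into a product of the transition probabilities just computed; the $k+1$ exponential prefactors telescope to $e^{-\Delta_o t^*}$, and coincident times or the endpoint cases $t_1=0$, $t_k=t^*$ are absorbed by the convention $t^0/0!=1$. Dividing through by $\mathbb{P}(\tilde X_{t^*}=w)=e^{-\Delta_o t^*}$ (again Lemma \ref{lemma:probgogood}) and recalling that $X$ is $\tilde X$ conditioned on $\{\tilde X_{t^*}=w\}$ yields \eqref{eq:convisprob}. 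Finally, applying \eqref{eq:convisprob} with $k=2$, $t_1=s$, $t_2=s+t$ gives $\mathbb{P}(X_s=x,X_{s+t}=y)=m_H(v,x,s)m_H(x,y,t)m_H(y,w,t^*-s-t)$, and summing $\ln m_H(x,y,t)$ against this law recovers exactly the series \eqref{eq:fst} defining $f_H^{vw}(s,t)$.

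I expect the only points requiring care to be: (a) verifying in the first step that the tracing events genuinely partition $\{\tilde X_s=b\}$, so that Lemma \ref{lemma:probgogood} does sum to the transition kernel; and (b) justifying the interchange of sum and expectation in the last step — here one notes that terms with $m_H(x,y,t)=0$ carry zero probability and are assigned value $0$, in agreement with the convention in \eqref{eq:fst}, while absolute convergence follows because $z\mapsto z\abs{\ln z}$ is bounded on any bounded interval and $m_H$ is uniformly bounded (Lemma \ref{lemma:mbounds}), so the relevant series is dominated by a constant times $\sum_{x,y} m_H(v,x,s)m_H(y,w,t^*-s-t)\le e^{\Delta s+\Delta(t^*-s-t)}<\infty$. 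Everything else is the Markov property together with the telescoping of the exponential factors, so there is no substantial obstacle.
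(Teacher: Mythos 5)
Your proposal is correct and follows essentially the same route as the paper: compute the transition kernel of the unconditioned walk by summing the path-tracing probabilities of Lemma \ref{lemma:probgogood} to get $e^{-\Delta_o s}m_H(a,b,s)$, multiply these via the Markov property so the exponential factors telescope, and divide by $\mathbb{P}(\tilde X_{t^*}=w)=e^{-\Delta_o t^*}$; the identity \eqref{eq:fisprob} then follows from the $k=2$ case. Your extra care about the partition into tracing events and the absolute convergence of the sum defining $f_H^{vw}$ is sound and only makes explicit what the paper leaves implicit.
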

\begin{proof}
Let us, to simplify notation, write $x_0=v$, $x_{k+1}=w$, $t_0=0$ and $t_{k+1}=t^*$. For any $0 \leq i \leq k$, we have
\begin{align*}
\mathbb{P}\left( \tilde{X}_{t_{i+1}} = x_{i+1} \middle\vert \tilde{X}_{t_i}=x_i\right) &= \sum_{\gamma \in \Gamma_H(x_i, x_{i+1})} e^{-\Delta_o(H) (t_{i+1}-t_i) } \frac{ (t_{i+1}-t_i)^{\abs{\gamma}}  }{ {\abs{\gamma}}! }\\
&=e^{-\Delta_o(H)(t_{i+1}-t_i) } m_H(x_i, x_{i+1}, t_{i+1}-t_i).
\end{align*}
The proposition follows by multiplying these and using Lemma \ref{lemma:probgogood}.
\end{proof}

\section{Proof of Proposition \ref{prop:charf}}\label{sec:charf}
Throughout this section, $G$ denotes a graph containing distinct vertices $v$ and $w$ such that $t^*=t^*_G(v, w)<\infty$. We assume that there exists a permutation $\sigma$ of the (possibly countably infinite) vertex set of $G$ such that $(v, x)\cong (\sigma(x), w)$ and $(x, w)\cong (v, \sigma(x))$ for all $x\in G$. %We denote the corresponding graph automorphisms by $\varphi_x$ and $\psi_x$ respectively.

\begin{lemma}\label{lemma:indepofs}
For any $G$, $v$ and $w$ as above, the function $f_G^{vw}(s, t)$ as defined in \eqref{eq:fst} does not depend on $s$.
\end{lemma}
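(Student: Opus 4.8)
The plan is to use the probabilistic representation of $f_G^{vw}(s,t)$ furnished by Proposition \ref{prop:convisprob}, namely $f_G^{vw}(s,t) = \mathbb{E}\ln m_G(X_s, X_{s+t}, t)$, where $\{X_u\}_{u=0}^{t^*}$ is the conditioned random walk from $v$ to $w$ in time $t^*$. Since $t$ is held fixed, it suffices to show that the joint law of the pair $(X_s, X_{s+t})$ — or at least enough of it to determine $\mathbb{E}\ln m_G(X_s, X_{s+t}, t)$ — does not depend on $s$ for $0 \le s \le t^*-t$. By \eqref{eq:convisprob}, the probability that $(X_s, X_{s+t}) = (x,y)$ equals $m_G(v,x,s)\,m_G(x,y,t)\,m_G(y,w,t^*-s-t)$, so what must be shown is that $\sum_{x,y} m_G(v,x,s)\,m_G(x,y,t)\,m_G(y,w,t^*-s-t)\,\ln m_G(x,y,t)$ is independent of $s$.

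The idea is to differentiate in $s$ and show the derivative vanishes, using the symmetry hypothesis to cancel the two resulting terms. Formally, $\frac{\partial}{\partial s} m_G(v,x,s) = \sum_{x'} A_G(v,x')\, m_G(x',x,s)$ is just the heat-type equation $\frac{\partial}{\partial s} e^{sA_G} = A_G e^{sA_G} = e^{sA_G} A_G$ componentwise (the series can be differentiated termwise thanks to the uniform convergence in Proposition \ref{prop:unifconv} / Lemma \ref{lemma:mbounds}). Differentiating $f_G^{vw}(s,t)$ in $s$ therefore produces two families of terms, one coming from $\partial_s m_G(v,x,s)$ and one from $\partial_s m_G(y,w,t^*-s-t)$, each weighted by $m_G(x,y,t)\ln m_G(x,y,t)$. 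The automorphism $\varphi$ realizing $(v,x)\cong(\sigma(x),w)$ satisfies $m_G(v,x,s) = m_G(\sigma(x), w, s)$ and more generally $m_G(v, x', s) = m_G(\sigma(x'), w, s)$, and similarly $(x,w)\cong(v,\sigma(x))$ gives $m_G(x, y, t) = m_G(\sigma(x), \sigma(y), t)$ provided the two automorphisms are compatible — the cleanest route is to observe that $\sigma$ itself, composed appropriately, maps the "left half" of the walk bijectively onto the time-reversal of the "right half", so that re-indexing the sum $x \mapsto \sigma^{-1}(\text{something})$, $y\mapsto \cdots$ turns the first family of terms into (minus) the second. Concretely, I expect that substituting $x \leftrightarrow \sigma(y)$, $y \leftrightarrow \sigma(x)$ (or the correct analogue) together with $s \leftrightarrow t^*-s-t$ exhibits $f_G^{vw}(s,t) = f_G^{vw}(t^*-s-t, t)$, and then a second application, or a direct derivative computation, upgrades this reflection symmetry to full constancy in $s$.

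The main obstacle I anticipate is bookkeeping with the two separate automorphisms: the hypothesis gives $(v,x)\cong(\sigma(x),w)$ via one automorphism $\varphi_x$ and $(x,w)\cong(v,\sigma(x))$ via possibly a different automorphism $\psi_x$, and for the substitution to respect the middle factor $m_G(x,y,t)$ one needs these to interact correctly — essentially one wants a single automorphism (or a coherent family) intertwining the whole path measure with its reversal. I would handle this by working with the time-reversed conditioned walk: the reversal of $\{X_u\}$ is the conditioned walk from $w$ to $v$, and the hypothesis is exactly what is needed to identify the law of $\{\sigma(X_u)\}$ (suitably) with a time-shifted/reversed version, after which $\mathbb{E}\ln m_G(X_s, X_{s+t}, t)$ is manifestly $s$-independent because the increment-pair law is stationary. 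Once the stationarity is set up correctly the computation is short; getting the automorphisms to line up is the only delicate point, and uniform convergence from Proposition \ref{prop:unifconv} is what legitimizes all the termwise manipulations and interchanges of sum and derivative.
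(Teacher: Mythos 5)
Your starting point, the representation $f_G^{vw}(s,t)=\mathbb{E}\ln m_G(X_s,X_{s+t},t)$ from Proposition \ref{prop:convisprob}, is fine, but the mechanism you propose for killing the $s$-dependence does not hold up. The law of the pair $(X_s,X_{s+t})$ genuinely depends on $s$ even under the symmetry hypothesis: for the doubly infinite directed chain $\vec{\mathbb{Z}}$ with $v=0$, $w=1$ (which satisfies the hypothesis with $\sigma(x)=1-x$) one has $t^*=1$ and $\mathbb{P}(X_s=X_{s+t}=1)=s$, so there is no stationarity of the increment-pair law to appeal to; only the particular functional $\mathbb{E}\ln m_G(X_s,X_{s+t},t)$ is constant (here because $\ln m_G(0,0,t)=\ln m_G(1,1,t)=0$). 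Your fallback steps are also incomplete: the identity $m_G(x,y,t)=m_G(\sigma(x),\sigma(y),t)$ is not implied by the hypothesis, since $\sigma$ need not be induced by any single automorphism (you flag this, but the time-reversal fix rests on the stationarity that fails); a reflection symmetry $f_G^{vw}(s,t)=f_G^{vw}(t^*-s-t,t)$ does not upgrade to constancy in $s$ (a second application merely returns the original identity); and the proposed cancellation in a $\partial_s$-computation is never carried out. So the central step of the argument is missing.

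The paper's proof is short and purely algebraic, and the missing idea is this: for each \emph{fixed} $x$, apply the automorphism carrying $(x,w)$ to $(v,\sigma(x))$ and re-index the inner sum over $y$ by its image $y'$. Since \emph{both} arguments of the middle factor move under the same automorphism, $m_G(x,y,t)\ln m_G(x,y,t)$ becomes $m_G(v,y',t)\ln m_G(v,y',t)$, while $m_G(y,w,t^*-s-t)$ becomes $m_G(y',\sigma(x),t^*-s-t)$; the troublesome $x$-dependence is thus pushed entirely into the last factor. Then the first isomorphism gives $m_G(v,x,s)=m_G(\sigma(x),w,s)$, and writing $x'=\sigma(x)$ (a permutation of the vertex set) one obtains
\[
f_G^{vw}(s,t)=\sum_{x',y'\in G} m_G(v,y',t)\ln m_G(v,y',t)\, m_G(y',x',t^*-s-t)\, m_G(x',w,s),
\]
after which Proposition \ref{prop:convolution} collapses the sum over $x'$ to $m_G(y',w,t^*-t)$, eliminating $s$ entirely. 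No differentiation, time reversal, or stationarity claim is needed; the automorphism bookkeeping you worried about is resolved precisely by letting the automorphism depend on $x$ and re-indexing only the $y$-sum.
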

\begin{proof}
Writing $x'=\sigma(x)$ and $y'=\sigma(y)$, we have by the second isomorphism that, for any $x\in G$,
\begin{align*}
&\sum_{y\in G} m_G(x, y, t)\ln m_G(x, y, t) m_G(y, w, t^*-s-t)\\
&\qquad = \sum_{y'\in G} m_G(v, y', t)\ln m_G(v, y', t) m_G(y', x', t^*-s-t)
\end{align*}
Hence
\begin{align*}
f_G^{vw}(s, t)&= \sum_{x', y'\in G} m_G(v, y', t)\ln m_G(v, y', t) m_G(y', x', t^*-s-t)m_G(x', w, s)\\
&=\sum_{y'\in G} m_G(v, y', t)\ln m_G(v, y', t) m_G(y', w, t^*-t),
\end{align*}
where the last step follows from Proposition \ref{prop:convolution}.
\end{proof}

As $f_G^{vw}(s, t)$ does not depend on $s$, we now write $f_G^{vw}(t)$. What remains to show is that independence of $s$ implies that $f_G^{vw}(t)$ is convex in $t$. As $f_G^{vw}(0)=f_G^{vw}(t^*)=0$, this will imply  that $f_G^{vw}\leq 0$, as desired. We do this by interpreting this function in terms of entropy of the conditioned random walk. For a random variable $X$ taking values in a finite or countable set, its entropy is given by
\begin{equation}
\mathbb{H}(X) = -\sum_x \mathbb{P}\left( X=x\right) \ln \mathbb{P}(X=x).
\end{equation} 
Furthermore, if $Y$ is another random variable defined on the same probability space as $X$, the conditional entropy of $X$ given $Y$ is
\begin{equation}
\mathbb{H}\left( X \middle\vert Y\right) %= -\sum_{y} \mathbb{P}(Y=y) \sum_x \mathbb{P}(X=x\vert Y=y) \ln \mathbb{P}(X=x\vert Y=y).
 = \mathbb{H}\left( X, Y\right) - \mathbb{H}\left( Y \right),
\end{equation}
where $\mathbb{H}(X, Y)$ is the entropy of the joint random variable $(X, Y)$. See for instance Lemma 15.7.1 in \cite{AS08} for proofs of standard properties of entropy.

Let $\{X_t\}_{t=0}^{t^*}$ be the conditioned random walk on $G$ from $v$ to $w$ in time $t^*$ as defined in Section \ref{sec:basicprop}.  By Proposition \ref{prop:convisprob}, we have for any $0\leq a \leq b \leq t^*$ that
\begin{equation}
\mathbb{H}(X_b \vert X_a) = f_G^{vw}(t^*-a) - f_G^{vw}(b-a) - f_G^{vw}(t^*-b).
\end{equation}
As the conditioned random walk is Markovian, $\mathbb{H}(X_b\vert X_a)$ is decreasing in $a$ for $a\leq b$ and $b$ fixed. Hence, for any $t, \delta \geq 0$ such that $0 \leq t-\delta \leq t+\delta \leq t^*$, we have
\begin{align*}
0 &\leq \mathbb{H}(X_{t^*-\delta} \vert X_{t^*-t-\delta} ) - \mathbb{H}(X_{t^*-\delta} \vert X_{t^*-t} )\\
&= f_G^{vw}(t-\delta) + f_G^{vw}(t+\delta) - 2 f_G^{vw}(t).
\end{align*}
It follows that $f_G^{vw}(t)$ is midpoint-convex, hence, as it is continuous, it is convex, as desired.

%Hence differentiating in $a$, dividing by $c-b$ and letting $c\rightarrow b$ we get
%$$ \frac{\partial^2}{\partial s \partial t} f^G_{vw} - \frac{\partial^2}{\partial t^2} f^G_{vw} \leq 0.$$
%In particular, if $f^G_{vw}(s, t)$ is constant in $s$, then it is convex in $t$, as desired.

\begin{remark}\label{rem:strongconvexity} For any $G, v$ and $w$ as above it can be noted that
\begin{equation}
f_G^{vw}(\frac{t^*}{2} ) = - \frac{1}{2} \mathbb{H}\left( X_{\frac{t^*}{2}} \right),
\end{equation}
which is strictly negative as $X_{\frac{t^*}{2}}$ has non-trivial distribution. Combining this with $f_G^{vw}(0) = f_G^{vw}(t^*) = 0$ and convexity it follows that $f_G^{vw}(t) \leq -\Omega( t(t^*-t) )$ for $0\leq t \leq t^*$ in this case.
\end{remark}

\section{The critical time is not always sharp}\label{sec:notsharp}
The aim of this section is to prove the second part of Theorem \ref{thm:main} in the case of the standard exponential distribution. Our strategy will be based on the following observation:
\begin{proposition}\label{prop:propconv}
Let $H$ be a graph. For any vertices $v, w \in H$ and $s, t\geq 0$, we have
\begin{equation}
\mathbb{P}(T_H(v, w) \leq s+t) \leq \sum_{x\in H} \mathbb{P}(T_H(v, x) \leq s) \mathbb{P}(T_H(x, w) \leq t)
\end{equation}
\end{proposition}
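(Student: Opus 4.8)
The plan is to use the coupling between first-passage percolation with $\Exp(1)$ weights and the Richardson model, together with the Markov property of the latter.

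Run the Richardson model on $H$ from the single source $v$, coupled to first-passage percolation so that the set of infected vertices at time $u$ is exactly $B_u := \{x : T_H(v,x)\le u\}$; since $H$ has bounded degree, $B_u$ is a.s.\ finite for every $u$. The probability we must bound is $\mathbb{P}(T_H(v,w)\le s+t)=\mathbb{P}(w\in B_{s+t})$.

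Now condition on the infection history up to time $s$; in particular $B_s$ is measurable with respect to it. The crucial step is a Markov-type statement: conditionally on $B_s=A$, the evolution of the infection during $[s,s+t]$ has the same law as a \emph{fresh} Richardson model started from the whole set $A$ and run for time $t$. Indeed, a boundary edge $e=\{x,y\}$ with $x\in A$ and $y\notin A$ has been transmitting infection for a time $s-T_H(v,x)\ge 0$, and by memorylessness of the exponential its residual transmission time is again $\Exp(1)$, independently across boundary edges and independently of the still-untouched weights of edges outside $A$; hence the continued process is distributed exactly as a fresh restart from $A$ (the same works when $H$ has directed edges). Writing $B^A_t$ for the set infected at time $t$ by a Richardson model started from $A$, this gives
\[
\mathbb{P}\big(w\in B_{s+t}\ \big|\ B_s=A\big)=\mathbb{P}\big(w\in B^A_t\big).
\]
Since $w\in B^A_t$ means, in the coupled first-passage picture with a fresh weight family, that $T_H(x,w)\le t$ for some $x\in A$, a union bound gives $\mathbb{P}(w\in B^A_t)\le\sum_{x\in A}\mathbb{P}(T_H(x,w)\le t)$, where each summand is the (deterministic) law of the first-passage time from $x$ to $w$. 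Substituting $A=B_s$, taking expectations, and using Tonelli,
\begin{align*}
\mathbb{P}(T_H(v,w)\le s+t)
&\le \mathbb{E}\Big[\sum_{x\in B_s}\mathbb{P}(T_H(x,w)\le t)\Big]\\
&=\sum_{x\in H}\mathbb{P}(x\in B_s)\,\mathbb{P}(T_H(x,w)\le t)
=\sum_{x\in H}\mathbb{P}(T_H(v,x)\le s)\,\mathbb{P}(T_H(x,w)\le t),
\end{align*}
which is the assertion.

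The only delicate point — and the main obstacle — is the Markov step: one must check that conditioning on the infection history up to time $s$ leaves the boundary-edge weights, after subtracting their elapsed transmission time, distributed as independent $\Exp(1)$ variables, so that the continued infection is a genuine fresh restart from $B_s$ rather than something stochastically faster (which would reverse the needed inequality). This is the standard memorylessness computation, and in fact only the inequality $\mathbb{P}(w\in B_{s+t}\mid B_s=A)\le\mathbb{P}(w\in B^A_t)$ is required, which is immediate once one observes that the residual transmission times are $\Exp(1)$. Everything after this step is just a union bound and Tonelli; if one prefers to avoid conditioning on the infection history, the same conclusion follows by coupling with a second independent family of edge weights, but the Richardson formulation keeps the bookkeeping minimal.
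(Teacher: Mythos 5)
Your argument is correct and takes essentially the same route as the paper: both restart the Richardson model at time $s$ via memorylessness of the exponential weights and then decompose over the intermediate vertex $x$ infected at time $s$, bounding the continuation by $\mathbb{P}(T_H(x,w)\le t)$ and summing. The paper merely packages the conditioning differently, labelling each particle infected at time $s$ as a distinct type and noting that a type-$x$ infection reaches $w$ within the additional time $t$ with probability at most $\mathbb{P}(T_H(x,w)\le t)$, which is the same union bound you perform after conditioning on $B_s$ and applying Tonelli.
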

Note that the event that $T_H(v, w) \leq s+t$ is not contained in the event that there exists an $x$ such that $T_H(v, x)\leq s$ and $T_H(x, w) \leq t$.
\begin{proof}
Run the Richardson model with initial infected vertex $v$ for $s$ time. Consider each existing infectious particle at this time as a distinct type, labelled by its location, and run the model for additional time $t$. For each vertex $x\in H$, $w$ has a type $x$ infection at time $s+t$ if $x$ is infected at time $s$ and if this infection spreads to $w$ during the remaining time $t$. The probability of the former is clearly $\mathbb{P}(T_H(v, x) \leq s)$, and given the former the probability of the latter is at most $\mathbb{P}( T_H(x, w) \leq t)$. Note the inequality, because now the infection competes with other types.
\end{proof}

As a direct consequence of Proposition \ref{prop:propconv}, we have for any $s, t, u \geq 0$ that
\begin{equation}\label{eq:goodbound}
\begin{split}
&\mathbb{P}(T_{G^n}(\bar{v}, \bar{w}) \leq s+t+u )\\ &\qquad\leq 
\sum_{x, y\in G^n} \mathbb{P}(T_{G^n}(\bar{v}, x) \leq s ) \mathbb{P}(T_{G^n}(x, y) \leq t ) \mathbb{P}(T_{G^n}(y, \bar{w}) \leq u ).
\end{split}
\end{equation}
A natural way to bound this sum is to replace each factor in the right-hand side by the corresponding value of the generating function according to \eqref{eq:patmostm}, that is
\begin{equation}\label{eq:badbound}
\mathbb{P}(T_{G^n}(\bar{v}, \bar{w}) \leq s+t+u ) \leq \sum_{x, y\in G^n} m_{G^n}(\bar{v}, x, s) m_{G^n}(x, y, t) m_{G^n}(y, \bar{w}, u).
\end{equation}
However, according to Proposition \ref{prop:convolution}, this is equal to $m_{G^n}(\bar{v}, \bar{w}, s+t+u)$, so this bound in itself gives no improvement on \eqref{eq:mvvvwww}. On the other hand, what can happen and which then allows us to derive a better bound on the first-passage time is that for certain choices of $G, v, w$ most of the contribution to the right-hand side of \eqref{eq:badbound} comes from terms where $m_{G^n}(x, y, t)$ is much larger than one. This motivates us to instead use the bound
$$\mathbb{P}\left(T_{G^n}(x, y) \leq t \right) \leq \mathbb{P}\left(T_{G^n}(x, y) \leq t \right)^{1-\alpha} \leq m_{G^n}(x, y, t)^{1-\alpha}$$ for $\alpha>0$ sufficiently small. By Proposition \ref{prop:multiplicativity}, this implies that
\begin{align*}
\mathbb{P}(T_{G^n}(\bar{v}, \bar{w}) \leq s+t+u ) &\leq 
\sum_{x, y\in G^n} m_{G^n}(\bar{v}, x, s) m_{G^n}(x, y, t)^{1-\alpha} m_{G^n}(y, \bar{w}, u )\\
&= \left( \sum_{x, y \in G} m_G(v, x, s) m_G(x, y, t)^{1-\alpha} m_G(y, w, u) \right)^n.
\end{align*}
Pick $s$ and $t$ such that $f_G^{vw}(s, t)>0$ and let $u=t^*-s-t$. By Proposition \ref{prop:unifconv} and in particular \eqref{eq:gtaylor}, we have
\begin{equation}\label{eq:taylorofm2}
\sum_{x, y \in G} m_G(v, x, s) m_G(x, y, t)^{1-\alpha} m_G(y, w, t^*-s-t) = 1 - \alpha f_G^{vw}(s, t) + O(\alpha^2).
\end{equation}
Picking $\alpha$ sufficiently small so that this expression is strictly less than one it follows by continuity of this sum, as shown in Proposition \ref{prop:unifconv}, that there exists a $c>0$ such that $$\sum_{x, y \in G} m_G(v, x, s) m_G(x, y, t)^{1-\alpha} m_G(y, w, u) < 1$$ for any $t^*-s-t \leq u \leq t^*-s-t+c$. Hence $\mathbb{P}(T_{G^n}(\bar{v}, \bar{w}) \leq t^*+c)\rightarrow 0$ as $n\rightarrow\infty$.

% asymptotically almost surely as $n\rightarrow\infty$ we have $T_{G^n} > t^*+\varepsilon$, as desired. \qed

\section{The case where the critical time is sharp}\label{sec:sharp}
Let $G$ be a graph with distinct vertices $v$ and $w$ such that $t^*=t^*_G(v, w) < \infty$. The aim of this section is to prove the first part of Theorem \ref{thm:main} in the case of the standard exponential distribution.

The proof strategy is divided into two steps. We first show that if $f^{vw}_G(s, t) \leq 0$ everywhere, then $\mathbb{P}(T_{G^n}(\bar{v}, \bar{w}) \leq t^* + o(1))$ is bounded away from $0$ as $n\rightarrow\infty$. Second, we show that if, for some $t\geq 0$, $\mathbb{P}( T_{G^n}(\bar{v}, \bar{w} ) \leq t )$ is bounded away from zero as $n\rightarrow\infty$, then $\mathbb{E}T_{G^n}(\bar{v}, \bar{w}) \leq t+o(1)$. As $T_{G^n}(\bar{v}, \bar{w})$ is non-negative and asymptotically almost surely at least $t^*-o(1)$, this implies that $T_{G^n}(\bar{v}, \bar{w})\rightarrow t^*$ in probability and $L^1$ as $n\rightarrow\infty$, as desired.

Subsection \ref{sec:weakbound} shows the first step of this proof under the additional assumption that $f_G^{vw}(s, t)=-\Omega(t(t^*-t))$. As noted in Remark \ref{rem:strongconvexity}, this stronger assumption holds for all $G, v, w$ that satisfy Proposition \ref{prop:charf} and hence also Corollary \ref{cor:suffG}. In Subsection \ref{sec:recineq} we present a bootstrap argument which shows the second step of the proof. Finally, in Subsection \ref{sec:weightedgraphs} we show how the assumption of $f_G^{vw}(s, t)=-\Omega(t(t^*-t))$ in the first subsection can be relaxed to $f_G^{vw} \leq 0$ by considering a generalization to certain weighted graphs.

\subsection{A positive probability upper bound}\label{sec:weakbound}
%Let $G$ be a graph containing distinct vertices $v$ and $w$ such that $t^*=t^*_G(v, w)<\infty$. The aim of this subsection is to prove that $T_{G^n}(\bar{v}, \bar{w}) \leq t^*_G(v, w)$ with probability bounded away from $0$ as $n\rightarrow\infty$, under the condition that not only $f_G^{vw}(s, t)\leq 0$ for all $s, t$ but also $f_G^{vw}(s, t)=-\Omega(t(t^*-t))$. It can be noted that, by Remark \ref{rem:strongconvexity}, this stronger assumption holds for all $G, v, w$ that satisfy Proposition \ref{prop:charf} and hence also Corollary \ref{cor:suffG}. In Subsection \ref{sec:recineq} we will show that this type of positive probability upper bound proves convergence. We will describe in Subsection \ref{sec:weightedgraphs} how the condition that $f_G^{vw}(s, t)=-\Omega(t(t^*-t))$ can be circumvented. 

We will begin by proving a lower bound on the probability that $T_H(v, w)$ is at most $t^*_H(v, w)$ for any graph $H$ and vertices $v$ and $w$ in terms of the conditioned random walk on $H$.
\begin{proposition}\label{prop:Tubrw}
Let $H$ be a graph with distinct vertices $v$ and $w$ such that $t^*_H(v, w)<\infty$. Let $\{X_t\}_{t=0}^{t^*}$ be a conditioned random walk from $v$ to $w$ on $H$ in time $t^*=t^*_H(v, w)$. Let $L$ denote the number of steps the walker takes, and for any $1\leq i \leq L$, let $T_i$ denote the time of the $i$:th jump. To simplify notation we write $T_0=0$. Let
\begin{equation}
C(X) = \sum_{0\leq i<j\leq L} m_H(X_{T_i}, X_{T_j}, T_j-T_i).
\end{equation}
Then,
\begin{equation}\label{eq:Tubrw}
\mathbb{P}\left( T_H(v, w) \leq t^* \right) \geq \mathbb{E}\left[ \mathbbm{1}_{X \text{ is self-avoiding}}\,e^{-C(X)}\right].
\end{equation}
\end{proposition}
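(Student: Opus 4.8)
The plan is to couple first-passage percolation on $H$ with the conditioned random walk $\{X_t\}_{t=0}^{t^*}$ directly, so that whenever $X$ is self-avoiding, the path it traces is an admissible path in first-passage percolation, and its passage time is controlled. Concretely, I would construct the edge passage times $T_H(e)$ and the conditioned random walk on the same probability space as follows. For each edge $e$ traced by $X$ at time $T_i$, we would like to say that the passage time of that edge is at most $T_{i+1}-T_i$ — but the passage times are exponential, not bounded, so instead we should reveal the randomness more carefully. The cleanest approach: first sample the conditioned walk $X$; if it is self-avoiding it traces some self-avoiding path $\gamma = (e_1,\dots,e_L)$ with jump times $0 = T_0 < T_1 < \dots < T_L < t^*$. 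Then for the $j$-th edge $e_j$ of $\gamma$, let its passage time be an independent $\Exp(1)$ variable conditioned to be at most $T_j - T_{j-1}$, which happens (unconditionally) with probability $1 - e^{-(T_j-T_{j-1})} \ge (T_j-T_{j-1})e^{-(T_j-T_{j-1})} \cdot$ (something); and for all other edges of $H$ let the passage times be fresh independent $\Exp(1)$'s. Along $\gamma$ the total passage time is then at most $\sum_j (T_j - T_{j-1}) = T_L < t^*$, so $T_H(v,w) \le t^*$ on this event.

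The subtlety is that the edges of $\gamma$ must be distinct for this ``set the passage time on each edge once'' construction to make sense — this is exactly where the indicator $\mathbbm{1}_{X\text{ is self-avoiding}}$ enters. So conditionally on $X$ being self-avoiding and tracing $\gamma$ with jump times $T_1,\dots,T_L$, the probability (over the edge weights) that $T_H(\gamma) \le t^*$ is at least $\prod_{j=1}^L \mathbb{P}(\Exp(1) \le T_j - T_{j-1})$. Now I need to turn this product of ``gap survival'' probabilities into the quantity $e^{-C(X)}$. Here is the key computation: by Proposition \ref{prop:infprob}, conditional on $X$ tracing $\gamma$, the jump times $(T_1,\dots,T_L)$ are distributed as the order statistics of $L$ i.i.d.\ uniforms on $[0,t^*]$. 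I would therefore want to integrate $\prod_j (1 - e^{-(T_j - T_{j-1})})$ — or a lower bound for it — against this density, and compare the result to $\mathbb{E}[e^{-C(X)} \mid X \text{ traces }\gamma]$, which by Proposition \ref{prop:convisprob} and \ref{prop:infprob} is an integral of $e^{-\sum_{i<j} m_H(X_{T_i},X_{T_j},T_j-T_i)}$ against the same uniform-order-statistics density. So it suffices to show, pointwise in the jump times, that $\prod_{j=1}^L (1 - e^{-(T_j-T_{j-1})}) \ge e^{-\sum_{0\le i<j\le L} m_H(X_{T_i},X_{T_j},T_j-T_i)}$.

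To prove that pointwise inequality I would use $1 - e^{-x} \ge x e^{-x} \cdot$ — no: better to use $1 - e^{-x} = x\,\mathbb{E}[\text{something}]$... Actually the natural route is: $\mathbb{P}(\Exp(1)\le x) = 1 - e^{-x} \ge \frac{x}{e^x - 1}\cdot$ hmm. Let me instead aim for $\ln(1-e^{-x}) \ge -e^{-x}/(1-e^{-x})$? The cleaner target, matching the paper's earlier estimates like Proposition \ref{prop:sumofexp1}, is likely $1-e^{-x} \ge e^{-h(x)}$ for a suitable $h$, and then one wants $\sum_j h(T_j - T_{j-1}) \le \sum_{i<j} m_H(X_{T_i},X_{T_j},T_j-T_i)$; the right side dominates the $j = i+1$ diagonal terms $\sum_j m_H(X_{T_{j-1}}, X_{T_j}, T_j - T_{j-1})$, and since $X$ traces a single edge in the gap $[T_{j-1},T_j]$, $m_H(X_{T_{j-1}},X_{T_j},T_j-T_{j-1}) \ge (T_j - T_{j-1})$ (that edge contributes the term $\frac{t^1}{1!}$), so it would be enough that $h(x) \le x$ for all $x \ge 0$, i.e.\ $1 - e^{-x}\ge e^{-x}$, which fails for small $x$. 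So the diagonal alone is not enough and one genuinely needs the off-diagonal terms $C(X)$ — that is the point of defining $C(X)$ with all pairs $i<j$, not just consecutive ones. I expect the main obstacle to be precisely this analytic step: showing $\prod_j(1-e^{-(T_j-T_{j-1})}) \ge e^{-C}$ where $C$ is the full double sum, presumably by a clever pairing/telescoping using $m_H(x,z,s+t)\ge \sum_y m_H(x,y,s)m_H(y,z,t)$ — wait, that's an equality (Proposition \ref{prop:convolution}) — so more likely using that $m_H(X_{T_i},X_{T_j},T_j-T_i) \ge \frac{(T_j-T_i)^{j-i}}{(j-i)!}$ since $X$ traces a length-$(j-i)$ path between those endpoints, and then summing $\sum_{i<j}\frac{(T_j-T_i)^{j-i}}{(j-i)!}$ and bounding it below by something that dominates $\sum_j(-\ln(1-e^{-(T_j-T_{j-1})}))$. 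I would set up the pointwise inequality, reduce everything via Propositions \ref{prop:infprob} and \ref{prop:convisprob} to this deterministic statement about positive reals $0=T_0<\dots<T_L<t^*$, and grind that out; the coupling and the reductions are routine once the right deterministic inequality is identified.
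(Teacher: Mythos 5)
Your plan founders at exactly the step you flag as ``grind that out'': the deterministic inequality $\prod_{j=1}^L\bigl(1-e^{-(T_j-T_{j-1})}\bigr)\geq e^{-C(X)}$ is false, and not just in exotic cases. Take $H$ to be a single directed edge from $v$ to $w$, so $t^*=1$, $L=1$, and $C(X)=m_H(v,w,T_1)=T_1$ with no off-diagonal terms at all; for a small jump time, say $T_1=0.1$, the left side is $1-e^{-0.1}\approx 0.095$ while the right side is $e^{-0.1}\approx 0.905$. Averaging over the jump-time law (uniform on $[0,1]$ by Proposition 4.6) does not save it either: your quantity is $\mathbb{E}[1-e^{-T_1}]=e^{-1}$, whereas the proposition claims the larger bound $\mathbb{E}[e^{-T_1}]=1-e^{-1}$, which is in fact the exact value of $\mathbb{P}(T_H(v,w)\leq 1)$ here. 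The structural reason is that your coupling demands each edge weight fit inside its own inter-jump gap, which conditionally on $X$ tracing $\gamma$ has probability roughly $\prod_j(T_j-T_{j-1})$ --- exponentially small in $L$ --- while $e^{-C(X)}$ is typically of constant order (indeed the later Propositions 4.8--4.9 are devoted to showing $C(X)=O(1)$ with good probability). So no amount of off-diagonal help can close the gap; the off-diagonal terms only make $e^{-C}$ smaller, but the left side is already far too small because of the diagonal restriction itself. Your bound $\mathbb{P}(T_H(v,w)\leq t^*)\geq\mathbb{E}[\mathbbm{1}_{X\text{ s.a.}}\prod_j(1-e^{-(T_j-T_{j-1})})]$ is a correct but strictly weaker statement, insufficient for the applications.

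The missing idea is that $e^{-C(X)}$ is not a price for the traced path being fast; it is a price for the traced path being the \emph{unique geodesic}. The paper fixes a self-avoiding path $\gamma$, conditions on its edge weights $t_1,\dots,t_{\abs{\gamma}}$ with $\sum_i t_i\leq t^*$, and lower-bounds the conditional probability that no ``shortcut'' (a disjoint self-avoiding detour between two vertices of $\gamma$ that is at least as fast as the corresponding segment) exists, using the FKG inequality: this probability is at least $\prod_{\gamma'}(1-\mathbb{P}(\gamma'\text{ shortcut}))$, and the expected number of shortcuts is at most $C_H(\gamma,t_1,\dots,t_{\abs{\gamma}})=\sum_{i<j}m_H(v_i,v_j,t_{i+1}+\dots+t_j)$, which is what becomes $C(X)$. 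Because the events ``$\gamma$ is the unique geodesic and $T_H(\gamma)\leq t^*$'' are disjoint over $\gamma$, one may \emph{sum} these lower bounds over all self-avoiding paths --- this is how a large sum legitimately sits below the probability of the single event $\{T_H(v,w)\leq t^*\}$, rather than via conditioning on an independently sampled walk. A final trick (replacing $H$ by a graph with $k$ parallel copies of each edge and letting $k\to\infty$) removes the constants $\frac{t^*}{1-e^{-t^*}}$ and the density factor $e^{-t_1-\dots-t_{\abs{\gamma}}}$, after which Proposition 4.6 identifies the resulting sum of integrals as $\mathbb{E}[\mathbbm{1}_{X\text{ s.a.}}e^{-C(X)}]$. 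Your reductions via Propositions 4.6 and 4.7 and the use of the uniform order statistics are fine as bookkeeping, but without the unique-geodesic/FKG mechanism the stated inequality is out of reach.
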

\begin{proof}
A self-avoiding path $\gamma$ from $v$ to $w$ in $H$ is a geodesic if $T_H(\gamma) = T_H(v, w)$. We say that $\gamma$ is the unique geodesic from $v$ to $w$ if $T_H(\gamma') > T_H(v, w)$ for all $\gamma' \in \Gamma^{as}_H(v, w)\setminus\{\gamma\}$.

Pick any $\gamma \in \Gamma^{sa}_H(v, w)$. We denote the vertices along $\gamma$ by $v=v_0, v_1, \dots, v_{\abs{\gamma}}=w$ and the edges by $e_1, \dots, e_{\abs{\gamma}}$. Let $t_1, \dots, t_{\abs{\gamma}} \geq 0$ such that $t_1+\dots +t_{\abs{\gamma}} \leq t^*$ and condition on the event that $T_H(e_i) = t_i$ for all $1\leq i \leq \abs{\gamma}$. We derive a lower bound on the probability that $\gamma$ is the unique geodesic from $v$ to $w$.

By some straight-forward combinatorial reasoning, one sees that if $\gamma$ is not a unique geodesic, then there must exist a shortcut $\gamma'$ of $\gamma$, that is, a self-avoiding path $\gamma'$ from $v_i$ to $v_j$ for some $i<j$ such that $\gamma'$ shares no edges with $\gamma$ and $T_H(\gamma')\leq t_{i+1}+\dots+t_j$. Note that by this definition a shortcut need not be strictly faster than the corresponding segment in $\gamma$, just as fast, though the probability that equality holds for any $\gamma'$ is clearly $0$. The event that a certain path $\gamma'$ is not a shortcut is clearly increasing with respect to the passage times of edges in $H$ which are not in $\gamma$. Hence, by the FKG inequality, see Subsection 2.2 in \cite{Grimmett},
\begin{equation}\label{eq:gammauniquegeo}
\begin{split}
&\mathbb{P}\left( \gamma \text{ unique geodesic}\middle\vert T_H(e_i)=t_i, 1\leq i \leq \abs{\gamma}\right)\\
&\qquad \geq \prod_{\gamma'}\left(1-\mathbb{P}\left(\gamma'\text{ shortcut}\middle\vert T_H(e_i)=t_i, 1\leq i \leq \abs{\gamma}\right)\right).
\end{split}
\end{equation}
We remark that \cite{Grimmett} only states this inequality for finite collections of events, but this immediately extends to countable collections as here by continuity of the probability measure.

Since $m_H(x, y, t)$ is an upper bound on the expected number of self-avoiding paths from $x$ to $y$ with passage time at most $t$, it follows that
\begin{equation}\label{eq:expshortcuts}
\begin{split}
&\sum_{\gamma'} \mathbb{P}\left( \gamma'\text{ shortcut}\middle\vert T_H(e_i)=t_i, 1\leq i \leq \abs{\gamma}\right)\\
&\qquad \leq \sum_{0\leq i < j \leq \abs{\gamma}} m_H(v_i, v_j, t_{i+1}+\dots +t_j) := C_H(\gamma, t_1, \dots, t_{\abs{\gamma}}).
\end{split}
\end{equation}

Now, for any specific $\gamma'$, it is easily seen that the probability of it being a shortcut is at most the probability that an $\Exp(1)$ random variable is at most $t^*$, which is $1-e^{-t^*}$. Using the elementary inequality $1-p \geq e^{\frac{\ln(1-\varepsilon)}{\varepsilon} p}$ for $0 \leq p \leq \varepsilon$, it follows from \eqref{eq:gammauniquegeo} and \eqref{eq:expshortcuts} that
\begin{equation}
\begin{split}
&\mathbb{P}\left( \gamma \text{ unique geodesic}\middle\vert T_H(e_i)=t_i, 1\leq i \leq \abs{\gamma}\right)\\
&\qquad \geq \exp\left( -\frac{t^*}{1-e^{-t^*}} C_H(\gamma, t_1, \dots, t_{\abs{\gamma}}) \right).
\end{split}
\end{equation} 
Taking the expected value over the passage times along $\gamma$ and summing over all self-avoiding paths from $v$ to $w$, we obtain
\begin{equation}\label{eq:upperboundstrangeconstants}
\begin{split}
&\mathbb{P}\left( T_H(v, w) \leq t^*\right)\\
&\qquad \geq \sum_{\gamma \in \Gamma^{sa}_H(v, w)} \idotsint\limits_{\sum_i t_i \leq t^*} e^{- \frac{t^*}{1-e^{-t^*}} C_H(\gamma, t_1, \dots, t_{\abs{\gamma}}) -t_1-\dots -t_{\abs{\gamma}}}\,dt_1\dots\,dt_{\abs{\gamma}}.
\end{split}
\end{equation}

Let us now use a small trick in order to simplify this expression. Let $k$ be a large integer, and define $H'$ as the graph with the same vertex set as $H$ and which contains $k$ copies of each edge in $H$. It is easily seen that:
\begin{enumerate}[label=\roman*)]
\item $T_H(v, w)$ has the same distribution as $k T_{H'}(v, w)$
\item for each path $\gamma$ in $\Gamma_H(x, y)$ there are $k^{\abs{\gamma}}$ copies in $\Gamma_{H'}(x, y)$
\item $m_H(x, y, t) = m_{H'}(x, y, t/k)$
\item $t^*_{H'}(v, w) = t^*/k$.
\end{enumerate}
Applying \eqref{eq:upperboundstrangeconstants} to $H'$, it follows that
\begin{align*}
&\mathbb{P}\left( T_H(v, w) \leq t^* \right)\\
&\geq \sum_{\gamma \in \Gamma^{sa}_{H'}(v, w)}\; \idotsint\limits_{\sum_i t_i \leq t^*/k} e^{ -\frac{ t^*/k }{ 1- e^{-t^*/k}} C_{H'}(\gamma, t_1, \dots, t_{\abs{\gamma}})-t_1-\dots- t_{\abs{\gamma}}}\,dt_1\dots\,dt_{\abs{\gamma}}\\
&=\sum_{\gamma \in \Gamma^{sa}_{H}(v, w)} k^{\abs{\gamma}} \idotsint\limits_{\sum_i t_i \leq t^*/k} e^{ -\frac{ t^*/k }{ 1- e^{-t^*/k}} C_H(\gamma, kt_1, \dots, k t_{\abs{\gamma}})-t_1-\dots- t_{\abs{\gamma}}}\,dt_1\dots\,dt_{\abs{\gamma}}\\
&=\sum_{\gamma \in \Gamma^{sa}_{H}(v, w)} \idotsint\limits_{\sum_i t_i \leq t^*} e^{ -\frac{ t^*/k }{ 1- e^{-t^*/k}} C_H(\gamma, t_1, \dots, t_{\abs{\gamma}})-t_1/k-\dots- t_{\abs{\gamma}}/k}\,dt_1\dots\,dt_{\abs{\gamma}}.
\end{align*}
Letting $k\rightarrow\infty$, we obtain the final bound
\begin{equation}
\mathbb{P}\left(T_H(v, w) \leq t^*\right) \geq \sum_{\gamma\in\Gamma^{sa}_H(v, w)}\idotsint\limits_{\sum_i t_i \leq t^*} e^{-C_H(\gamma, t_1, \dots, t_{\abs{\gamma}})}\,dt_1\dots\,dt_{\abs{\gamma}}.
\end{equation}
The proposition follows from Proposition \ref{prop:infprob} by identifying the right-hand side in terms of $\{X_t\}_{t=0}^{t^*}$.
\end{proof}

Pick any $G, v$ and $w$ such that $t^*_G(v, w)< \infty$. Below, we let $\{X_t\}_{t=0}^{t^*}$ be the conditioned random walk on $G^n$ from $\bar{v}$ to $\bar{w}$ in time $t^*=t^*_G(v, w)$. In accordance with the above proposition, we let $L$ denote the number of steps taken by $X_t$, let $T_1, \dots, T_L$ be the times of these steps, and let $T_0=0$. We denote the coordinates of $X_t$ by $X_t^1, \dots, X_t^n$. Recall that each coordinate is an independent copy of $\{Y_t\}_{t=0}^{t^*}$, the conditioned random walk from $v$ to $w$ on $G$ in time $t^*$. We begin by making the following observation about $Y_t$:
\begin{lemma}\label{lemma:spread}
For $s, t\geq 0$ such that $s+t\leq t^*$ we have that
\begin{equation}
\mathbb{P}\left( Y_s\neq Y_{s+t}\right) = \Omega(t),
\end{equation}
hence  $\mathbb{P}\left( Y_s=Y_{s+t}\right) \leq e^{-\Omega(t)}$.
\end{lemma}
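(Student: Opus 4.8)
The plan is to pin down an event of probability bounded away from $0$ --- uniformly in $s$ and $t$, and in fact scaling like $t$ --- on which $Y$ is \emph{forced} to occupy two different vertices at times $s$ and $s+t$. Since $v\neq w$ and there is a path from $v$ to $w$, I would fix a geodesic path $\gamma_0$ from $v$ to $w$ in $G$ and put $d=\dist_G(v,w)\geq 1$ for its length; a geodesic is automatically self-avoiding, and it is exactly this self-avoidance that makes the argument work.

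The first step is to apply Proposition \ref{prop:infprob} to the conditioned random walk $Y=\{Y_r\}_{r=0}^{t^*}$ from $v$ to $w$ on $G$. This yields $\mathbb{P}(Y\text{ traces }\gamma_0)=(t^*)^d/d!>0$ and, conditionally on that event, that the $d$ jump times of $Y$ are distributed as the order statistics of $d$ independent $\unif[0,t^*]$ variables $U_1,\dots,U_d$. On the event that $Y$ traces $\gamma_0$, since $\gamma_0$ is self-avoiding the value $Y_r$ is a function of the number of jumps made by time $r$ and is injective in that number; hence, up to a null set, $\{Y_s=Y_{s+t}\}$ is precisely the event that no jump of $Y$ falls in $(s,s+t]$. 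Because $0\leq s$ and $s+t\leq t^*$, this interval lies in $[0,t^*]$ and has length $t$, so conditionally the probability that none of $U_1,\dots,U_d$ lands in it is exactly $(1-t/t^*)^d$.

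Putting these together gives
\begin{equation*}
\mathbb{P}(Y_s\neq Y_{s+t})\;\geq\;\frac{(t^*)^d}{d!}\Bigl(1-(1-t/t^*)^d\Bigr)\;\geq\;\frac{(t^*)^d}{d!}\bigl(1-e^{-t/t^*}\bigr)\;\geq\;\frac{(1-e^{-1})(t^*)^{d-1}}{d!}\,t,
\end{equation*}
using $(1-x)^d\leq e^{-dx}\leq e^{-x}$ and then concavity of $u\mapsto 1-e^{-u}$ on $[0,1]$; the last right-hand side is $\Omega(t)$ with a constant depending only on $G,v,w$. For the ``hence'' part, note that $(t^*)^d/d!\leq m_G(v,w,t^*)=1$, so the constant above times $t$ never exceeds $1$, and therefore $\mathbb{P}(Y_s=Y_{s+t})=1-\mathbb{P}(Y_s\neq Y_{s+t})\leq 1-\Omega(t)\leq e^{-\Omega(t)}$ via $1-x\leq e^{-x}$.

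I expect the only delicate point to be the reduction in the middle paragraph: one has to check cleanly that, conditioned on $Y$ tracing a self-avoiding path, the event $\{Y_s=Y_{s+t}\}$ is literally the event that the uniform order statistics avoid the interval $(s,s+t]$, and that Proposition \ref{prop:infprob} really does supply this conditional law. Everything after that is a one-line estimate, and the resulting bound is visibly uniform over all admissible $s$, which is the form needed in the remainder of this section.
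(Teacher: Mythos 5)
Your argument is correct, and it takes a genuinely different route from the paper's. The paper works with the two-time marginals of $Y$: by Proposition \ref{prop:convisprob} it writes $\mathbb{P}(Y_s=x,\,Y_{s+t}=y)=m_G(v,x,s)\,m_G(x,y,t)\,m_G(y,w,t^*-s-t)$, sums this over consecutive pairs $(v_k,v_{k+1})$ of a shortest path to get the lower bound $t(t^*-t)^{l-1}/(l-1)!$, and then — because that bound degenerates as $t\to t^*$ — adds a second estimate $\mathbb{P}(Y_s=v,\,Y_{s+t}=w)\geq m_G(v,w,t)$, splitting into the cases $t\leq t^*/2$ and $t>t^*/2$. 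You instead condition on the walk tracing a single self-avoiding geodesic and use Proposition \ref{prop:infprob}: the tracing probability $(t^*)^d/d!$ together with the fact that, given tracing, the jump times are the order statistics of $d$ independent $\unif[0,t^*]$ variables (this is exactly what the density statement $dt_1\cdots dt_{|\gamma|}$ encodes, and it is stated explicitly in the proof of that proposition, so the delicate point you flag is indeed covered; self-avoidance of the geodesic makes $\{Y_s=Y_{s+t}\}$ coincide, on the tracing event, with no jump landing in $(s,s+t]$). This buys you a single bound $\frac{(1-e^{-1})(t^*)^{d-1}}{d!}\,t$ valid uniformly on $0\leq t\leq t^*$ with no case-splitting, at the cost of discarding all paths except one geodesic; the paper's bound retains more of the path structure (which is in the same spirit as the computations it reuses later) but needs the two-case patch. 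Your derivation of the ``hence'' part via $1-x\leq e^{-x}$ is also fine — note that this inequality holds for all $x$, so the remark that the constant times $t$ stays below $1$ is not even needed.
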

\begin{proof}
Let $v=v_0, v_1, \dots, v_l=w$ be a shortest path from $v$ to $w$ in $G$. Then
\begin{align*}
\mathbb{P}\left( Y_s \neq Y_{s+t} \right) &\geq \sum_{k=0}^{l-1} \mathbb{P}\left( Y_s = v_k, Y_{s+t}=v_{k+1}\right)\\
&= \sum_{k=0}^{l-1} m_G(v, v_k, s) m_G(v_k, v_{k+1}, t) m_G(v_{k+1}, v_l, t^*-s-t)\\
&\geq \sum_{k=0}^{l-1} \frac{s^{k}}{k!} t \frac{(t^*-s-t)^{l-k-1}}{(l-k-1)!}\\
&= \frac{t (t^*-t)^{l-1}}{(l-1)!}.
\end{align*}
We further have
\begin{align*}
\mathbb{P}(Y_s\neq Y_{s+t}) &\geq \mathbb{P}\left(Y_s=v, Y_{s+t}=w\right)\\
&= m_G(v, v, s) m_G(v, w, t) m_G(w, w, t^*-s-t)\\
&\geq m_G(v, w, t).
\end{align*}
The lemma follows by using the first bound for, say, $t\leq t^*/2$, and the second for $t>t^*/2$.
\end{proof}

Recall that $\Gamma_H(v, w)$ and $\Gamma^{sa}_H(v, w)$ denote the set of all paths from $v$ to $w$ in $H$ and the set of self-avoiding such paths respectively. 
\begin{proposition}\label{prop:selfavoid}
We have
\begin{equation}\label{eq:selfavoid}
\sum_{\gamma \in \Gamma^{sa}_{G^n}(\bar{v}, \bar{w})} \frac{(t^*)^{\abs{\gamma}}}{\abs{\gamma}!} = \mathbb{P}\left(X \text{ self-avoiding}\right) \geq \left(1-O(n^{-1})\right)e^{-(\Delta_o t^*)^2}
\end{equation}
where $\Delta_o=\Delta_o(G)$ denotes the maximal out-degree of any vertex in $G$.
\end{proposition}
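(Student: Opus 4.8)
The plan is to analyze $X=(X^1,\dots,X^n)$, the conditioned random walk on $G^n$ from $\bar v$ to $\bar w$ in time $t^*$, and estimate the probability that it is self-avoiding. The first equality in \eqref{eq:selfavoid} is immediate: by Proposition \ref{prop:infprob}, for each path $\gamma\in\Gamma_{G^n}(\bar v,\bar w)$ we have $\mathbb P(X\text{ traces }\gamma)=(t^*)^{\abs\gamma}/\abs\gamma!$, and $X$ is self-avoiding precisely when the path it traces is self-avoiding, so summing over $\gamma\in\Gamma^{sa}_{G^n}(\bar v,\bar w)$ gives the stated identity. The substance is the lower bound on $\mathbb P(X\text{ self-avoiding})$.

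First I would record the structure of a non-self-avoiding trajectory: $X$ fails to be self-avoiding iff it visits some vertex of $G^n$ at two distinct times $T_i<T_j$ with $X_{T_i}=X_{T_j}$, equivalently $X^c_{T_i}=X^c_{T_j}$ for every coordinate $c\in\{1,\dots,n\}$. A key point, using the coordinate-wise independence of $X$ noted in Section \ref{sec:basicprop}, is that between two consecutive steps of the global walk exactly one coordinate moves; more usefully, the global walk makes $L$ steps (where $L$ has the distribution of a $\Poiss(\Delta_o n t^*)$ variable conditioned on the walk succeeding — though I will instead work directly with coordinates). The clean approach is a union bound over ordered pairs of step times of a single coordinate walk. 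Fix a coordinate $c$ and condition on its jump times $0<\tau_1<\dots<\tau_m<t^*$; for the trajectory $X$ to revisit a vertex, there must be times $0\le a<b\le t^*$ with $X_a=X_b$, and then in particular $X^c_a=X^c_b$. I would bound $\mathbb P(X\text{ not self-avoiding})$ by summing, over pairs of times $(s,s+t)$ at which the global walk can agree in a coordinate, a product over coordinates of the agreement probabilities $\mathbb P(Y_s=Y_{s+t})$, exploiting independence and Lemma \ref{lemma:spread} which gives $\mathbb P(Y_s=Y_{s+t})\le e^{-\Omega(t)}$.

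Concretely, I would proceed as follows. Condition on the realized trajectory having self-intersection: there exist $0\le i<j\le L$ with $X_{T_i}=X_{T_j}$. Between times $T_i$ and $T_j$ the global walk takes $j-i$ steps, each in one of the $n$ coordinates; call $t=T_j-T_i$. For each coordinate $c$, conditionally on the coordinate decomposition, $X^c$ evolves as the conditioned walk $Y$ on $G$, and the events $\{X^c_{T_i}=X^c_{T_j}\}$ are independent across $c$ given the jump-time data. Summing over the pair $(i,j)$, over which coordinates received the $j-i$ intermediate steps, and integrating over step times, the probability of self-intersection is bounded (after using Proposition \ref{prop:infprob} to pass to Lebesgue integration over ordered step times) by an expression of the form
\[
\sum_{i<j}\;\idotsint\limits_{0<T_i<T_j<t^*}\ \prod_{c=1}^n \mathbb P\!\left(Y\text{ agrees at the endpoints of the }c\text{-portion}\right)\ \ldots
\]
and the factor for each coordinate that actually moves is $\le e^{-\Omega(\text{(its active time)})}$ by Lemma \ref{lemma:spread}, while coordinates that never move in $[T_i,T_j]$ contribute exactly $1$. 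Since the total active time across coordinates between $T_i$ and $T_j$ is at least $t=T_j-T_i$ minus a contribution that vanishes as the time-mesh refines, I expect the coordinate product to be bounded by something like $e^{-\Omega(t)}$ uniformly; combined with the fact that there are $O((\Delta_o n t^*)^2)$ expected pairs $(i,j)$ but each contributing only when the intermediate active time is short, the total probability of self-intersection is $O(n^{-1})$ plus the correction term $(\Delta_o t^*)^2$ coming from pairs that are "close" in time. Pulling the $e^{-(\Delta_o t^*)^2}$ out as in the statement, this yields $\mathbb P(X\text{ self-avoiding})\ge(1-O(n^{-1}))e^{-(\Delta_o t^*)^2}$.

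The main obstacle I anticipate is bookkeeping the coordinate decomposition cleanly: one must correctly account for the fact that a self-intersection of the global walk forces \emph{simultaneous} return in every coordinate, so the naive union bound over a single coordinate is far too weak, and one needs the product over all $n$ coordinates to get the $e^{-(\Delta_o t^*)^2}$ saving and the $O(n^{-1})$ error. Getting the exponent to be exactly $(\Delta_o t^*)^2$ (rather than merely $O(1)$) will require carefully matching the $\Poiss(\Delta_o n t^*)$ step count, the $1/n$ probability that a given later step lands in a previously-used coordinate, and the $e^{-\Delta_o t^*}$ success probability from Lemma \ref{lemma:probgogood}; I would expect the cleanest route is to bound $\mathbb P(X\text{ not self-avoiding})$ directly by $\binom{\Delta_o n t^*}{2}\cdot\frac1n\cdot$ (probability the two coordinate-walk segments coincide), the $\frac1n$ being the chance a later step reuses an earlier coordinate, and to check that the residual combinatorial factor telescopes to $(\Delta_o t^*)^2/2$ in the limit, with lower-order terms absorbed into $O(n^{-1})$.
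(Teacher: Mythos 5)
The first equality and the general shape of your $O(n^{-1})$ estimate (coordinate independence, Lemma \ref{lemma:spread} giving $\mathbb{P}(Y_s=Y_{s+t})\leq e^{-\Omega(t)}$, and a factor $1/n$ for a later step landing in an already-used coordinate) are in the right spirit, but your route to the factor $e^{-(\Delta_o t^*)^2}$ has a genuine gap. You try to get the whole bound from a first-moment/union bound on $\mathbb{P}(X\text{ not self-avoiding})$, hoping the "close pairs" contribute an error of size $(\Delta_o t^*)^2$ which can then be "pulled out" as $e^{-(\Delta_o t^*)^2}$. This cannot work: the event that $X$ is not self-avoiding is \emph{not} rare. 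With probability bounded away from zero (in fact with probability tending to a constant), some coordinate makes an immediate back-and-forth move while no other coordinate changes, producing a length-$2$ detour and hence a revisited vertex of $G^n$; the expected number of such self-intersections is of order $(\Delta_o t^*)^2$ and may well exceed $1$. A union bound then gives at best $\mathbb{P}(X\text{ self-avoiding})\geq 1-C(\Delta_o t^*)^2-O(n^{-1})$, which is strictly weaker than the claimed $\left(1-O(n^{-1})\right)e^{-(\Delta_o t^*)^2}$ and is vacuous whenever the expectation exceeds $1$. There is no mechanism in your sketch that converts an additive error of size $(\Delta_o t^*)^2$ into a multiplicative factor $e^{-(\Delta_o t^*)^2}$.

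The paper's proof works around exactly this obstruction by splitting the problem. It first shows, by the kind of coordinate-product estimate you describe, that $X$ is \emph{almost} self-avoiding (revisits only of the form $v_i=v_j$ with $\abs{i-j}\leq 2$) with probability $1-O(n^{-1})$; only long-range revisits are rare, and your union-bound machinery is appropriate for those. The factor $e^{-(\Delta_o t^*)^2}$ then comes not from a probability estimate but from a deterministic generating-function comparison: every almost self-avoiding path is obtained from a unique self-avoiding $\gamma$ by inserting length-$2$ detours, of which there are at most $n\Delta_o^2$ choices at each of the $\abs{\gamma}+1$ positions, and since $\abs{\gamma}\geq n$ the total weight $\sum (t^*)^{\abs{\gamma'}}/\abs{\gamma'}!$ of all such extensions is at most $e^{(\Delta_o t^*)^2}$ times the weight of $\gamma$ itself. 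Combining $\sum_{\gamma\in\Gamma^{asa}}(t^*)^{\abs{\gamma}}/\abs{\gamma}!=1-O(n^{-1})$ with this comparison yields the stated bound. Your proposal is missing this separation of constant-probability length-$2$ detours (handled combinatorially) from $O(n^{-1})$-probability long-range intersections (handled probabilistically), and without it the claimed inequality does not follow.
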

\begin{proof}
Note that the first equality in \eqref{eq:selfavoid} is a direct consequence of \eqref{eq:probtracepath} and Proposition \ref{prop:multiplicativity}.

We say that a path given by the vertex sequence $v_0, v_1, \dots, v_l$ is almost self-avoiding if $v_i=v_j$ only if $\abs{i-j} \leq 2$. We will start by showing that the probability that $X_t$ is almost self-avoiding tends to one as $n\rightarrow\infty$.

Let $\bar{v}=v_0, v_1, \dots, v_L=\bar{w}$ denote the vertices along the path traced by $X_t$. Suppose that this path is not almost self-avoiding, that is there exist $i, j$ satisfying $j\geq i+3$ such that $v_j=v_i$. As $G$ contains no loops, there are only three possible ways this can occur:
\begin{itemize}
\item The random walker changes the same coordinate at times $T_{i+1}$ and $T_j$, and this coordinate has been changed at least once more during $(T_{i+1}, T_j)$. All other coordinates are the same at times $T_{i+1}$ and $T_j$. 
\item The random walker changes the same coordinate at times $T_{i+1}$ and $T_j$, and there is another coordinate that has been changed at least twice during $(T_{i+1}, T_j)$. All other coordinates are the same at times $T_{i+1}$ and $T_j$.
\item The random walker changes different coordinates at times $T_{i+1}$ and $T_j$, both of which have changed at least once more during $(T_{i+1}, T_j)$. All other coordinates are the same at times $T_{i+1}$ and $T_j$.
\end{itemize}

We start by considering the expected number of occurrences of the first case. Clearly, this is the same as $n$ times the expected number of occurrences corresponding to the first coordinate. By Proposition \ref{prop:boundcondbyuncond}, the probability that the first coordinate changes during $[s, s+ds)$, during $[s+t, s+t+dt)$ and at least once more during $(s, s+t)$ for $s, t >0$ such that $s+t< t^*$ is $O(t)\,ds\,dt$. Hence, by Lemma \ref{lemma:spread}, we get the upper bound
\begin{align*}
&n \int_0^{t^*} \int_0^{t^*-t} \left(\prod_{k=2}^n \mathbb{P}\left(X^k_s=X^k_{s+t}\right)\right) O(t) \,ds\,dt\\
&\qquad \leq  n \int_0^{t^*} \int_0^{t^*-t} O(t) e^{-(n-1)\Omega(t) }\,ds\,dt\\
&\qquad \leq n \int_0^\infty O(t) e^{-(n-1)\Omega(t)}\,dt = O(n^{-1}).
\end{align*}
Analogously, both of the remaining cases result in bounds of the form
\begin{align*}
&n(n-1) \int_0^{t^*} \int_0^{t^*-t} \left( \prod_{k=3}^n \mathbb{P}\left( X^k_s=X^k_{s+t}\right)\right) O(t^2)\,ds\,dt\\
&\qquad \leq n(n-1) \int_0^{t^*} \int_0^{t^*-t}  e^{-(n-2)\Omega(t)} O(t^2) \,ds\,dt\\
&\qquad \leq n(n-1) \int_0^\infty  e^{-(n-2)\Omega(t) }O(t^2) \,dt = O(n^{-1}). 
\end{align*}
Hence the probability that $X_t$ is almost self-avoiding is $1-O(n^{-1})$, as desired.

Let $\Gamma^{asa}_{G^n}(\bar{v}, \bar{w})$ denote the set of almost self-avoiding paths from $\bar{v}$ to $\bar{w}$ in $G^n$. By the argument above together with \eqref{eq:probtracepath}, we already know that
\begin{equation}\label{eq:asaestimate}
\sum_{\gamma \in \Gamma^{asa}_{G^n}(\bar{v}, \bar{w})} \frac{ \left(t^*\right)^{\abs{\gamma}}}{\abs{\gamma}!} = \mathbb{P}\left( X \text{ almost self-avoiding}\right) = 1-O(n^{-1}).
\end{equation}
Now, if we represent paths as sequences of edges, then each not necessarily self-avoiding path $\gamma'\in \Gamma_{G^n}(\bar{v}, \bar{w})$ contains a self-avoiding path $\gamma \in \Gamma_{G^n}^{sa}(\bar{v}, \bar{w})$ as a subsequence. In particular, if $\gamma'$ is almost self-avoiding, then it can be constructed from $\gamma$ by, at each vertex, either doing nothing or inserting a detour of length $2$ before the subsequent edge in $\gamma$. As each detour of length $2$ in $G^n$ must change the same coordinate twice, there are at most $n \Delta_o^2$ possible detours at each location. It follows that the contribution to \eqref{eq:asaestimate} from extensions of $\gamma$ is at most
\begin{align*}
\sum_{k=0}^{\abs{\gamma}+1} {\abs{\gamma}+1 \choose k} (\Delta_o^2n)^{k}\frac{ \left(t^*\right)^{\abs{\gamma}+2k}}{(\abs{\gamma}+2k)!} &\leq \frac{(t^*)^{\abs{\gamma}}}{ \abs{\gamma}! }\sum_{k=0}^{ \infty} \frac{ (\Delta_o^2n)^k (t^*)^{2k}}{k! \abs{\gamma}^{k}}\\
&= \frac{(t^*)^{\abs{\gamma}}}{ \abs{\gamma}! } \exp\left( \frac{ \Delta_o^2n(t^*)^{2}}{\abs{\gamma}}\right)\\
&\leq \frac{ (t^*)^{\abs{\gamma}}}{\abs{\gamma}!} e^{(\Delta_o t^*)^2},
\end{align*}
where, in the last step, we use that the distance between $\bar{v}$ and $\bar{w}$ is at least $n$. Hence, by \eqref{eq:asaestimate} we have
\begin{equation}
1-O(n^{-1}) \leq 
e^{(\Delta_o t^*)^2}\sum_{\gamma \in \Gamma^{sa}_{G^n}(\bar{v}, \bar{w})} \frac{(t^*)^{\abs{\gamma}}}{\abs{\gamma}!},
\end{equation}
as desired.
%which together with \eqref{eq:probtracepath} implies \eqref{eq:selfavoid}.
\end{proof}

We now turn to the problem of showing that $C(X)$ is not too large on average. In doing so, it turns out to be useful to treat the terms in $C(X)$ separately depending on the size of $T_j-T_i$. Let $\varepsilon>0$ be sufficiently small. We define
\begin{align}
C_1(X) &= \sum_{0 < T_j-T_i\leq \varepsilon} m_{G^n}(X_{T_i}, X_{T_j}, T_j-T_i)\label{eq:C1}\\
C_2(X) &= \sum_{T_j-T_i > \varepsilon} m_{G^n}(X_{T_i}, X_{T_j}, T_j-T_i).\label{eq:C2}
\end{align}

\begin{proposition}\label{prop:barsmallt}
For any fixed sufficiently small $\varepsilon>0$, we have $\mathbb{E}C_1(X)=O(1)$.
\end{proposition}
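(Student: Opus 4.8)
The plan is to exploit the product structure $m_{G^n}(x,y,t)=\prod_{k=1}^n m_G(x^k,y^k,t)$ together with the conditioned‑walk description of $X$. By Proposition \ref{prop:infprob}, conditionally on $X$ tracing a fixed path $\xi$ of length $L=|\xi|$ from $\bar v$ to $\bar w$ in $G^n$, its jump times are the order statistics $U_{(1)}<\dots<U_{(L)}$ of $L$ i.i.d.\ uniform points on $[0,t^*]$; moreover $L$ is stochastically dominated by $\Poiss(\Delta_o(G)\,t^*\,n)$ because $G^n$ has out‑degree $n\Delta_o(G)$, so $\mathbb{E}[L^2]=O(n^2)$ and $L=O(n)$ except with probability $e^{-\Omega(n)}$. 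For a pair $a<b$ of positions on $\xi$ let $\delta_{ab}$ be the number of coordinates $k$ with $\xi_a^k\neq\xi_b^k$; then $\delta_{ab}\le b-a$, and $\delta_{ab}=0$ means the sub‑walk $\xi[a,b]$ is a closed walk with vanishing net displacement in every coordinate, so each coordinate it touches is touched at least twice and $b-a\ge2$. By Lemma \ref{lemma:mbounds}, for $0\le s\le\varepsilon$ we have $m_G(x,y,s)\le C_4 s$ when $x\neq y$ and $m_G(x,x,s)\le e^{C_2 s^2}$, hence
\[
m_{G^n}(\xi_a,\xi_b,s)\le (C_4 s)^{\delta_{ab}}\,e^{C_2 n s^2}\qquad(0\le s\le\varepsilon).
\]

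I would then split the pairs $(a,b)$ with $U_{(b)}-U_{(a)}\le\varepsilon$ according to whether $\delta_{ab}\ge\tfrac12(b-a)$ or not. In the first regime the factor $(C_4 s)^{\delta_{ab}}$ does the work: on a window of length $s$ there are $\Theta(ns)$ jump times, so $\delta_{ab}=\Omega(ns)$ and $(C_4 s)^{\delta_{ab}}=e^{-\Omega(ns\log(1/s))}$, which for $\varepsilon$ small dominates $e^{C_2 ns^2}$ and all the $e^{O(ns)}$ combinatorial factors (the $\binom n{\delta_{ab}}$ choices of which coordinates move, the number of pairs at a given scale, and so on); carrying out the sum over $b-a$ and over $a$, and the integral over $s\in(0,\varepsilon]$ against the Beta‑type law of $U_{(b)}-U_{(a)}$, this regime contributes $O(1)$. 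In the second regime, $\delta_{ab}<\tfrac12(b-a)$ forces at least $\tfrac14(b-a)$ of the $b-a$ jumps inside the window into coordinates used there more than once; since a window of length $s$ holds only $\Theta(ns)$ jumps spread over $n$ coordinates, the number of such ``collision‑heavy'' pairs is smaller by a factor polynomial in $1/n$, which compensates the lost smallness. The extreme case $\delta_{ab}=0$ (closed sub‑walks) is handled the same way: such a pair needs the edges of $\xi$ just after step $a$ to close up, with a later block of edges, into a coordinate‑balanced loop — an event of probability $O(1/n)$ per position — so there are $O(1)$ such pairs in expectation, each contributing $e^{C_2 n(U_{(b)}-U_{(a)})^2}$, whose expectation is $O(1)$ since for $s\le\varepsilon$ the exponent $C_2 ns^2$ is beaten by the factor $e^{-\Omega(ns)}$ coming from the order‑statistic gap law.

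A variant of the same idea avoids conditioning on the whole path: the $n$ coordinates of $X$ are independent copies of the single‑coordinate conditioned walk $Y$, and by Proposition \ref{prop:convisprob}, Lemma \ref{lemma:mbounds} and Lemma \ref{lemma:spread} one has $\mathbb{E}[m_G(Y_s,Y_{s+t},t)]=g_G^{vw}(s,t,t^*-s-t,1)\le 1-c_0 t$ for $0<t\le\varepsilon$ (on $Y_s=Y_{s+t}$ the generating function is $1+O(t^2)$, on $Y_s\neq Y_{s+t}$ it is $O(t)$, and $\mathbb{P}(Y_s\neq Y_{s+t})=\Omega(t)$); averaging over the $n-2$ coordinates untouched by a given pair of jumps then produces a factor $e^{-c_0(n-2)(T_j-T_i)}$, which combined with the factor $O(T_j-T_i)$ from the coordinate active at $T_j$ (valid when that coordinate jumps only once inside $(T_i,T_j]$) and a separate rarity argument for the remaining pairs again yields $O(1)$. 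Either way the main obstacle is the same: to show, uniformly in $n$, that the super‑exponential gain $s^{\#\{\text{displaced coordinates}\}}$ — equivalently, the exponential gain $e^{-c_0 ns}$ from the free coordinates — really beats the stubborn frozen‑coordinate factor $e^{C_2 ns^2}$, the $\binom n{\delta}$ choices of which coordinates move, and the contributions of coordinates revisited several times, and to do so tightly enough that after integrating over the window length $s\in(0,\varepsilon]$ and summing over the $\Theta(n^2)$ pairs of jump times the total collapses to $O(1)$. It is exactly here that ``$\varepsilon$ sufficiently small'' is used, through a condition of the form $\log(C_4\varepsilon)+C_2\varepsilon<0$.
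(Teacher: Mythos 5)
Your second sketch is essentially the paper's argument, but as written the proposal has a genuine gap: the decisive bookkeeping is deferred rather than carried out. The paper's proof is precisely the ``separate rarity argument'' you leave out. It splits the pairs with $0<T_j-T_i\le\varepsilon$ into three cases ($i=0$; $K_i=K_j$; $K_i\neq K_j$), and for the one or two coordinates actually involved it bounds the joint density of the relevant jump pattern by applying Proposition \ref{prop:boundcondbyuncond} \emph{per coordinate} (so the constant is $e^{\Delta_o(G)t^*}$, independent of $n$): the density is $O(1)\,ds\,dt$ when the active coordinate jumps exactly once in the window (and then $m_G(X^{K_j}_s,X^{K_j}_{s+t},t)=O(t)$ because $G$ is loopless, so a single jump displaces the coordinate), and $O(t)\,ds\,dt$ when that coordinate jumps again inside the window. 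For the remaining $n-1$ or $n-2$ coordinates, independence factorizes the expectation into $\prod_k\mathbb{E}\left[m_G(X^k_s,X^k_{s+t},t)\right]$, and the key estimate --- Lemma \ref{lemma:mbounds} together with Lemma \ref{lemma:spread} --- gives $\mathbb{E}\left[m_G(X^k_s,X^k_{s+t},t)\right]\le 1-\Omega(t)+O(t^2)\le e^{-\Omega(t)}$ once $\varepsilon$ is small; then sums of the form $n^2\int_0^\varepsilon O(t)\,e^{-\Omega(nt)}\,dt$ and $n\int_0^\varepsilon O(1)\,e^{-\Omega(nt)}\,dt$ are $O(1)$. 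In particular, the ``main obstacle'' you flag at the end --- an uncancelled frozen-coordinate factor $e^{C_2 ns^2}$ --- simply does not arise on this route: the $O(t^2)$ term sits \emph{inside} the per-coordinate expectation and is absorbed by the $\Omega(t)$ displacement probability from Lemma \ref{lemma:spread}, which is exactly where ``$\varepsilon$ sufficiently small'' enters (not through a condition like $\log(C_4\varepsilon)+C_2\varepsilon<0$).

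Your first, path-conditioned plan has independent problems: the number of jumps of $X$ in a window of length $s$ is not deterministically $\Theta(ns)$, and even given that many jumps the displacement $\delta_{ab}$ need not be $\Omega(ns)$ (the jumps may concentrate in few coordinates and cancel), so the claimed gain $(C_4 s)^{\delta_{ab}}=e^{-\Omega(ns\log(1/s))}$ is unjustified; likewise the ``$O(1/n)$ per position'' count for closed sub-walks and the ``factor polynomial in $1/n$'' compensation for collision-heavy pairs are asserted, not proved. Since your closing paragraph itself concedes that the uniform-in-$n$ comparison has not been established, the statement is not yet proved; completing your second sketch along the paper's three-case analysis is the way to close it.
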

\begin{proof}
For any $1 \leq i \leq L$, let $K_i$ denote the coordinate that is updated at time $T_i$. We have three types of terms in \eqref{eq:C1}: those where $i=0$, where $K_i=K_j$, and  where $K_i\neq K_j$.

We proceed in a similar manner to the proof of Proposition \ref{prop:selfavoid}. By Proposition \ref{prop:boundcondbyuncond}, the probability that the random walk changes a given coordinate, say the first one, for the first time during $[t, t+dt)$ is $O(1)\,dt$. Similarly, the probability that the coordinate changes during $[t, t+dt)$ and at least once more before this $O(t) \,dt$. Note that in case of the former, $v=X^1_0$ and $X^1_{t+dt}$ are different and hence $m_G(v, X^1_{t+dt}, t) = O(t)$ by Lemma \ref{lemma:mbounds}. By coordinate independence of $X$ and the multiplicativity of the generating function, it follows that, conditioned on one of these events, the expected value of $m_{G^n}(\bar{v}, X_{t+dt}, t)$ is $$O(t) \prod_{k=2}^n \mathbb{E}\left[ m_G(v, X^k_{t+dt}, t)\right]$$ and $$O(1) \prod_{k=2}^n \mathbb{E}\left[ m_G(v, X^k_{t+dt}, t)\right]$$ respectively. It  follows that
\begin{equation}
\begin{split}\label{eq:barianceest1}
\mathbb{E} \sum_{T_j \leq \varepsilon} m_{G^n}(\bar{v}, X_{T_j}, T_j) &= n \int_0^\varepsilon \left(O(t) \prod_{k=2}^n \mathbb{E}\left[ m_G(v, X^k_t, t) \right]\right) O(1) \,dt\\
&\qquad + n\int_0^\varepsilon \left(O(1) \prod_{k=2}^n \mathbb{E}\left[ m_G(v, X^k_t, t) \right]\right) O(t)\,dt.
\end{split}
\end{equation}

Turning to the case where $K_i=K_j$, again by Proposition \ref{prop:boundcondbyuncond}, the probability that a given coordinate changes during $[s, s+ds)$ and during $[s+t, s+t+dt)$ is $O(1)\,ds\,dt$. Hence, in the same way as above, we see that
\begin{equation}
\begin{split}\label{eq:barianceest2}
&\mathbb{E}\sum_{\substack{0< T_j-T_i \leq \varepsilon\\ K_i=K_j}} m_{G^n}(X_{T_i}, X_{T_j}, T_j-T_i)\\
&\qquad = n \int_0^\varepsilon \int_0^{t^*-t} \left( O(1) \prod_{k=2}^n \mathbb{E}\left[m_G(X^k_s, X^k_{s+t}, t)\right] \right) O(1)\,ds\,dt.
\end{split}
\end{equation}

As for the case where $K_i\neq K_j$, the probability that, say, the first coordinate changes during $[s, s+ds)$, the second coordinate changes during $[s+t, s+t+dt)$, and the second coordinate does not change during $[s, s+t)$ is $O(1)\,ds\,dt$. Similarly, the probability if the second coordinate is required to change during $[s, s+t)$ is $O(t)\,ds\,dt$. In the former case, $X^2_s$ and $X^2_{s+t+dt}$ are different, and hence $m_G(X^2_s, X^2_{s+t}, t) = O(t)$. It follows that
\begin{equation}
\begin{split}\label{eq:barianceest3}
&\mathbb{E}\sum_{\substack{0 < T_j-T_i < \varepsilon\\ K_i\neq K_j}} m_{G^n}(X_{T_i}, X_{T_j}, T_j-T_i)\\
&\qquad= n(n-1) \int_0^\varepsilon \int_0^{t^*-t} \left( O(t) \prod_{k=3}^n \mathbb{E}\left[m_G(X^k_s, X^k_{s+t}, t)\right] \right) O(1) \,ds\,dt\\
&\qquad + n(n-1) \int_0^\varepsilon \int_0^{t^*-t} \left( O(1) \prod_{k=3}^n \mathbb{E}\left[m_G(X^k_s, X^k_{s+t}, t)\right] \right) O(t)\,ds\,dt.
\end{split}
\end{equation}

It remains to consider the expected value of $m_G(X^k_{s}, X^k_{s+t}, t)$. By Lemma \ref{lemma:mbounds}, we have
\begin{align*}
&\mathbb{E}\left[ m_G(X^k_s, X^k_{s+t}, t)\right] = \mathbb{P}\left( X^k_s = X^k_{s+t}\right) \left(1+O(t^2)\right) + \mathbb{P}\left( X^k_s \neq X^k_{s+t}\right) O(t)\\
&\quad =1 + O(t^2)-\mathbb{P}\left( X^k_s \neq X^k_{s+t}\right) \left( 1 - O(t)\right).
\end{align*}
It follows by Lemma \ref{lemma:spread} that for $\varepsilon>0$ sufficiently small we have $$\mathbb{E}[m_G( X^k_{s}, X^k_{s+t}, t)] = e^{-\Omega(t)},$$
for any $0\leq t \leq \varepsilon$. Plugging this into the bounds in \eqref{eq:barianceest1}, \eqref{eq:barianceest2} and \eqref{eq:barianceest3}, we see that the integrals evaluate to $O(n^{-1})$, $O(1)$ and $O(1)$ respectively.
\end{proof}

\begin{proposition}\label{prop:moderatebariance}
Suppose $f_G^{vw}(s, t) \leq -\Omega(t(t^*-t))$. Then for any $p<1$, there exists a constant $M$, not depending on $n$, such that
\begin{equation}
\mathbb{P}\left( C(X) \leq M \right) \geq p-o(1).
\end{equation}
\end{proposition}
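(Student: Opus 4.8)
The plan is to bound $C(X)$ through the splitting $C(X)=C_1(X)+C_2(X)$ from \eqref{eq:C1}--\eqref{eq:C2}, control each piece in expectation, and conclude with Markov's inequality. Proposition \ref{prop:barsmallt} already gives $\mathbb{E}C_1(X)=O(1)$ for a fixed sufficiently small $\varepsilon$, so $\mathbb{P}(C_1(X)>M_1)\le \mathbb{E}C_1(X)/M_1$ is below $(1-p)/2$ once $M_1$ is large. Hence it suffices to prove $\mathbb{E}C_2(X)=O(1)$: then $\mathbb{P}(C_2(X)>M_2)<(1-p)/2$ for $M_2$ large, and taking $M=M_1+M_2$ together with a union bound finishes the proof.

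To estimate $\mathbb{E}C_2(X)$ I would mimic the proof of Proposition \ref{prop:barsmallt}: split the sum over pairs $i<j$ with $T_j-T_i>\varepsilon$ according to whether $i=0$, $K_i=K_j$, or $K_i\neq K_j$, and in each case turn the expectation into an integral over the two marked jump times $s=T_i$ and $t=T_j-T_i$. By Proposition \ref{prop:boundcondbyuncond} the relevant jump-time densities are $O(1)$, and by the coordinate-wise independence of the conditioned random walk together with Proposition \ref{prop:multiplicativity}, conditioning on the (one or two) marked coordinates leaves the remaining $n-O(1)$ coordinates as i.i.d.\ conditioned walks on $G$; by Proposition \ref{prop:convisprob} (which identifies the joint law of $Y_s,Y_{s+t}$, whence $\mathbb{E}\,m_G(Y_s,Y_{s+t},t)=g_G^{vw}(s,t,t^*-s-t,1)$) their contribution is $g_G^{vw}(s,t,t^*-s-t,1)^{\,n-O(1)}$, while the marked coordinates contribute only a bounded factor by Lemma \ref{lemma:mbounds}. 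Summing over the $O(n^2)$, resp.\ $O(n)$, choices of marked coordinates leads to an estimate of the shape
\[
\mathbb{E}C_2(X)\le C\,n^{2}\int_{\varepsilon}^{t^*}\!\!\int_0^{t^*-t}g_G^{vw}(s,t,t^*-s-t,1)^{\,n-2}\,ds\,dt+R_n,
\]
where $R_n$ collects the lower-order contributions from the $i=0$ and $K_i=K_j$ cases and will turn out to be $o(1)$.

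The crucial estimate is then
\[
g_G^{vw}(s,t,t^*-s-t,1)\le 1-\delta\,(t^*-t)\qquad\text{for all }t\ge\varepsilon,\ s+t\le t^*,
\]
for some $\delta=\delta(G,v,w,\varepsilon)>0$, and this is exactly where the hypothesis $f_G^{vw}\le-\Omega(t(t^*-t))$ is used. I would prove it by distinguishing two ranges of $t$. For $t$ in a fixed right-neighbourhood of $0$ (but $\ge\varepsilon$) I would argue as in Proposition \ref{prop:barsmallt}, using Lemmas \ref{lemma:mbounds} and \ref{lemma:spread} to get $\mathbb{E}\,m_G(Y_s,Y_{s+t},t)\le 1-\Omega(t)\le 1-\delta(t^*-t)$. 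For $t$ bounded away from $0$ I would invoke the Taylor expansion $g_G^{vw}(s,t,t^*-s-t,1)=1+f_G^{vw}(s,t)+O(t^*-t)$ of Proposition \ref{prop:unifconv}, whose error constant is fixed (it does not depend on $\varepsilon$), combined with $f_G^{vw}(s,t)\le-\Omega(t(t^*-t))$; this already gives the bound once $t$ is large enough, and the remaining compact intermediate range of $t$ (bounded away from both $0$ and $t^*$) is handled by using the strict negativity of $f_G^{vw}$ there to get $g_G^{vw}(s,t,t^*-s-t,1)<1$ pointwise and then passing to the uniform bound by continuity and compactness. Granting this estimate, $g_G^{vw}(s,t,t^*-s-t,1)^{\,n-2}\le e^{-\delta(n-2)(t^*-t)}$, and carrying out the inner ($s$-) integral produces the factor $\int_0^{t^*-\varepsilon} r\,e^{-\delta(n-2)r}\,dr=O(n^{-2})$, which precisely cancels the $n^2$ prefactor; the terms in $R_n$ come with prefactor $n$ and are therefore $o(1)$. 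Hence $\mathbb{E}C_2(X)=O(1)$, as required.

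I expect the main obstacle to be the crucial estimate, and within it the intermediate range of $t$: there the short-time argument of Proposition \ref{prop:barsmallt} no longer applies, and the crude $O(t^*-t)$ Taylor error need not be dominated by $\abs{f_G^{vw}(s,t)}$, so one genuinely has to exploit that $f_G^{vw}$ is \emph{strictly} negative rather than merely non-positive --- which is precisely why the stronger hypothesis of this subsection is assumed here, with the relaxation to $f_G^{vw}\le 0$ postponed to Subsection \ref{sec:weightedgraphs}. A secondary delicate point is the behaviour near $t=t^*$, where $g_G^{vw}(s,t,t^*-s-t,1)\to 1$: it is the interplay between the shrinking $s$-range $[0,t^*-t]$ and the bound $g_G^{vw}(s,t,t^*-s-t,1)^{\,n}\le e^{-\delta n(t^*-t)}$ that produces the decisive factor $O(n^{-2})$.
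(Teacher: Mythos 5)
Your overall architecture (split $C=C_1+C_2$, use Proposition \ref{prop:barsmallt} for $C_1$, then Markov) is fine, but the way you handle $C_2$ has a genuine gap at exactly the point you flag as ``the crucial estimate''. You work at exponent $1$, i.e.\ you need $g_G^{vw}(s,t,t^*-s-t,1)\le 1-\delta(t^*-t)$ for $t\ge\varepsilon$, and you propose to get it from strict negativity of $f_G^{vw}$ together with the Taylor expansion of Proposition \ref{prop:unifconv}. This does not follow from the hypothesis. The function $\alpha\mapsto g_G^{vw}(s,t,t^*-s-t,\alpha)$ is convex with value $1$ and slope $f_G^{vw}(s,t)$ at $\alpha=0$, so $f<0$ only yields the \emph{lower} bound $g(\cdot,1)\ge 1+f$; it says nothing that forces $g(\cdot,1)<1$. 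The expansion \eqref{eq:gtaylor} gives $g(\cdot,1)\le 1+f_G^{vw}(s,t)+C(t^*-t)$ with a fixed constant $C$ (the $O((t^*-t)\alpha^2)$ term evaluated at $\alpha=1$), and the hypothesis only guarantees $f_G^{vw}(s,t)\le -c\,t(t^*-t)$ with a possibly tiny $c$; since $c\,t\le c\,t^*$ need not exceed $C$, this bound can be useless on the entire range $t\ge\varepsilon$, not just on an ``intermediate'' range. In particular the step ``use strict negativity of $f$ to get $g(\cdot,1)<1$ pointwise'' is a non sequitur, and with it the claim $\mathbb{E}C_2(X)=O(1)$ is unsupported (and, since the $n$-fold product makes $\mathbb{E}C_2$ behave like $n^2\int g(\cdot,1)^{n-O(1)}$, it would in fact blow up whenever $g(\cdot,1)>1$ somewhere, which the hypothesis does not rule out).

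The paper's proof avoids this by never taking a first moment of $C_2$. It fixes a \emph{small} $\alpha>0$ and bounds $\mathbb{E}\sum_{T_j-T_i>\varepsilon} m_{G^n}(X_{T_i},X_{T_j},T_j-T_i)^{\alpha}$, which by coordinate independence reduces to powers of $\mathbb{E}[m_G(X^k_s,X^k_{s+t},t)^\alpha]=1+\alpha f_G^{vw}(s,t)+O((t^*-t)\alpha^2)$; here the error carries the factor $\alpha^2$, so for $\alpha$ small (depending on $c$ and $\varepsilon$) the hypothesis gives $\le e^{-\Omega((t^*-t)\alpha)}$ uniformly on $t\ge\varepsilon$, and the integrals are $O(1)$. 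One then passes back to $C_2$ via the elementary inequality $\sum_i a_i\le\bigl(\sum_i a_i^\alpha\bigr)^{1/\alpha}$ for $0<\alpha\le 1$ and applies Markov to the $\alpha$-sum; this yields a high-probability bound $C_2(X)\le M$ without ever controlling $\mathbb{E}C_2(X)$. This small-$\alpha$ device is the missing idea: the assumption on $f_G^{vw}$ is a statement about the derivative of $g$ at $\alpha=0$ and can only be exploited for small exponents, not at exponent $1$.
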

\begin{proof}
By Proposition \ref{prop:barsmallt}, it only remains to check $C_2(X)$. Following the argument in the proof of that proposition, it follows that for any fixed $\alpha>0$,
\begin{equation}
\begin{split}\label{eq:malphaest}
&\mathbb{E} \sum_{T_j-T_i>\varepsilon} m_{G^n}(X_{T_i}, X_{T_j}, T_j-T_i)^{\alpha}\\
&\qquad = n \int_\varepsilon^{t^*} \left( O(1) \prod_{k=2}^n \mathbb{E}\left[ m_G(X^k_0, X^k_t, t)^\alpha\right]\right) O(1) \,dt\\
&\qquad + n \int_\varepsilon^{t^*} \int_{0}^{t^*-t} \left( O(1) \prod_{k=2}^n \mathbb{E}\left[ m_G(X^k_s, X^k_{s+t}, t)^\alpha\right]\right) O(1)\,ds\,dt\\
&\qquad + n^2\int_\varepsilon^{t^*} \int_{0}^{t^*-t} \left( O(1) \prod_{k=3}^n \mathbb{E}\left[ m_G(X^k_s, X^k_{s+t}, t)^\alpha\right]\right) O(1)\,ds\,dt.
\end{split}
\end{equation}

By Propositions \ref{prop:convisprob} and \ref{prop:unifconv}, we have
\begin{equation}\label{eq:mtaylor}
\begin{split}
&\mathbb{E}\left[ m_G(X^k_s, X^k_{s+t}, t)^\alpha\right]\\
&\qquad= \sum_{x, y\in G} m_G(v, x, s) m_G(x, y, t)^{1+\alpha} m_G(y, w, t^*-s-t)\\
&\qquad=1+\alpha f_G^{vw}(s, t) + O\left( (t^*-t)\alpha^2\right).
\end{split}
\end{equation}
Under the assumption that $f_G^{vw}(s, t) = -\Omega(t(t^*-t))$, it follows  that for sufficently small $\alpha>0$ we have $\mathbb{E}\left[ m_G(X^k_s, X^k_{s+t}, t)^\alpha\right] \leq e^{-\Omega((t^*-t)\alpha)}$ for $\varepsilon \leq t \leq t^*$. Plugging this into \eqref{eq:malphaest}, it follows that
\begin{equation}
\mathbb{E} \sum_{T_j-T_i>\varepsilon} m_{G^n}(X_{T_i}, X_{T_j}, T_j-T_i)^{\alpha} = O(1)+O\left(n^{-1}\right)+O(1).
\end{equation}

The Proposition follows by Markov's inequality together with the elementary inequality
\begin{align*}
C_2(X) &= \sum_{T_j-T_i>\varepsilon} m_{G^n}(X_{T_i}, X_{T_j}, T_j-T_i)\\ &\leq \left(\sum_{T_j-T_i>\varepsilon} m_{G^n}(X_{T_i}, X_{T_j}, T_j-T_i)^{\alpha}\right)^{1/\alpha},
\end{align*}
which holds for any $0<\alpha \leq 1$.
\end{proof}

In Proposition \ref{prop:moderatebariance} we take $p\geq 1-e^{-(\Delta_o t^*)^2}/2$ so that by plugging Proposition \ref{prop:selfavoid} into the right-hand side of \eqref{eq:Tubrw} with $H=G^n$ we get
\begin{equation}\label{eq:posprobub}
\begin{split}
\mathbb{P}(T_{G^n}(\bar{v}, \bar{w}) \leq t^*) &\geq \mathbb{E}[\mathbbm{1}_{X\text{ s.a.}}e^{-C(X)} \mathbbm{1}_{C(X) \leq M }]\\ &\geq e^{-M}e^{-(\Delta_o t^*)^2}/2 - O(n^{-1}),
\end{split}
\end{equation}
which is bounded away from zero uniformly in $n$. As we shall see in the next subsection, this statement is sufficient to prove convergence by more direct means, using the structure of power graphs.

%Combining Propositions \ref{prop:selfavoid} and \ref{prop:moderatebariance}, it follows by Proposition \ref{prop:Tubrw} that if $G$ does not contain loops, and $f_G^{vw}(s, t)=-\Omega(t(t^*-t))$, then
%\begin{equation}%\label{eq:posprobub}
%\liminf_{n\rightarrow\infty} \mathbb{P}\left( T_{G^n}(\bar{v}, \bar{w})\leq t^*\right) > 0.
%\end{equation}

\subsection{An asymptotically almost sure upper bound}\label{sec:recineq}

The argument given here is a variation of the argument given in Section 5 of \cite{M16} which proves an asymptotically almost sure upper bound on the first-passage time between antipodal vertices in the hypercube. Let $G$ be any graph, and let $v$ and $w$ be distinct vertices in $G$. We start by deriving a recursive relation on $T_{G^n}(\bar{v}, \bar{w})$. Assume $n\geq 3$. For each $1\leq i \leq n$, let $\bar{v}^i$ denote the vertex in $G^n$ whose $i$:th coordinate is $w$ and all other coordinates are $v$. Similarly, let $\bar{w}^i$ be the vertex in $G^n$ whose $i$:th coordinate is $v$ and all other coordinates are $w$.

Let $\gamma$ be a path from $v$ to $w$ in $G$ of minimal length. For each $1\leq i \leq n$ let $\gamma_i$ and $\gamma'_i$ be the paths from $\bar{v}$ to $\bar{v}^i$  and from $\bar{w}^i$ to $\bar{w}$ respectively whose $i$:th projections are $\gamma$ and whose $j$:th projections for all $j\neq i$ are trivial. Given the passage times of these paths, pick distinct indices $i_1$ and $i_2$ that minimize
 $$T_{G^n}(\gamma_{i_1})+T_{G^n}(\gamma'_{i_1}) + T_{G^n}(\gamma_{i_2})+T_{G^n}(\gamma'_{i_2}).$$ Without delving into any technical calculations, it is not too hard to convince oneself that the expected value of this sum is $o(1)$, as each $T_{G^n}(\gamma_i)$ and $T_{G^n}(\gamma_i')$ is an independent sum of a fixed number of independent $\Exp(1)$ random variables.
 
Given $i_1$ and $i_2$, we let $H_1$ equal the subgraph of $G^n$ consisting of all vertices where the $i_1$:th coordinate is $w$ and the $i_2$:th coordinate is $v$. Similarly, let $H_2$ be the induced subgraph where the $i_1$:th coordinate is $v$ and the $i_2$:th coordinate is $w$. Some observations are in order:
\begin{itemize}
\item $H_1$ and $H_2$ are vertex disjoint.
\item $H_1$ and $H_2$ are both isomorphic to  $G^{n-2}$.
\item The only vertices that $H_{k}$, for $k=1, 2$, can have in common with any path $\gamma_i$ or $\gamma'_i$ are $\bar{v}^{i_{k}}$ and $\bar{w}^{i_{3-k}}$. In particular, $H_k$ has no edge in common with $\gamma_i$ or $\gamma'_i$ since $n\geq 3$.
\end{itemize}

It follows that $T_{H_1}(\bar{v}^{i_1}, \bar{w}^{i_2})$ and $T_{H_2}(\bar{v}^{i_2}, \bar{w}^{i_1})$ are independent and have the same distribution as $T_{G^{n-2}}(\bar{v}, \bar{w})$. By bounding $T_{G^n}(\bar{v}, \bar{w})$ by the minimum of
$$T_{G^n}(\gamma_{i_1}) + T_{H_1}(\bar{v}^{i_1}, \bar{w}^{i_2}) + T_{G^n}(\gamma'_{i_2})$$
and
$$T_{G^n}(\gamma_{i_2}) + T_{H_1}(\bar{v}^{i_2}, \bar{w}^{i_1}) + T_{G^n}(\gamma'_{i_1})$$
we have shown the following:
\begin{proposition}\label{prop:recineq}
Assume $n\geq 3$. There exists a non-negative random variable $\xi_n$ with expected value $o(1)$ such that $T_{G^n}(\bar{v}, \bar{w})$ is stochastically dominated by $\xi_n$ plus the minimum of two independent copies of $T_{G^{n-2}}(\bar{v}, \bar{w})$. \qed
\end{proposition}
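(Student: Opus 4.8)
The plan is to turn the construction carried out above into an explicit coupling on the original probability space. Write $d := \dist_G(v, w) = \abs{\gamma} \ge 1$ and, for $1 \le i \le n$, set $Z_i := T_{G^n}(\gamma_i) + T_{G^n}(\gamma'_i)$. The $2n$ paths $\gamma_1, \gamma'_1, \dots, \gamma_n, \gamma'_n$ are pairwise edge-disjoint: for $i \ne j$ the paths $\gamma_i, \gamma'_i$ only change coordinate $i$ while $\gamma_j, \gamma'_j$ only change coordinate $j$, and $\gamma_i$ differs from $\gamma'_i$ because their background coordinates sit at $v$ rather than at $w$ (here $n \ge 2$). Hence the $2n$ passage times $T_{G^n}(\gamma_i), T_{G^n}(\gamma'_i)$ are mutually independent, each a sum of $d$ independent $\Exp(1)$'s, so the $Z_i$ are i.i.d.\ with law $\Gamma(2d,1)$. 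I would then set $\xi_n := \min_{i_1 \ne i_2}(Z_{i_1} + Z_{i_2}) = Z_{(1)} + Z_{(2)}$, the sum of the two smallest of $Z_1, \dots, Z_n$; this is exactly the quantity minimised when the indices $i_1, i_2$ are chosen, and it is non-negative.

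Next I would record the pathwise inequality. For the chosen $i_1, i_2$ (hence $H_1, H_2$), concatenating $\gamma_{i_1}$, a geodesic of $H_1$ from $\bar{v}^{i_1}$ to $\bar{w}^{i_2}$, and $\gamma'_{i_2}$ is a path from $\bar{v}$ to $\bar{w}$ in $G^n$, so
\[
T_{G^n}(\bar{v}, \bar{w}) \le T_{G^n}(\gamma_{i_1}) + T_{H_1}(\bar{v}^{i_1}, \bar{w}^{i_2}) + T_{G^n}(\gamma'_{i_2}),
\]
and symmetrically with $i_1 \leftrightarrow i_2$ and $H_1 \to H_2$. Since $T_{G^n}(\gamma_{i_1}), T_{G^n}(\gamma'_{i_1}), T_{G^n}(\gamma_{i_2}), T_{G^n}(\gamma'_{i_2})$ are non-negative and together equal $\xi_n$, each of the two right-hand sides is at most $\xi_n + T_{H_k}(\cdot,\cdot)$, and taking the minimum of the two displays gives
\[
T_{G^n}(\bar{v}, \bar{w}) \le \xi_n + \min\!\big(T_{H_1}(\bar{v}^{i_1}, \bar{w}^{i_2}),\, T_{H_2}(\bar{v}^{i_2}, \bar{w}^{i_1})\big).
\]

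The crucial point is the independence, and here one must be a little careful because $i_1, i_2$ — and therefore $H_1, H_2$ — are themselves random. Condition on the passage times of all edges lying on some $\gamma_i$ or $\gamma'_i$; this determines $\xi_n$, the indices $i_1, i_2$, and hence $H_1, H_2$, which by the observations above (using $n \ge 3$) are vertex-disjoint from one another and edge-disjoint from every $\gamma_i, \gamma'_i$. Thus, given the conditioning, $T_{H_1}(\bar{v}^{i_1}, \bar{w}^{i_2})$ and $T_{H_2}(\bar{v}^{i_2}, \bar{w}^{i_1})$ are functions of disjoint, untouched families of i.i.d.\ $\Exp(1)$ weights, so they are independent of each other and of $\xi_n$, and since $H_k \cong G^{n-2}$ each has the law of $T_{G^{n-2}}(\bar{v}, \bar{w})$. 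As this conditional law is the same for every value of the conditioning, the triple $(\xi_n, T_{H_1}(\cdots), T_{H_2}(\cdots))$ is unconditionally mutually independent, the last two marginals both equal to that of $T_{G^{n-2}}(\bar{v}, \bar{w})$. Combined with the last display, this exhibits $T_{G^n}(\bar{v}, \bar{w})$ below $\xi_n$ plus the minimum of two independent copies of $T_{G^{n-2}}(\bar{v}, \bar{w})$, which is the claimed stochastic domination (indeed a coupling).

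It remains to check $\mathbb{E}\xi_n = \mathbb{E} Z_{(1)} + \mathbb{E} Z_{(2)} = o(1)$. With $p(t) := \mathbb{P}(Z_1 \le t)$ one has $\mathbb{E} Z_{(1)} = \int_0^\infty (1-p(t))^n\,dt$ and $\mathbb{E} Z_{(2)} = \int_0^\infty \big((1-p(t))^n + np(t)(1-p(t))^{n-1}\big)\,dt$. On $[1,\infty)$ both integrands are at most $n(1-p(1))^{n-2}\,\mathbb{P}(Z_1 > t)$, whose integral is at most $n(1-p(1))^{n-2}\,\mathbb{E} Z_1 = 2d\,n(1-p(1))^{n-2} \to 0$, since $0 < 1-p(1) < 1$. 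On $[0,1]$, Proposition \ref{prop:sumofexp1} gives $e^{-1} t^{2d}/(2d)! \le p(t) \le t^{2d}/(2d)!$, and the substitution $u = n^{1/(2d)} t$ turns both $\int_0^1 (1-p(t))^n\,dt$ and $\int_0^1 np(t)(1-p(t))^{n-1}\,dt$ into $O(n^{-1/(2d)})$. As $d \ge 1$ is fixed, $\mathbb{E}\xi_n = O(n^{-1/(2d)}) = o(1)$, completing the argument. The one genuinely delicate step is the conditioning in the third paragraph (so that the random subgraphs $H_1, H_2$ become fixed before the edge-disjointness is invoked); everything else is routine bookkeeping using facts already established above.
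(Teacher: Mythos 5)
Your proposal is correct and follows essentially the same route as the paper: the same paths $\gamma_i,\gamma'_i$, the same choice of $i_1,i_2$ minimizing the sum, the same disjoint subgraphs $H_1,H_2\cong G^{n-2}$, and the same pathwise bound. The only difference is that you spell out two points the paper leaves to the reader — the conditioning argument making the independence of $T_{H_1},T_{H_2}$ from $\xi_n$ rigorous despite $i_1,i_2$ being random, and the explicit order-statistics computation showing $\mathbb{E}\xi_n=O(n^{-1/(2d)})=o(1)$ — both of which are carried out correctly.
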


\begin{lemma}\label{lemma:minofiid}
Let $T$ and $T'$ be independent identically distributed random variables. Then for any real value $C$ and any $0 \leq p \leq \mathbb{P}\left( T \leq C\right)$,
$$\mathbb{E} \min( T, T') \leq p C + (1-p) \mathbb{E} T .$$
\end{lemma}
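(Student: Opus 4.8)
The plan is to prove the inequality first at the single value $p=\mathbb{P}(T\le C)$ using independence, and then upgrade it to all smaller $p$ by noting that the right-hand side is affine in $p$.

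First I would write $q=\mathbb{P}(T\le C)=\mathbb{P}(T'\le C)$. On the event $\{T'\le C\}$ one has $\min(T,T')\le T'\le C$, while on $\{T'>C\}$ one has $\min(T,T')\le T$; summing the two contributions gives
\begin{equation*}
\mathbb{E}\min(T,T')\le C\,\mathbb{P}(T'\le C)+\mathbb{E}\!\left[T\,\mathbbm{1}_{T'>C}\right].
\end{equation*}
Since $T$ and $T'$ are independent, the last expectation factors as $\mathbb{E}[T]\,\mathbb{P}(T'>C)=(1-q)\mathbb{E}T$, so $\mathbb{E}\min(T,T')\le qC+(1-q)\mathbb{E}T$. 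Separately, the trivial bound $\min(T,T')\le T$ gives $\mathbb{E}\min(T,T')\le\mathbb{E}T$, which is precisely the claimed inequality at $p=0$.

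To finish, set $f(p)=pC+(1-p)\mathbb{E}T$, an affine function of $p$. If $q=0$ then necessarily $p=0$ and we are done; otherwise, for $p\in[0,q]$ write $p=(1-\tfrac{p}{q})\cdot 0+\tfrac{p}{q}\cdot q$, so that $f(p)=(1-\tfrac{p}{q})\,f(0)+\tfrac{p}{q}\,f(q)\ge\min\{f(0),f(q)\}$. Since the two previous bounds say $\mathbb{E}\min(T,T')\le f(0)$ and $\mathbb{E}\min(T,T')\le f(q)$, we conclude $\mathbb{E}\min(T,T')\le f(p)$ for every $p\in[0,q]$. (If $\mathbb{E}T=\infty$ the statement is vacuous: then $q<1$, since $q=1$ would force $T\le C$ almost surely and hence $\mathbb{E}T\le C<\infty$; so $p<1$ and the right-hand side is $+\infty$.)

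I do not expect any genuine obstacle here. The only care needed is to organize the argument as ``prove it at the two endpoints $p=0$ and $p=q=\mathbb{P}(T\le C)$, then interpolate by affineness,'' to invoke independence in the one place it is used, and to dispose of the degenerate cases $q=0$ and $\mathbb{E}T=\infty$, both trivial.
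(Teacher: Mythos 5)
Your proof is correct and follows essentially the same route as the paper: establish the bound at the two endpoints $p=0$ and $p=\mathbb{P}(T\leq C)$ (the latter via the decomposition $\min(T,T')\leq C\mathbbm{1}_{T'\leq C}+T\mathbbm{1}_{T'>C}$ together with independence), then handle intermediate $p$ by a convex-combination argument. The extra care about the degenerate cases $q=0$ and $\mathbb{E}T=\infty$ is fine but not needed beyond what the paper's one-line proof already covers.
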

\begin{proof}
The case where $p=0$ is obvious. For $p=\mathbb{P}\left( T \leq C\right)$, the inequality follows by taking the expected value of the bound $ \min( T, T') \leq C \mathbbm{1}_{T\leq C} + T' \mathbbm{1}_{T>C}.$ The general case is a convex combination of these.
\end{proof}

\begin{proposition}\label{prop:recineq}
Let $G$ be a graph with vertices $v$ and $w$. Suppose there exists a constant $C>0$ such that $$\liminf\limits_{n\rightarrow\infty} \mathbb{P}\left( T_{G^n}(\bar{v}, \bar{w}) \leq C \right) > 0,$$
then $$\mathbb{E}T_{G^n}(\bar{v}, \bar{w}) \leq C + o(1),$$
as $n\rightarrow\infty$.
\end{proposition}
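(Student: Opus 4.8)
The plan is to convert the recursive stochastic domination proved just above into a numerical recursion for the expectations $a_n := \mathbb{E}T_{G^n}(\bar{v},\bar{w})$, and then let $n\to\infty$. First I would record some preliminaries. The hypothesis forces $v$ and $w$ to lie in the same component of $G$ (otherwise $T_{G^n}(\bar{v},\bar{w})=\infty$ almost surely and the probability in question is $0$), so $\dist_G(v,w)<\infty$ and every $a_n$ is finite, being at most the expected passage time of the path of length $n\,\dist_G(v,w)$ that traverses a geodesic of $G$ one coordinate at a time. By the hypothesis there are $p>0$ and $n_0$ such that $\mathbb{P}(T_{G^n}(\bar{v},\bar{w})\le C)\ge p$ for all $n\ge n_0$. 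Finally, writing $\epsilon_n:=\mathbb{E}\xi_n$ for the error term $\xi_n$ appearing in the recursive domination, we have $\epsilon_n\to 0$, and since a convergent sequence is bounded, $B:=\sup_n \epsilon_n<\infty$.

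The recursion itself is then immediate: for $n\ge n_1:=\max(3,n_0+2)$, the recursive domination gives $T_{G^n}(\bar{v},\bar{w})\preceq \xi_n+\min(T,T')$ with $T,T'$ independent copies of $T_{G^{n-2}}(\bar{v},\bar{w})$; taking expectations and applying Lemma~\ref{lemma:minofiid} with threshold $C$ and probability $p\le \mathbb{P}(T_{G^{n-2}}(\bar{v},\bar{w})\le C)$ yields
\[
a_n \le \epsilon_n + pC + (1-p)\,a_{n-2}.
\]

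Next I would extract boundedness from the weaker inequality $a_n\le B+pC+(1-p)a_{n-2}$: iterating this within each of the two parity classes, a one-line induction gives $a_n\le \tfrac{B+pC}{p}+\max(a_{n_1},a_{n_1+1})$ for all $n\ge n_1$, so $\sup_n a_n<\infty$ and in particular $L:=\limsup_{n\to\infty} a_n<\infty$. Now taking $\limsup_{n\to\infty}$ in the full recursion and using $\epsilon_n\to 0$ together with $\limsup_n a_{n-2}=L$, we obtain $L\le pC+(1-p)L$, that is $L\le C$. Since $a_n-C\le \max(a_n-C,0)$ and this latter sequence is nonnegative with nonpositive $\limsup$, it tends to $0$; hence $a_n\le C+o(1)$, as claimed.

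The only genuine subtlety is that the fixed-point inequality $L\le pC+(1-p)L$ carries no information unless $L<\infty$, so one really must first prove $\limsup_n a_n<\infty$; this is exactly what the crude iteration with $\epsilon_n$ replaced by its uniform bound $B$ provides. Everything else — the invocation of Lemma~\ref{lemma:minofiid} with the correct $p$, and the harmless index shift in $\limsup_n a_{n-2}=\limsup_n a_n$ — is routine bookkeeping.
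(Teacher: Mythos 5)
Your proposal is correct and follows essentially the same route as the paper: you invoke the recursive stochastic domination $T_{G^n}(\bar{v},\bar{w})\preceq \xi_n+\min(T,T')$ together with Lemma~\ref{lemma:minofiid} to get the recursion $\mathbb{E}T_{G^n}(\bar{v},\bar{w})-C\leq o(1)+(1-p)\left(\mathbb{E}T_{G^{n-2}}(\bar{v},\bar{w})-C\right)$, exactly as in the paper. The only difference is that you spell out the bookkeeping (finiteness of the expectations, boundedness via iterating with the uniform bound $B$, and the $\limsup$ fixed-point step) which the paper compresses into ``this recursion implies,'' and that added detail is accurate.
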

\begin{proof}
Fix an $N$ and $p>0$ such that $\mathbb{P}\left( T_{G^n}(\bar{v}, \bar{w}) \leq C \right) \geq p$ whenever $n\geq N$. It follows by Proposition \ref{prop:recineq} and Lemma \ref{lemma:minofiid} that for any $n\geq N+2$ we have
\begin{equation}
\mathbb{E} T_{G^n}(\bar{v}, \bar{w}) - C  \leq o(1) + (1-p) \left( \mathbb{E} T_{G^{n-2}}(\bar{v}, \bar{w}) - C\right).
\end{equation}
As $\mathbb{E} T_{G^N}(\bar{v}, \bar{w})$ and $\mathbb{E} T_{G^{N+1}}(\bar{v}, \bar{w})$ are finite, this recursion implies that $\mathbb{E} T_{G^n}(\bar{v}, \bar{w}) - C \leq o(1)$, as desired.
\end{proof}

In particular, if the hypothesis in Proposition \ref{prop:recineq} holds for $C=t^*$, or at least $C=t^*+\varepsilon$ for all $\varepsilon>0$, this implies that $\mathbb{E}T_{G^n}(\bar{v}, \bar{w}) \leq t^*+o(1)$. As noted in the beginning of the Section \ref{sec:sharp}, this implies convergence in probability and $L^1$, as desired.

%Combining \eqref{eq:posprobub} with Proposition \ref{prop:recineq}, we see that if $f^{vw}_G(s, t) \leq -\Omega(t(t^*-t))$, then $\mathbb{E}T_{G^n}(\bar{v}, \bar{w}) \leq t^*+o(1)$. As $T_{G^n}(\bar{v}, \bar{w})$ is non-negative and asymptotically almost surely at least $t^*-o(1)$,

% it suffices that $\mathbb{E} T_{G^n}(\bar{v}, \bar{w}) \leq t^*+o(1)$ in order for $T_{G^n}(\bar{v}, \bar{w}) \rightarrow t^*$ in probability and $L^1$ as $n\rightarrow\infty$. Hence, by \eqref{eq:posprobub}, we can conclude that if $G$ is a loopless graph such that $f_{G}^{vw}(s, t) \leq -\Omega(t(t^*-t))$, then $T_{G^n}(\bar{v}, \bar{w})\rightarrow t^*$ in probability and $L^1$ as $n\rightarrow\infty$.

\subsection{Generalization to weighted graphs}\label{sec:weightedgraphs}
It remains to prove that $T_{G^n}(\bar{v}, \bar{w}) \leq t^*+o(1)$ with probability bounded away from zero as $n\rightarrow\infty$ in the case where $f_G^{vw}(s, t)\leq 0$ everywhere, but $f_G^{vw}(s, t) \not\leq -\Omega(t(t^*-t))$. In order to do so, we need to generalize the argument in Subsection \ref{sec:weakbound}. Below, by a weighted graph we mean a graph $H=(V, E)$ where each edge $e$ is assigned a positive weight $\lambda(e)$, called its intensity. The Cartesian product of two weighted graphs is the weighted version of $H_1 \square H_2$ where the edges $(e, v)$ and $(v, e)$ are given the same intensities as the edge $e$ in the respective factors. First-passage percolation is defined on such graphs by independently assigning the passage time of each edge $e$ according to $\Exp(\lambda(e))$ random variables.

The generating function of a weighted graph $H$ is defined as
\begin{equation}
m_H(v, w, t) = \sum_{\gamma \in \Gamma(v, w)} \left(\prod_{e \in \gamma}\lambda(e)\right) \frac{t^{\abs{\gamma}}}{\abs{\gamma}!}, 
\end{equation}
where the factor $\lambda(e)$ appears the same number of times that $\gamma$ traverses $e$. One can observe that the generating function for weighted graphs has the same basic properties as for unweighted graphs:
\begin{proposition}Let $H$ be a weighted graph, and let $v, w$ be vertices in $H$.
\begin{enumerate}[label=\roman*)]
\item For any $t\geq 0$, we have
\begin{equation*}
\mathbb{P}\left( T_H(v, w) \leq t \right) \leq m_H(v, w, t).
\end{equation*}
\item For any $s, t\geq 0$,
\begin{equation*}
\sum_{x\in H} m_H(v, x, s)m_H(x, w, t) = m_H(v, w, s+t).
\end{equation*}
\item For any weighted graphs $H_1$ and $H_2$, and any vertices $v_1, w_1 \in H_1$ and $v_2, w_2\in H_2$, we have
\begin{equation*}
m_{H_1 \square H_2}\left( (v_1, v_2), (w_1, w_2), t\right) = m_{H_1}(v_1, w_1, t) m_{H_2}(v_2, w_2, t). 
\end{equation*}
\end{enumerate}
\end{proposition}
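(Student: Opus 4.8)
The plan is to recycle, essentially verbatim, the three arguments already given for unweighted graphs — the union bound leading to \eqref{eq:patmostm}, Proposition \ref{prop:convolution}, and Proposition \ref{prop:multiplicativity} — while tracking the extra factor $\prod_{e\in\gamma}\lambda(e)$, which in each case splits multiplicatively in precisely the way required. For $i)$ I would first record the weighted analogue of Proposition \ref{prop:sumofexp1}: if $S=X_1+\dots+X_n$ with the $X_i$ independent and $X_i\sim\Exp(\lambda_i)$, then $\mathbb{P}(S\le t)\le\left(\prod_{i=1}^n\lambda_i\right)t^n/n!$, which follows from the same change of variables as in Proposition \ref{prop:sumofexp1} after bounding each density by $\lambda_i e^{-\lambda_i t_i}\le\lambda_i$. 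Applying this to $T_H(\gamma)$ — which for a self-avoiding $\gamma$ is a sum of $\abs{\gamma}$ independent exponentials with rates $\lambda(e)$, $e\in\gamma$ — and using the union bound over $\Gamma^{sa}_H(v,w)$ exactly as in \eqref{eq:firstmoment}, we obtain $\mathbb{P}(T_H(v,w)\le t)\le\sum_{\gamma\in\Gamma^{sa}_H(v,w)}\left(\prod_{e\in\gamma}\lambda(e)\right)t^{\abs{\gamma}}/\abs{\gamma}!$, which is dominated by the corresponding sum over all of $\Gamma_H(v,w)$, i.e. by $m_H(v,w,t)$.

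For $ii)$ I would repeat the proof of Proposition \ref{prop:convolution}: a path $\gamma\in\Gamma_H(v,w)$ decomposes, for each $0\le k\le\abs{\gamma}$, uniquely as a concatenation $\gamma_1+\gamma_2$ with $\abs{\gamma_1}=k$, and since intensities are read off edge by edge we have $\prod_{e\in\gamma}\lambda(e)=\left(\prod_{e\in\gamma_1}\lambda(e)\right)\left(\prod_{e\in\gamma_2}\lambda(e)\right)$ with multiplicities respected; the identity $\sum_{k}\binom{\abs{\gamma}}{k}s^k t^{\abs{\gamma}-k}=(s+t)^{\abs{\gamma}}$ is then used exactly as before, so the double sum over $x$ and over decompositions collapses to $m_H(v,w,s+t)$. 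For $iii)$ I would likewise repeat the proof of Proposition \ref{prop:multiplicativity}: projecting a path $\gamma$ in $H_1\square H_2$ onto its two coordinates yields $\gamma_1\in\Gamma_{H_1}(v_1,w_1)$ and $\gamma_2\in\Gamma_{H_2}(v_2,w_2)$; each pair $(\gamma_1,\gamma_2)$ has $\binom{\abs{\gamma_1}+\abs{\gamma_2}}{\abs{\gamma_1}}$ preimages; and each edge of $\gamma$ inherits the intensity of the corresponding edge of $\gamma_1$ or $\gamma_2$, so $\prod_{e\in\gamma}\lambda(e)=\left(\prod_{e\in\gamma_1}\lambda(e)\right)\left(\prod_{e\in\gamma_2}\lambda(e)\right)$ and the binomial factors cancel just as in the unweighted case.

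There is no genuine difficulty here; the one point deserving a word of care is the convergence of the series defining $m_H$. As in the unweighted setting one should assume the relevant weighted graphs have bounded degree and bounded intensities, so that the number of paths of length $\ell$ from $v$ times the product of their intensities grows at most exponentially in $\ell$; this is the weighted analogue of Lemma \ref{lemma:mbounds}, and it is what legitimises the rearrangements of absolutely convergent sums used above, as well as guaranteeing $m_H(v,w,t)<\infty$ for all $t\ge0$.
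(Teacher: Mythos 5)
Your proposal is correct and matches the paper's intent exactly: the paper itself states that the proofs are almost identical to the unweighted case (the union bound with the weighted analogue of Proposition \ref{prop:sumofexp1}, and the path-decomposition arguments of Propositions \ref{prop:convolution} and \ref{prop:multiplicativity}, with the intensity products splitting multiplicatively) and does not repeat them. Your added remark on convergence is consistent with the paper's standing assumption of bounded (out-)intensity, so there is nothing missing.
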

The proofs of these properties are almost identical to the unweighted case, and will not be repeated here.

Let $G$ be a graph with distinct vertices $v$ and $w$. We assume that $f_G^{vw}(s, t) \leq 0$ everywhere. We will consider $G$ as a weighted graph with constant intensity $1$. Let $l$ be the distance from $v$ to $w$ in $G$ and let $P$ be a graph with vertex set of the form $\{v_0, v_1, \dots, v_l\}$ where there is a directed edge from $v_i$ to $v_{i+1}$ for each $0\leq i < l$. To simplify notation below we also denote the end-points of $P$ by $v$ and $w$ respectively. We make $P$ into a weighted graph by assigning its edges a common intensity $\lambda$ such that $t^*_G(v, w) = t^*_P(v, w)$, that is $\lambda = \sqrt[l]{l!}/t^*_G(v, w)$. The key idea to prove the remaining part of Theorem \ref{thm:main} is to consider the first-passage time from $\bar{v}$ to $\bar{w}$ in graphs of the form $G^{n-k}\square P^{k}$. Note that $G^{n-k}\square P^k$ can be seen as a subgraph of $G^n$, though as the intensities of edges going in the last $k$ directions may be higher in the former than the latter it is not clear if first-passage times in one should dominate those in the other. 

The definition of the conditioned random walk on a graph can be naturally extended to a weighted graph $H$ by letting $\Delta_o=\Delta_o(H)$ denote the maximal net intensity of outgoing edges from any vertex $x\in H$ and, whenever the walker tries to take a step, picking each outgoing edge $e$ with probability $\lambda(e)/\Delta_o$. Again, we can note that if $H$ is a Cartesian product graph, then the projections of a conditioned random walk on $H$ onto each factor are independent conditioned random walks. Furthermore, for any path $\gamma \in \Gamma_{G^n}(\bar{v}, \bar{w})$, the probability that the walker traces $\gamma$ and takes its steps during $[t_1, t_1+dt_1)$, $[t_2, t_2+dt_2)\dots$ for $0< t_1 < \dots < t_{\abs{\gamma}} < t^*$ is $\left( \prod_{e\in \gamma} \lambda(e) \right)\,dt_1\dots\,dt_{\abs{\gamma}}$. As before, we let $L$ denote the number of steps taken by the walker, let $T_0=0$ and $T_i$ for any $1\leq i \leq L$ denote the time of the $i$:th step.

Using this generalization, one can see that the proof of Proposition \ref{prop:Tubrw} follows through also for weighted graphs with bounded out-intensity if one revises the upper bound on the probability that any fixed path is a shortcut, and is mindful of the different intensities when integrating over $t_1, \dots, t_{\abs{\gamma}}$. 

\begin{lemma}\label{lemma:Pconvex}
We have
\begin{equation}
\begin{split}
f_P^{vw}(s, t) &:= \sum_{x,y\in P} m_P(v, x, s) m_P(x, y, t) m_P(y, w, t^*-s-t) \ln\left(m_P(x, y, t)\right)\\
&\leq -\Omega\left( t(t^*-t) \right).
\end{split}
\end{equation}
\end{lemma}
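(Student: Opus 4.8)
The plan is to reduce the statement for the weighted path $P$ to an explicit one-variable computation, just as was done for $\mathbb{Z}$ in Theorem~\ref{thm:diagmu}, but now using the entropy interpretation from Section~\ref{sec:charf} to get convexity for free. First I would observe that $P$, together with $v=v_0$ and $w=v_l$, satisfies the hypothesis of Proposition~\ref{prop:charf}: the map $\sigma(v_i)=v_{l-i}$ reverses the path, and since $P$ is a directed path with all edge intensities equal to the common value $\lambda$, reversing indices is a (weighted) graph automorphism sending $v_i\mapsto v_{l-i}$; hence $(v,x)\cong(\sigma(x),w)$ and $(x,w)\cong(v,\sigma(x))$ for every vertex $x$. (Strictly, Proposition~\ref{prop:charf} is stated for unweighted graphs, but its proof only uses multiplicativity and the convolution identity for $m$, both of which hold for weighted graphs by the proposition just above; I would note this.) Consequently $f_P^{vw}(s,t)$ does not depend on $s$, is midpoint-convex hence convex in $t$ on $[0,t^*]$, and vanishes at $t=0$ and $t=t^*$, so $f_P^{vw}(t)\le 0$ automatically. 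What remains is the \emph{quantitative} bound $f_P^{vw}(t)\le-\Omega(t(t^*-t))$.

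For the quantitative part I would use Remark~\ref{rem:strongconvexity}, which says precisely that for any $G,v,w$ satisfying Proposition~\ref{prop:charf} one has $f^{vw}(t)\le-\Omega(t(t^*-t))$, the implied constant depending only on $-\tfrac12\mathbb{H}(X_{t^*/2})$ and on $t^*$. So it suffices to check that the conditioned random walk $\{X_t\}$ from $v$ to $w$ on the weighted path $P$ in time $t^*$ has $\mathbb{H}(X_{t^*/2})>0$, i.e.\ that $X_{t^*/2}$ is genuinely random. This is clear: by Proposition~\ref{prop:convisprob}, $\mathbb{P}(X_{t^*/2}=x)=m_P(v,x,t^*/2)\,m_P(x,w,t^*/2)$, and this is strictly positive for at least two distinct vertices — e.g.\ $x=v_0$ (via $m_P(v,v,t^*/2)=1$ and $m_P(v,w,t^*/2)>0$ since there is a path of length $l$) and $x=v_1$ (via $m_P(v,v_1,t^*/2)\ge\lambda t^*/2>0$ and $m_P(v_1,w,t^*/2)>0$). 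Hence $0<\mathbb{H}(X_{t^*/2})<\infty$, and Remark~\ref{rem:strongconvexity} gives the claim with an explicit positive constant.

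I expect essentially no serious obstacle here; the only care-points are bookkeeping rather than mathematics. The first is making sure the entropy machinery of Section~\ref{sec:charf} and Remark~\ref{rem:strongconvexity} applies verbatim to a \emph{directed, weighted} graph — one must check that $\mathbb{H}(X_b\mid X_a)$ is still well-defined and finite (finiteness follows because $X_{t}$ ranges over the finite vertex set $\{v_0,\dots,v_l\}$) and that the conditioned walk is still Markov (it is, as a time-homogeneous continuous-time chain conditioned on its terminal value, and directedness only removes transitions). The second is that $f_P^{vw}$ as written in the lemma is a priori a function of $(s,t)$, so one should invoke Lemma~\ref{lemma:indepofs} to legitimately drop the $s$ before quoting the $s$-free convexity argument. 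Once those two remarks are in place the proof is a two-line consequence of Proposition~\ref{prop:charf}, Lemma~\ref{lemma:indepofs}, and Remark~\ref{rem:strongconvexity}.
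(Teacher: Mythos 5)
Your overall strategy (symmetry $\Rightarrow$ $s$-independence and convexity, then Remark~\ref{rem:strongconvexity} plus positivity of $\mathbb{H}(X_{t^*/2})$ to get the quantitative $-\Omega(t(t^*-t))$ bound) is the right one, and your verification that $X_{t^*/2}$ has nontrivial distribution is correct. But the step where you apply Proposition~\ref{prop:charf} \emph{directly to $P$} has a genuine gap: the index-reversal map $v_i\mapsto v_{l-i}$ is \emph{not} a graph automorphism of $P$, because $P$ is a \emph{directed} path -- reversal sends the directed edge $v_i\to v_{i+1}$ to the pair $(v_{l-i},v_{l-i-1})$, which is not an edge of $P$. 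In fact $P$ is rigid (the unique source $v_0$ must be fixed, and then every vertex is), so the hypothesis of Proposition~\ref{prop:charf}, which demands for every $x$ an automorphism carrying $(v,x)$ to $(\sigma(x),w)$ and another carrying $(x,w)$ to $(v,\sigma(x))$, fails for $P$ as soon as $x\neq v,w$. Consequently you cannot quote Lemma~\ref{lemma:indepofs} or Proposition~\ref{prop:charf} for $P$ as stated, and the $s$-independence and convexity you rely on are not yet justified.

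The repair is exactly the paper's suggestion: view $P$ inside the doubly infinite directed chain $\vec{\mathbb{Z}}$ (given constant intensity $\lambda$, or rescale time by $\lambda$). There, translations \emph{are} automorphisms, and $\sigma(x)=l-x$ satisfies the hypothesis of Proposition~\ref{prop:charf} -- the automorphism realizing $(0,x)\cong(l-x,l)$ is translation by $l-x$, and the one realizing $(x,l)\cong(0,l-x)$ is translation by $-x$ -- just as in the Cox--Durrett remark following Theorem~\ref{thm:diagmu}. Since every directed path between vertices $0\le i\le j\le l$ of $\vec{\mathbb{Z}}$ stays inside $\{0,\dots,l\}$, all terms of $f^{0l}_{\vec{\mathbb{Z}}}$ with a vertex outside $\{0,\dots,l\}$ vanish and the generating functions agree with those of $P$, so $f_P^{vw}=f^{0l}_{\vec{\mathbb{Z}}}$ (after the intensity rescaling) and Remark~\ref{rem:strongconvexity} then yields the claimed bound; your entropy-positivity check, via Proposition~\ref{prop:convisprob}, can be kept verbatim. (Alternatively, one can bypass symmetry altogether and verify $s$-independence and convexity of $f_P^{vw}$ by direct computation with $m_P(v_i,v_j,t)=\lambda^{j-i}t^{j-i}/(j-i)!$, which is the paper's other suggested route.) Your parenthetical remark about extending the machinery to weighted directed graphs is fine, but it does not cure the missing automorphisms on $P$ itself.
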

This can be proven by a couple of lines of straight-forward but messy calculations by showing that $f_P^{vw}(s, t)$ does not depend on $s$ and is convex in $t$. Alternatively, this can be seen as an implication of Proposition \ref{prop:charf} and Remark \ref{rem:strongconvexity}, by considering $P$ as a subgraph of the doubly infinite directed chain $\vec{\mathbb{Z}}$.

\begin{proposition}\label{prop:weightedbarestimates}
Let $k=k_n$ be a sequence of integers such that $0\leq k_n \leq n$ and $k_n/n\rightarrow r \in (0, 1)$ as $n\rightarrow\infty$. Let $\{X_t\}_{t=0}^{t^*}$ be the conditioned random walk from $\bar{v}$ to $\bar{w}$ on $G^{n-k}\square P^k$ in time $t^*$. Then, for any sufficiently small constant $\varepsilon>0$ we have:
\begin{enumerate}[label=\roman*)]
\item With probability at least $\left(1-O\left(n^{-1}\right)\right) e^{-(\Delta_o(G) t^*)^2}$, $X$ is self-avoiding.
\item $$\mathbb{E}\left[\sum_{T_i<T_j<T_i+\varepsilon} m_{G^{n-k}\square P^{k}}(X_{T_i}, X_{T_j}, T_j-T_i)\right] = O(1).$$
\item For any sufficiently small fixed $\alpha>0$,
$$\mathbb{E}\left[\sum_{T_j \geq T_i+\varepsilon} m_{G^{n-k}\square P^{k}}(X_{T_i}, X_{T_j}, T_j-T_i)^\alpha\right] = O(1).$$ 
\end{enumerate}
\end{proposition}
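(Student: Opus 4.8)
The plan is to mirror the proofs of Propositions \ref{prop:selfavoid}, \ref{prop:barsmallt}, and \ref{prop:moderatebariance} from the unweighted setting, carefully tracking how the presence of $k = k_n \sim rn$ factors of $P$ (each with edge intensity $\lambda$) changes the bookkeeping. The central fact that makes all three parts go through is the following: if $\{Y_t\}_{t=0}^{t^*}$ denotes the conditioned random walk on a \emph{single} copy of $G$ and $\{Z_t\}_{t=0}^{t^*}$ the conditioned random walk on a single copy of the weighted chain $P$, then by coordinate independence the coordinates of $X$ split into $n-k$ i.i.d.\ copies of $Y$ and $k$ i.i.d.\ copies of $Z$. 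By Lemma \ref{lemma:spread} (which applies to $G$) and an analogous statement for $P$ -- which follows because $P$ is a directed chain of length $l\geq 1$ from $v$ to $w$ so the walker must change a given $P$-coordinate with probability $\Omega(t)$ in any window of length $t$ -- we get that both $\mathbb{E}[m_G(Y_s, Y_{s+t}, t)^\alpha]$ and $\mathbb{E}[m_P(Z_s, Z_{s+t}, t)^\alpha]$ are $e^{-\Omega(t)}$ (for part i)) or $e^{-\Omega((t^*-t)\alpha)}$ (for part iii)), uniformly in $s$, for $\alpha$ sufficiently small. This is precisely where Lemma \ref{lemma:Pconvex} enters for part iii): since $f_P^{vw}(s,t)\leq -\Omega(t(t^*-t))$ and $f_G^{vw}(s,t)\leq 0$ by hypothesis, the Taylor expansion \eqref{eq:mtaylor} gives that the $P$-factor moments decay like $e^{-\Omega((t^*-t)\alpha)}$ while the $G$-factor moments are at most $1+O((t^*-t)\alpha^2)$, which after raising to the $k$-th and $(n-k)$-th powers respectively and using $k\sim rn$, $r>0$, still yields overall exponential decay in $n$.

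\textbf{For part i)}, I would repeat verbatim the almost-self-avoiding argument of Proposition \ref{prop:selfavoid}. The enumeration of the three ways a path fails to be almost self-avoiding is unchanged; in each case the bound involves a product $\prod \mathbb{P}(X^k_s = X^k_{s+t})$ over all but one or two coordinates, and since \emph{every} coordinate (whether a $G$-copy or a $P$-copy) contributes a factor $e^{-\Omega(t)}$, the product is $e^{-\Omega(nt)}$ and the integrals evaluate to $O(n^{-1})$ exactly as before. The passage from almost-self-avoiding to self-avoiding uses that each detour of length $2$ changes one coordinate twice; in $G^{n-k}\square P^k$ there are at most $(n-k)\Delta_o(G)^2 + 0$ such detours at each vertex, since $P$ has no detours of length $2$ (it is an oriented path), so the constant in the exponent is at most $(\Delta_o(G) t^*)^2$ -- giving exactly the stated bound. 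The only subtlety is that the distance from $\bar v$ to $\bar w$ in $G^{n-k}\square P^k$ is at least $n$ (each of the $n$ coordinates must change), so the step $\exp(\Delta_o^2 n (t^*)^2/\abs{\gamma}) \leq e^{(\Delta_o t^*)^2}$ still holds.

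\textbf{For parts ii) and iii)}, I would reproduce the case analysis of Propositions \ref{prop:barsmallt} and \ref{prop:moderatebariance}: split the sum according to whether $i=0$, $K_i=K_j$, or $K_i\neq K_j$, where $K_i$ is the coordinate updated at step $i$. Using Proposition \ref{prop:boundcondbyuncond} (which holds for weighted graphs with bounded out-intensity, as noted in the text), the probability of a given coordinate changing in prescribed small windows is $O(1)$ or $O(t)$ as appropriate, with constants depending only on $\Delta_o(G)$ and $\lambda$. Each resulting integral carries a product $\prod_{k} \mathbb{E}[m(X^k_s, X^k_{s+t}, t)^\alpha]$ over roughly $n$ coordinates, which by the discussion above is at most $e^{-\Omega(nt)}$ for part ii) (with $\alpha=1$, $t\leq\varepsilon$) and at most $e^{-\Omega(n(t^*-t)\alpha)}$ for part iii) (for $t\geq\varepsilon$); the prefactors $n$ or $n^2$ are absorbed and the integrals evaluate to $O(1)$. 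Finally the elementary inequality $\big(\sum a_i\big) \leq \big(\sum a_i^\alpha\big)^{1/\alpha}$ for $0<\alpha\leq 1$ is not needed here since ii) and iii) are stated directly in terms of the $\alpha$-th powers (with $\alpha=1$ in ii)).

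\textbf{The main obstacle} is purely bookkeeping: ensuring that the $G$-coordinate contributions (whose moments are only bounded by $1+O((t^*-t)\alpha^2)$, not by something smaller than $1$, when $f_G^{vw}\leq 0$ is not strict) do not spoil the exponential decay. This is handled by the fact that there are $k_n\sim rn$ genuine $P$-coordinates, each contributing a factor strictly less than $1$ by Lemma \ref{lemma:Pconvex}, so that $\prod_{P\text{-coords}} e^{-\Omega((t^*-t)\alpha)} \cdot \prod_{G\text{-coords}}(1+O((t^*-t)\alpha^2)) = e^{-\Omega(rn(t^*-t)\alpha) + O(n(t^*-t)\alpha^2)}$, which for $\alpha$ small enough (depending on $r$ but not on $n$) is still $e^{-\Omega(n(t^*-t)\alpha)}$. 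The hypothesis $r>0$ is essential and is exactly why it appears in the statement; the choice of how small $\alpha$ and $\varepsilon$ must be is then made after fixing $r$, which is legitimate since the proposition concerns a fixed sequence $k_n$ with a fixed limit $r$.
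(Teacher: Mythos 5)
Your proposal is correct and, for parts ii) and iii), is essentially the paper's own argument: the same case analysis as in Propositions \ref{prop:barsmallt} and \ref{prop:moderatebariance}, with the Taylor expansion \eqref{eq:mtaylor} giving $G$-factors bounded by $e^{O((t^*-t)\alpha^2)}$ (using $f_G^{vw}\leq 0$) and the $\sim rn$ many $P$-factors bounded by $e^{-\Omega((t^*-t)\alpha)+O((t^*-t)\alpha^2)}$ via Lemma \ref{lemma:Pconvex}, and then $\alpha$ chosen small depending on $r>0$. For part i) the paper takes a shortcut you did not: since $P$ is acyclic, its coordinates are monotone, so $X$ is self-avoiding whenever its projection onto the $G^{n-k}$ factors is, and Proposition \ref{prop:selfavoid} applies verbatim to that projection; your alternative of rerunning the almost-self-avoiding computation on $G^{n-k}\square P^k$ also works (the $P$-analogue of Lemma \ref{lemma:spread} you invoke does hold, and $P$ contributes no detours of length two), it is just somewhat more laborious than the paper's one-line reduction.
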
 
\begin{proof}
$i)$ As $P$ does not contain cycles, it is easy to see that $X$ is self-avoiding if the conditioned random walk on $G^{n-k}$ formed by the first $n-k$ coordinates is so. The statement follows by Proposition \ref{prop:selfavoid}. $ii)$ This can be shown in the same way as Proposition \ref{prop:barsmallt}.  $iii)$ We can bound this expression analogously to \eqref{eq:malphaest}, except that now about $rn$ factors of the form $\mathbb{E}\left[ m_G(X^i_s, X^i_{s+t}, t)^\alpha\right]$ are replaced by $\mathbb{E}\left[ m_P(X^i_s, X^i_{s+t}, t)^\alpha\right]$. The idea is that these factors will make the integrals stay small even when we do not have $f_G^{vw}(s, t) = -\Omega\left( t(t^*-t)\right)$.

As $f_G^{vw}(s, t) \leq 0$, we have by \eqref{eq:mtaylor} that $E\left[ m_G(X^i_s, X^i_{s+t}, t)^\alpha\right] \leq e^{O\left( (t^*-t)\alpha^2 \right)}$ for any $1\leq i \leq n-k$. Similarly, for any $n-k+1 \leq i \leq n$, we have
\begin{align*}
\mathbb{E}\left[ m_P(X^i_s, X^i_{s+t}, t)^\alpha \right] &= \sum_{x, y \in P} m_P(v, x, s) m_P(x, y, t)^{1+\alpha} m_P(y, w, t^*-s-t)\\
&= 1+\alpha f_P^{vw}(s, t) + O\left( (t^*-t)\alpha^2\right).
\end{align*}
By Lemma \ref{lemma:Pconvex}, this expression is at most $e^{-\Omega\left((t^*-t)\alpha\right) + O\left( (t^*-t)\alpha^2\right)}$ for $\varepsilon \leq t \leq t^*$. Hence, we have
\begin{align*}
&\mathbb{E}\left[\sum_{T_j \geq T_i+\varepsilon} m_{G^{n-k}\square P^{k}}(X_{T_i}, X_{T_j}, T_j-T_i)^\alpha\right]\\
&\qquad \leq n\int_{\varepsilon}^{t^*} O(1) e^{-rn\,\Omega\left( (t^*-t) \alpha\right) + n\,O\left( (t^*-t)\alpha^2\right) }\,dt\\
&\qquad + n^2 \int_{\varepsilon}^{t^*}\int_0^{t^*-t} O(1) e^{-rn\,\Omega\left( (t^*-t) \alpha\right) + n\,O\left( (t^*-t)\alpha^2\right)}\,ds\,dt,
\end{align*}
which is order one provided $\alpha > 0$ is sufficiently small so that the exponents in the respective integrands are dominated by $-rn\Omega\left( (t^*-t) \alpha\right)$.
\end{proof}

We have the following immediate consequence of Propositions \ref{prop:Tubrw} and \ref{prop:weightedbarestimates}:
\begin{proposition}\label{prop:fppperturbedGn}
For any $k=k_n$ such that $k/n\rightarrow r\in(0, 1)$, we have
\begin{equation}
\liminf_{n\rightarrow\infty}\mathbb{P}\left( T_{G^{n-k}\square P^k}(\bar{v}, \bar{w}) \leq t^*\right) > 0.
\end{equation}
\qed
\end{proposition}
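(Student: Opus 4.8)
The plan is to run exactly the argument from the end of Subsection \ref{sec:weakbound}, now feeding the estimates of Proposition \ref{prop:weightedbarestimates} into the lower bound of Proposition \ref{prop:Tubrw}, which (as remarked after Lemma \ref{lemma:Pconvex}) remains valid for weighted graphs of bounded out-intensity. Write $H = G^{n-k}\square P^k$; by multiplicativity of the generating function and the choice of $\lambda$ we have $m_H(\bar{v},\bar{w},t) = m_G(v,w,t)^{n-k} m_P(v,w,t)^k$, so $t^*_H(\bar{v},\bar{w}) = t^*$. Let $\{X_t\}_{t=0}^{t^*}$ be the conditioned random walk from $\bar{v}$ to $\bar{w}$ on $H$, and set $C(X) = \sum_{0\le i<j\le L} m_H(X_{T_i},X_{T_j},T_j-T_i)$ as in Proposition \ref{prop:Tubrw}, split as $C(X) = C_1(X) + C_2(X)$ according to whether $T_j - T_i \le \varepsilon$ or $T_j - T_i > \varepsilon$, for a fixed sufficiently small $\varepsilon>0$.

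First I would control $C_1(X)$: part $ii)$ of Proposition \ref{prop:weightedbarestimates} gives $\mathbb{E} C_1(X) = O(1)$, so by Markov's inequality $\mathbb{P}(C_1(X) > M_1)$ can be made smaller than any prescribed $\delta>0$ by choosing $M_1 = M_1(\delta)$ large, uniformly in $n$. For $C_2(X)$, fix $\alpha\in(0,1]$ small enough that part $iii)$ applies; the elementary inequality $C_2(X) \le \big(\sum_{T_j - T_i > \varepsilon} m_H(X_{T_i},X_{T_j},T_j-T_i)^\alpha\big)^{1/\alpha}$ combined with $iii)$ yields $\mathbb{E}[C_2(X)^\alpha] = O(1)$, so again by Markov $\mathbb{P}(C_2(X) > M_2) < \delta$ for $M_2 = M_2(\delta)$ large, uniformly in $n$. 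Hence with $M := M_1 + M_2$ we get $\mathbb{P}(C(X) > M) < 2\delta$ for all $n$.

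Finally, combine with part $i)$. Choosing $\delta = e^{-(\Delta_o(G) t^*)^2}/4$, the events $\{X\text{ self-avoiding}\}$ and $\{C(X)\le M\}$ intersect in an event of probability at least $(1-o(1))e^{-(\Delta_o(G) t^*)^2} - 2\delta \ge \tfrac{1}{4} e^{-(\Delta_o(G) t^*)^2}$ for $n$ large. Applying Proposition \ref{prop:Tubrw} with $H$ in place of the graph there,
\begin{equation*}
\mathbb{P}\left( T_H(\bar{v},\bar{w}) \le t^* \right) \ge \mathbb{E}\big[\mathbbm{1}_{X\text{ s.a.}}\, e^{-C(X)}\big] \ge e^{-M}\,\mathbb{P}\big( X\text{ s.a.},\, C(X) \le M \big),
\end{equation*}
and the right-hand side is at least $\tfrac{1}{4} e^{-M - (\Delta_o(G) t^*)^2} > 0$ for all large $n$, giving the claimed positive $\liminf$.

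Since all the substantive work is already contained in Propositions \ref{prop:Tubrw} and \ref{prop:weightedbarestimates}, there is no genuine obstacle; the only things to watch are that the transfer of Proposition \ref{prop:Tubrw} to the weighted setting is legitimate (its shortcut-counting step uses only that $m_H(x,y,t)$ dominates the expected number of self-avoiding $x$--$y$ paths of passage time $\le t$, which holds for weighted graphs) and that the constants $M_1, M_2, \delta, M$ can be chosen independently of $n$ — which they can, precisely because every $O(1)$ bound in Proposition \ref{prop:weightedbarestimates} is uniform in $n$.
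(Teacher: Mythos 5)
Your proposal is correct and follows the paper's intended route: the paper treats this proposition as an immediate consequence of Propositions \ref{prop:Tubrw} and \ref{prop:weightedbarestimates}, combined exactly as in \eqref{eq:posprobub}, and your Markov-inequality splitting of $C(X)$ into $C_1$ and $C_2$ reproduces that argument in the weighted setting. Nothing further is needed.
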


\begin{lemma}\label{lemma:orderstats}
For any integers $0 \leq K \leq N$, any $t>0$ and any subset $I\subseteq \{1, \dots, N\}$ of size $K$ we have
\begin{equation}\label{eq:orderstats}
\frac{N!}{t^N}\idotsint\limits_{\substack{t_1+\dots+t_N \leq t\\ t_1, \dots, t_N \geq 0 }} \mathbbm{1}_{\sum_{i\in I} t_i> 2t\frac{K}{N}} \,dt_1\dots\,dt_N \leq \frac{2}{K}.
\end{equation}
\end{lemma}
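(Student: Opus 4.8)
The plan is to recognize the left-hand side as a probability and estimate it by Chebyshev's inequality. The simplex $\Delta=\{t_1,\dots,t_N\ge 0,\ t_1+\dots+t_N\le t\}$ has volume $t^N/N!$, so $\frac{N!}{t^N}\,dt_1\cdots dt_N$ is a probability measure on it; let $(\tau_1,\dots,\tau_N)$ be a uniform random point of $\Delta$. Since the uniform law on $\Delta$ is invariant under permuting coordinates, $\sum_{i\in I}\tau_i$ has the same distribution for every $K$-element $I$, so we may take $I=\{1,\dots,K\}$ and set $Z=\tau_1+\dots+\tau_K$; the claim becomes $\mathbb{P}(Z>2tK/N)\le 2/K$. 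If $K=0$ the event is empty, so we may assume $K\ge1$.

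Next I would compute the first two moments of $Z$. Using the classical Liouville--Dirichlet integral
\[
\int_{\Delta} t_1^{a_1-1}\cdots t_N^{a_N-1}\,dt \;=\; t^{a_1+\dots+a_N}\,\frac{\Gamma(a_1)\cdots\Gamma(a_N)}{\Gamma(1+a_1+\dots+a_N)},
\]
with $a_i\in\{1,2,3\}$, together with the permutation symmetry, one obtains $\mathbb{E}[\tau_1]=t/(N+1)$, $\mathbb{E}[\tau_1^2]=2t^2/((N+1)(N+2))$ and $\mathbb{E}[\tau_1\tau_2]=t^2/((N+1)(N+2))$, hence
\[
\mathbb{E}[Z]=\frac{Kt}{N+1},\qquad \operatorname{Var}(Z)=\frac{t^2K(N+1-K)}{(N+1)^2(N+2)}.
\]
(Equivalently, one may invoke the standard representation $(\tau_1,\dots,\tau_N)\overset{d}{=}t\,(E_1,\dots,E_N)/(E_1+\dots+E_{N+1})$ with the $E_i$ i.i.d.\ $\Exp(1)$, reducing everything to sums of independent exponentials, or note that $Z/t\sim\mathrm{Beta}(K,N+1-K)$.)

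Finally, since $\tfrac{2tK}{N}-\mathbb{E}[Z]=\tfrac{tK(N+2)}{N(N+1)}>0$, Chebyshev's inequality yields
\[
\mathbb{P}\!\left(Z>\tfrac{2tK}{N}\right)\;\le\;\frac{\operatorname{Var}(Z)}{\bigl(tK(N+2)/(N(N+1))\bigr)^{2}}\;=\;\frac{N^2(N+1-K)}{K(N+2)^3}\;\le\;\frac1K\;\le\;\frac2K,
\]
where the penultimate inequality uses $N^2(N+1-K)\le N^2(N+2)\le(N+2)^3$, valid because $1\le K\le N$. I do not foresee a genuine obstacle here; the only delicate points are getting $\operatorname{Var}(Z)$ exactly right — the factor $N+1-K$ is essential, since it makes the bound vanish as $K\to N$ (where the event is in fact empty), so an uncentered estimate such as $\mathbb{P}(Z>c)\le\mathbb{E}[Z^2]/c^2$ would be too weak — and checking the final elementary arithmetic inequality.
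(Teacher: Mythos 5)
Your proof is correct, and it takes a genuinely different route to the same $2/K$-type bound (in fact you obtain the slightly stronger constant $1/K$). Both arguments begin by viewing $\frac{N!}{t^N}\,dt_1\cdots dt_N$ probabilistically and end with Chebyshev, but they apply Chebyshev to different random variables. The paper first substitutes $s_i=\sum_{j\le i}t_j/t$, turning the left-hand side into the probability that the $K$-th order statistic of $N$ i.i.d.\ $\mathrm{U}(0,1)$ variables exceeds $2K/N$, i.e.\ that fewer than $K$ of them land in $[0,2K/N]$; Chebyshev is then applied to the Binomial$(N,2K/N)$ count, whose mean $2K$ and variance at most $2K$ give $2K/K^2=2/K$ with no moment computation on the simplex (the case $2K>N$ being dismissed as trivial since then the event is empty). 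You instead keep the partial sum $Z=\sum_{i\in I}\tau_i$ itself, identify $Z/t$ as $\mathrm{Beta}(K,N+1-K)$ (or compute the Dirichlet moments directly), and apply Chebyshev to $Z$ around its mean $Kt/(N+1)$; your moment formulas and the final estimate $\frac{N^2(N+1-K)}{K(N+2)^3}\le\frac1K$ check out, and your symmetry reduction to $I=\{1,\dots,K\}$ and the $K=0$ remark cover the edge cases. What the paper's route buys is that the variance bound is immediate; what yours buys is a single uniform argument for all $1\le K\le N$ with an exact variance, and a marginally better constant. Either proof is acceptable here, since the lemma is only used with $K$ of order $n$.
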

\begin{proof}
Note that the statement is trivially true unless $2K \leq N$. Let us, without loss of generality assume that $I_\gamma = \{1, 2, \dots, \abs{I_\gamma}\}$. By the substitution $s_i = \sum_{j=1}^i t_i/t$, the left hand side of \eqref{eq:orderstats} becomes
\begin{equation}
N! \idotsint_{0 \leq s_1 \leq \dots \leq s_N \leq 1} \mathbbm{1}_{s_K > 2 \frac{K}{N}} \,ds_1\dots \,ds_N.
\end{equation}
We can interpret this integral as the probability that, when given $N$ independent $\operatorname{U}(0, 1)$ random variables, less than $K$ of them have values less than $2\frac{K}{N}$. The bound follows by Chebyshev's inequality.
\end{proof}

\begin{proposition}\label{prop:perturbationsmall}
Suppose $k/n \rightarrow r\in (0, 1)$. With probability tending to one as $n\rightarrow\infty$, there are no self-avoiding paths $\gamma$ from $\bar{v}$ to $\bar{w}$ in $G^{n-k}\square P^k$ that satisfy both of:
\begin{itemize}
\item $T_{G^{n-k}\square P^k}(\gamma) \leq t^*$
\item the total passage time of edges going in the last $k$ directions is at least $2t^* \frac{k}{n}$.
\end{itemize}
\end{proposition}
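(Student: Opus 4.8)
The plan is to run a first-moment (union bound) over self-avoiding paths, with the uniform order-statistics estimate of Lemma~\ref{lemma:orderstats} doing the main work once a structural observation aligns the thresholds.

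First I would record the combinatorics of paths in $G^{n-k}\square P^{k}$. Projecting any path $\gamma$ from $\bar v$ to $\bar w$ onto a coordinate gives a $v$-$w$ walk, in $G$ for the first $n-k$ coordinates and in $P$ for the last $k$; each such walk has length at least $l=\dist_G(v,w)=\dist_P(v,w)$, so $\abs{\gamma}\geq nl$. Moreover, since every edge of $P$ points forward, the only $v$-$w$ walk in $P$ is the length-$l$ directed path, so \emph{every} such $\gamma$ uses exactly $kl$ edges in the last $k$ directions; write $I_\gamma$ for this edge set. The consequence I need is $\abs{I_\gamma}/\abs{\gamma}=kl/\abs{\gamma}\leq k/n$, which is precisely what allows the uniform threshold $2t^*k/n$ from the statement to be fed into Lemma~\ref{lemma:orderstats}.

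Next, fix a self-avoiding $\gamma$ with $\abs{\gamma}=N$. Its $N$ edge passage times are independent, $\Exp(1)$ on the $N-kl$ unit-intensity edges and $\Exp(\lambda)$ on the $kl$ edges of $I_\gamma$, so the joint density is bounded pointwise by $\lambda^{kl}$. Estimating the probability of the bad event by this density times the Lebesgue volume of $\{\sum_i t_i\leq t^*,\ \sum_{i\in I_\gamma}t_i\geq 2t^*k/n\}$ and then applying Lemma~\ref{lemma:orderstats} with $K=kl$, $t=t^*$ (valid since $2t^*k/n\geq 2t^*\abs{I_\gamma}/N$) gives
\begin{equation*}
\mathbb{P}\Big(T_{G^{n-k}\square P^k}(\gamma)\leq t^*,\ \sum_{e\in I_\gamma}T(e)\geq 2t^*\tfrac{k}{n}\Big)\leq\lambda^{kl}\cdot\frac{2}{kl}\cdot\frac{(t^*)^{N}}{N!}.
\end{equation*}
Since $\prod_{e\in\gamma}\lambda(e)=\lambda^{kl}$ for every self-avoiding $v$-$w$ path, summing over all self-avoiding $\gamma$ and then enlarging the sum to all of $\Gamma_{G^{n-k}\square P^k}(\bar v,\bar w)$ (which only increases it) bounds the union-bound probability by $\frac{2}{kl}\,m_{G^{n-k}\square P^k}(\bar v,\bar w,t^*)$. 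By multiplicativity of the weighted generating function together with the normalisations $t^*_G(v,w)=t^*_P(v,w)=t^*$, this last quantity equals $m_G(v,w,t^*)^{n-k}m_P(v,w,t^*)^{k}=1$, so the probability is at most $2/(kl)$, which tends to $0$ as $n\to\infty$ because $k\to\infty$.

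The one step that needs care is the threshold comparison $kl/\abs{\gamma}\leq k/n$ — the fact that every $v$-$w$ path in the product has length at least $nl$ while spending exactly $kl$ edges in the $P$-block — since without it Lemma~\ref{lemma:orderstats} cannot be invoked with the uniform cutoff $2t^*k/n$. Everything after that is routine bookkeeping, and the sum collapses to $1$ precisely because $\lambda$ was chosen so that $P$ has the same critical time as $G$.
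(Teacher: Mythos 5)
Your proposal is correct and follows essentially the same route as the paper: a first-moment bound over paths, using the joint density bound $\lambda^{kl}$ together with the observation $\abs{I_\gamma}=k\,\dist_G(v,w)$ and $\abs{\gamma}\geq n\,\dist_G(v,w)$ to invoke Lemma \ref{lemma:orderstats}, and then collapsing the sum to $m_{G^{n-k}\square P^k}(\bar v,\bar w,t^*)=1$ to get the bound $2/(k\,\dist_G(v,w))\rightarrow 0$. The only cosmetic difference is that you sum over self-avoiding paths and then enlarge to all paths, while the paper bounds the expected number over all paths directly; the estimates are identical.
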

\begin{proof}
For each path $\gamma \in \Gamma_{G^{n-k}\square P^k}(\bar{v}, \bar{w})$, let $I_\gamma$ denote the indices of edges in $\gamma$ going in the last $k$ directions. Note that $\abs{I_\gamma}=k\operatorname{dist}_G(v, w)$ and $\abs{\gamma}\geq n \operatorname{dist}_G(v, w)$ for any such $\gamma$. Using Lemma \ref{lemma:orderstats}, the expected number of paths with the proposed two properties is at most
\begin{align*}
&\sum_{\gamma \in \Gamma_{G^{n-k}\square P^k}(\bar{v}, \bar{w})} \left( \prod_{e\in \gamma} \lambda(e)\right) \idotsint\limits_{\substack{t_1+\dots+t_{\abs{\gamma}}\leq t^*\\ t_1, \dots, t_{\abs{\gamma}}\geq 0 } } \mathbbm{1}_{\sum_{i\in I_\gamma} t_i > 2 t^* \frac{k \operatorname{dist}_G(v, w)}{\abs{\gamma}}} \,dt_1\dots\,dt_{\abs{\gamma}}\\
&\qquad\leq \sum_{\gamma \in \Gamma_{G^{n-k}\square P^k}(\bar{v}, \bar{w})} \left( \prod_{e\in \gamma} \lambda(e)\right) \frac{2}{k\operatorname{dist}_G(v, w)} \frac{ (t^*)^{\abs{\gamma}}}{\abs{\gamma}!}\\
&\qquad = \frac{2}{k\operatorname{dist}_G(v, w)} m_{G^{n-k}\square P^k}(\bar{v}, \bar{w}, t^*)= \frac{2}{k\operatorname{dist}_G(v, w)},
%&\qquad = \sum_{\gamma \in \Gamma_{G^{n-k}\square P^k}(\bar{v}, \bar{w})} \left( \prod_{e\in \gamma} \lambda(e)\right) \idotsint\limits_{ 0 \leq t_1 \leq \dots \leq t_{\abs{\gamma}} \leq t^* } \mathbbm{1}_{t_K > 2 t^* \frac{K}{N}} \,dt_1\dots\,dt_{\abs{\gamma}}\\
%&\qquad \leq \frac{1}{K} m_{G^{n-k}\square P^k}(bar{v}, \bar{w}, t^*) = \frac{1}{K},
\end{align*}
which tends to $0$ as $n\rightarrow\infty$.
\end{proof}
\begin{proposition}
For any $\varepsilon > 0$, we have
\begin{equation}
\liminf_{n\rightarrow\infty} \mathbb{P}\left( T_{G^n}(\bar{v}, \bar{w}) \leq t^*+\varepsilon\right) > 0.
\end{equation}
\end{proposition}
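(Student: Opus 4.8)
The plan is to transport the positive-probability upper bound of Proposition~\ref{prop:fppperturbedGn} from $G^{n-k}\square P^k$ back to $G^n$ by an explicit coupling, using Proposition~\ref{prop:perturbationsmall} to absorb the discrepancy between the two graphs into the slack $\varepsilon$. First I would fix the geodesic $v=u_0,u_1,\dots,u_l=w$ in $G$ (with $l=\dist_G(v,w)$) used to build $P$; since $u_0,\dots,u_l$ are distinct, the map $\iota$ sending a vertex $(x_1,\dots,x_{n-k},i_1,\dots,i_k)$ of $G^{n-k}\square P^k$ to $(x_1,\dots,x_{n-k},u_{i_1},\dots,u_{i_k})$ of $G^n$ is an injective graph homomorphism with $\iota(\bar{v})=\bar{v}$ and $\iota(\bar{w})=\bar{w}$. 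Because $G$ has a path of length $l$ from $v$ to $w$ we have $t^*\le\sqrt[l]{l!}$, hence the common intensity $\lambda=\sqrt[l]{l!}/t^*$ of the edges of $P$ satisfies $\lambda\ge 1$.

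Next I would set up the coupling. Sample i.i.d.\ $\Exp(1)$ weights $\{W_e\}_{e\in E(G^n)}$ and run first-passage percolation on $G^n$ with them. To the weighted graph $G^{n-k}\square P^k$ assign each edge $f$ going in one of the first $n-k$ directions the weight $W_{\iota(f)}$, and each edge $f$ going in one of the last $k$ directions the weight $W_{\iota(f)}/\lambda$. Since $\iota$ is injective on edges and the two edge types involve disjoint coordinate blocks, these weights are independent with the correct marginals ($\Exp(1)$, resp.\ $\Exp(\lambda)$), so this is a genuine coupling, and for every path $\gamma$ in $G^{n-k}\square P^k$ — writing $B(\gamma)$ for the total passage time, in the coupled percolation on $G^{n-k}\square P^k$, of the edges of $\gamma$ going in the last $k$ directions — one has
\begin{equation*}
T_{G^n}(\iota(\gamma)) \;=\; T_{G^{n-k}\square P^k}(\gamma) + (\lambda-1)\,B(\gamma).
\end{equation*}

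Now I would combine the two propositions. Given $\varepsilon>0$, pick $r\in(0,1)$ with $2(\lambda-1)t^*r<\varepsilon$ (any $r$ works when $\lambda=1$) and set $k_n=\lfloor rn\rfloor$. By Proposition~\ref{prop:fppperturbedGn} (and the construction behind Proposition~\ref{prop:Tubrw}), with probability bounded away from $0$ there is a self-avoiding path $\gamma$ from $\bar{v}$ to $\bar{w}$ in $G^{n-k_n}\square P^{k_n}$ with $T_{G^{n-k_n}\square P^{k_n}}(\gamma)\le t^*$; by Proposition~\ref{prop:perturbationsmall}, with probability tending to $1$ every such path has $B(\gamma)<2t^*k_n/n\le 2t^*r$. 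On the intersection of these two events, which has positive $\liminf$ probability, $\iota(\gamma)$ joins $\bar{v}$ to $\bar{w}$ in $G^n$ and the displayed identity gives $T_{G^n}(\iota(\gamma))\le t^*+(\lambda-1)\cdot 2t^*r<t^*+\varepsilon$, so $T_{G^n}(\bar{v},\bar{w})\le t^*+\varepsilon$ there. Therefore $\liminf_{n\to\infty}\mathbb{P}\big(T_{G^n}(\bar{v},\bar{w})\le t^*+\varepsilon\big)>0$.

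The main thing to get right — everything else is bookkeeping — is the coupling: that the weights handed to $G^{n-k}\square P^k$ really are independent with the right laws (hence one must be careful about multiple and directed edges of $G$, i.e.\ that the single fixed geodesic does embed $P$), and that the identity $T_{G^n}(\iota(\gamma))=T_{G^{n-k}\square P^k}(\gamma)+(\lambda-1)B(\gamma)$ holds path by path. One should also keep in mind that $r$ must be chosen small in terms of $\varepsilon$, $t^*$ and $\lambda$, which is permitted because Propositions~\ref{prop:fppperturbedGn} and~\ref{prop:perturbationsmall} are valid for every fixed ratio $r\in(0,1)$.
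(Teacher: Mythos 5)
Your proposal is correct and follows essentially the same route as the paper: fix $k_n$ with $k_n/n\to r$, couple $T_{G^n}$ and $T_{G^{n-k}\square P^k}$ by reusing the same passage times with the last $k$ directions rescaled by $\lambda$, combine Propositions \ref{prop:fppperturbedGn} and \ref{prop:perturbationsmall} to get $T_{G^n}(\bar v,\bar w)\le t^*\bigl(1+2(\lambda-1)\tfrac{k}{n}\bigr)$ with positive liminf probability, and then take $r$ small relative to $\varepsilon$. Your version merely spells out the embedding $\iota$ and the identity $T_{G^n}(\iota(\gamma))=T_{G^{n-k}\square P^k}(\gamma)+(\lambda-1)B(\gamma)$, which the paper leaves implicit.
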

\begin{proof}
Let $k=k_n$ be a sequence of integers such that $k/n\rightarrow r\in (0, 1)$. Fix a suitable mapping of $G^{n-k}\square P^k$ into $G^n$. We can couple $T_{G^n}(\bar{v}, \bar{w})$ and $T_{G^{n-k}\square P^k }(\bar{v}, \bar{w})$ by using the same edge passage times, but weighting the passage times of edges in the last $k$ directions a factor $\lambda$ higher in the former than in the latter. By combining Propositions \ref{prop:fppperturbedGn} and \ref{prop:perturbationsmall}, it follows that
\begin{equation}
\liminf_{n\rightarrow\infty} \mathbb{P}\left( T_{G^n}(\bar{v}, \bar{w}) \leq t^*\left(1+2(\lambda-1)\frac{k}{n}\right)\right) > 0.
\end{equation}
The proposition follows by taking $r < \frac{\varepsilon}{2(\lambda-1)t^*}$.
\end{proof}

Given this proposition, we can proceed as in Subsection \ref{sec:recineq} to prove convergence in probability and $L^1$ as $n\rightarrow\infty$, which finishes the proof of Theorem \ref{thm:main} in the case of standard exponential weights..

\section{Proof of Theorem \ref{thm:gendist}}\label{sec:gendist}
To simplify notation we always assume that $\rho=1$. For $F\in\mathcal{C}(1)$, we couple $T_{H_n}(\cdot, \cdot)$ to $T^F_{H_n}(\cdot, \cdot)$ by letting $T^F_{H_n}(e) = h( T_{H_n}(e))$ for all $e\in H_n$ for some suitable increasing function $h$. In particular, we can take
\begin{equation}
h(t) = \inf\{x \geq 0: F(x) \geq 1-e^{-t}\}.
\end{equation}
One can check that $\lim_{t\rightarrow 0} h(t)/t = 1$. Throughout this section $\varepsilon>0$ will denote an arbitrarily small number, and $\delta>0$ a number such that $(1-\varepsilon)t \leq h(t) \leq (1+\varepsilon)t$ for all $0\leq t \leq \delta$.

An important tool in the coupling arguments below is to further consider first-passage percolation on $H_n$ with independent $\Exp(1+\lambda)$ weights. We denote this by $T^\lambda_{H_n}(\cdot, \cdot)$. This is coupled to $T_{H_n}(\cdot, \cdot)$ and $T^F_{H_n}(\cdot, \cdot)$ by, for each edge $e\in H_n$, generating an independent $\Exp(\lambda)$ random variable $\tau_e$, and letting $T^{\lambda}_{H_n}(e)=\min(T_{H_n}(e), \tau_e)$. Note that $T^\lambda_{H_n}(\cdot, \cdot)$ has the same distribution as $\frac{1}{1+\lambda} T_{H_n}(\cdot, \cdot)$.

\begin{lemma}\label{lemma:Flower}
For any fixed $t, \varepsilon, \lambda>0$, we have that if $$\mathbb{P}(T_{H_n}(v_n, w_n) < t) \rightarrow 0 \text{ as }n\rightarrow\infty,$$ then $$\mathbb{P}(T^F_{H_n}(v_n, w_n) < \frac{1-\varepsilon}{1+\lambda}t) \rightarrow 0\text{ as }n\rightarrow\infty.$$
\end{lemma}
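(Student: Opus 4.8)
The plan is to deduce the lower bound on $T^F_{H_n}$ from the auxiliary process $T^\lambda_{H_n}$, exploiting that $\mathbb{P}\bigl(T^\lambda_{H_n}(v_n,w_n)<s\bigr)=\mathbb{P}\bigl(T_{H_n}(v_n,w_n)<(1+\lambda)s\bigr)$, so in particular $\mathbb{P}\bigl(T^\lambda_{H_n}(v_n,w_n)<\tfrac{t}{1+\lambda}\bigr)\to0$ by hypothesis. Set $c_F:=\tfrac{1-\varepsilon}{1+\lambda}t$ and call an edge $e$ \emph{light} if $T_{H_n}(e)\le\delta$ and \emph{heavy} otherwise. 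The elementary observations I would record are: since $h$ is non-decreasing and $h(s)\ge(1-\varepsilon)s$ on $[0,\delta]$, one has $T^F_{H_n}(e)=h(T_{H_n}(e))\ge(1-\varepsilon)\min(T_{H_n}(e),\delta)$ for \emph{every} edge, and $T^F_{H_n}(e)\ge(1-\varepsilon)\min(T_{H_n}(e),\tau_e)=(1-\varepsilon)T^\lambda_{H_n}(e)$ unless $T_{H_n}(e)>\delta$ \emph{and} $\tau_e>\delta$ (so that $T^\lambda_{H_n}(e)>\delta$); in the latter, ``bad'', case the conditional law of $T^\lambda_{H_n}(e)-\delta$ given $\{T_{H_n}(f)\}_f$ is $\Exp(1+\lambda)$ by memorylessness. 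Finally, every heavy edge contributes at least $h(\delta)>0$ to $T^F_{H_n}(\gamma)$ for any path $\gamma$ through it.

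\textbf{Structure of fast paths.} On the event $E_n=\{T^F_{H_n}(v_n,w_n)<c_F\}$ fix a path $\gamma_n$ with $T^F_{H_n}(\gamma_n)<c_F$. By the last observation $\gamma_n$ has at most $K:=\lfloor c_F/h(\delta)\rfloor$ heavy edges, and summing the pointwise comparison over the remaining (``good'') edges gives that their total $T^\lambda$-length is at most $\tfrac{1}{1-\varepsilon}T^F_{H_n}(\gamma_n)<\tfrac{t}{1+\lambda}$. Bounding the $T^\lambda$-weight of each bad edge of $\gamma_n$ by $\tau_e$, we obtain $T^\lambda_{H_n}(v_n,w_n)\le T^\lambda_{H_n}(\gamma_n)<\tfrac{t}{1+\lambda}+\Xi_n$, where $\Xi_n$ is the total $T^\lambda$-mass of the at most $K$ bad edges of $\gamma_n$; conditionally on $\{T_{H_n}(f)\}_f$ it is stochastically dominated by $K\delta$ plus a $\mathrm{Gamma}(K,1+\lambda)$ variable, a bound that does not depend on the choice of $\gamma_n$.

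\textbf{Conclusion.} When $c_F<h(\delta)$ one has $K=0$, so a $T^F$-fast path is light-only, $T^F_{H_n}(v_n,w_n)\ge(1-\varepsilon)T_{H_n}(v_n,w_n)$, and $\mathbb{P}(E_n)\le\mathbb{P}\bigl(T_{H_n}(v_n,w_n)<\tfrac{t}{1+\lambda}\bigr)\le\mathbb{P}\bigl(T_{H_n}(v_n,w_n)<t\bigr)\to0$; this is the base case. For general $t$ I would iterate on $\lceil c_F/h(\delta)\rceil$: on $E_n$, if $\gamma_n$ uses a heavy edge, peel off the first one, $e_1=(b_n,a_n)$; its heavy-free prefix forces $T_{H_n}(v_n,b_n)<\tfrac{t}{1+\lambda}$ and the suffix satisfies $T^F_{H_n}(a_n,w_n)<c_F-h(\delta)$, and after re-coupling $e_1$ through the fresh variable $\tau_{e_1}$ (which carries a genuine exponential tail, unlike $T_{H_n}(e_1)$) the suffix is again a first-passage problem of the form handled by the lemma, but with threshold lowered by $h(\delta)$, so that the inductive hypothesis closes the estimate.

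\textbf{Main obstacle.} The crux is precisely the term $\Xi_n$: a $T^F$-fast path may genuinely contain a bounded but positive number of heavy edges whose passage times $T_{H_n}(e)$ are arbitrarily large, so they cannot be controlled in the $T_{H_n}$-metric, and $\Xi_n$ is of order $1$ on $E_n$ rather than $o(1)$; it is exactly to replace such edges by ``light'' exponential variables in an auxiliary metric that the $\Exp(1+\lambda)$ coupling is introduced. Making the iteration rigorous requires care on two points: arranging the conditioning so that the replacement variables $\tau_e$ are independent of the (data-dependent) choice of $\gamma_n$ and of $E_n$, and handling the \emph{random} intermediate endpoint $a_n$ — which I would deal with by proving the lemma in the uniform form ``for all bounded-degree graph sequences and all vertex pairs'' and inducting on the number of heavy edges, each induction step strictly decreasing the effective threshold by the fixed amount $h(\delta)$.
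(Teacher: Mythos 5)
Your setup has the right ingredients (the $\Exp(1+\lambda)$ coupling, the function $h$, the $\delta$-threshold, and the bound $K$ on the number of heavy edges of a $T^F$-fast path), but the proof is not closed, and the missing step is exactly where the paper's argument lives. The key observation you never make is that it suffices for the \emph{conditional} probability $\mathbb{P}\bigl(T^\lambda_{H_n}(v_n,w_n)<\tfrac{t}{1+\lambda}\,\big\vert\,T^F_{H_n}(v_n,w_n)<\tfrac{(1-\varepsilon)t}{1+\lambda}\bigr)$ to be bounded away from zero, not to tend to one: then $\mathbb{P}(T_{H_n}<t)=\mathbb{P}\bigl(T^\lambda_{H_n}<\tfrac{t}{1+\lambda}\bigr)\geq c\,\mathbb{P}\bigl(T^F_{H_n}<\tfrac{(1-\varepsilon)t}{1+\lambda}\bigr)$, and the hypothesis kills the right-hand side. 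With this in place your ``$\Xi_n$ is of order $1$'' obstacle evaporates: condition on $\{T_{H_n}(e)\}_e$, which already determines $\{T^F_{H_n}(e)\}_e=\{h(T_{H_n}(e))\}_e$, hence the event $E_n$, a choice of fast path $\gamma_n$, and its at most $K$ heavy edges; then ask that the fresh independent variables $\tau_e$ on those heavy edges all satisfy $\tau_e\leq \frac{h(\delta)}{1-\varepsilon}$. This has conditional probability at least $\bigl(1-e^{-\lambda h(\delta)/(1-\varepsilon)}\bigr)^{K}$, uniformly in $n$ and in the conditioning, and on this event every edge of $\gamma_n$ obeys $T^\lambda_{H_n}(e)\leq\frac{1}{1-\varepsilon}T^F_{H_n}(e)$ (light edges via $T^\lambda\leq T\leq\frac{1}{1-\varepsilon}h(T)$, heavy edges via $T^\lambda\leq\tau_e\leq\frac{h(\delta)}{1-\varepsilon}\leq\frac{1}{1-\varepsilon}T^F(e)$), so $T^\lambda_{H_n}(v_n,w_n)\leq T^\lambda_{H_n}(\gamma_n)<\frac{t}{1+\lambda}$. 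This is the paper's proof; note also that your first ``care point'' is a non-issue, since $E_n$ and $\gamma_n$ are measurable with respect to $\{T_{H_n}(e)\}_e$ while the $\tau_e$ are independent of it by construction.

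The iteration you propose instead is a genuine gap, not merely an unpolished detail. ``Applying the inductive hypothesis'' to the suffix from $a_n$ to $w_n$ does not match the lemma: the lemma's hypothesis is that the $\Exp(1)$ passage time between the \emph{given deterministic} endpoints $v_n,w_n$ is unlikely to be small, and no such hypothesis is available (or relevant) for the random, weight-dependent pair $(a_n,w_n)$; what the peeling actually needs is a conditional statement converting ``$T^F$-fast suffix'' into ``$T^\lambda$-fast suffix with uniformly positive conditional probability'', which is not the statement of the lemma at all. If you formulate that conditional statement and induct on the number of heavy edges, every step is the one-line estimate above applied to a single edge, and the induction collapses to the direct argument -- so the detour buys nothing, and as written (``the inductive hypothesis closes the estimate'', followed by a list of unresolved obstacles) the proof is incomplete. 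A minor slip along the way: on a bad edge, $T^\lambda_{H_n}(e)-\delta$ is stochastically dominated by an $\Exp(\lambda)$ variable, not $\Exp(1+\lambda)$, but this is immaterial to the main issue.
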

\begin{proof}
Let $s=\frac{1}{1+\lambda} t$. As
\begin{align*}
&\mathbb{P}(T_{H_n}(v_n, w_n) < t) = \mathbb{P}(T^{\lambda}_{H_n}(v_n, w_n) < s)\\
&\qquad \geq \mathbb{P}\left( T^\lambda_{H_n}(v_n, w_n) < s \middle\vert T^F_{H_n}(v_n, w_n) < (1-\varepsilon)s\right)\mathbb{P}(T^F_{H_n}(v_n, w_n) < (1-\varepsilon)s),
\end{align*}
it suffices to show that 
\begin{equation}
\mathbb{P}\left( T^\lambda_{H_n}(v_n, w_n) < s \middle\vert T^F_{H_n}(v_n, w_n) < (1-\varepsilon)s\right)
\end{equation}
is bounded away from $0$ as $n\rightarrow\infty$.

Condition on the edge passage times $\{T_{H_n}(e)\}_{e\in H_n}$ and $\{T^{F}_{H_n}(e)\}_{e\in H_n}$. Suppose there is a path $\gamma$ from $v_n$ to $w_n$ such that $T^F_{H_n}(\gamma) < (1-\varepsilon)s$. Then, for any $e\in \gamma$ such that $T_{H_n}^{F}(e) < h(\delta)$ we have
\begin{equation}
T_{H_n}^\lambda(e)  \leq T_{H_n}(e) \leq \frac{1}{1-\varepsilon} h(T_{H_n}(e)) = \frac{1}{1-\varepsilon} T_{H_n}^{F}(e).
\end{equation}
As at most $\frac{1-\varepsilon}{h(\delta)}s$ edges in $\gamma$ satisfy $T_{H_n}^{F}(e) \geq h(\delta)$, the probability that $\tau_e \leq \frac{h(\delta)}{1-\varepsilon}$ for all such edges is bounded away from zero. If this happens, we have $T_{H_n}^\lambda(e) \leq \frac{1}{1-\varepsilon} T_{H_n}^{F}(e)$ for all $e\in \gamma$, and hence $T_{H_n}^\lambda(v_n, w_n) \leq T_{H_n}^\lambda(\gamma) < s$, as desired.
\end{proof}

As $H_n$ has bounded degree, there are almost surely $\Exp(1)$-geodesics from $v_n$ to $w_n$. One can for instance see this by noting that the number of self-avoiding paths from $v_n$ to $w_n$ with passage time at most $t$ is almost surely finite for any $t\geq 0$. Moreover, as the standard exponential distribution is continuous, almost surely no two self-avoiding paths have the same passage time, which implies that such a  geodesic is almost surely unique. Below, we let $\Gamma_n$ denote this unique $\Exp(1)$-geodesic.

\begin{lemma} Let $\varepsilon$ and $\delta$ be as above. Assume $T_{H_n}(v_n, w_n)\rightarrow t$ in probability as $n\rightarrow\infty$. Then, as $n\rightarrow\infty$
\begin{enumerate}[label=\roman*)]
\item $\mathbb{P}\left( T_{H_n}(e) \leq \delta\text{ for all }e\in \Gamma_n\right)\rightarrow 1$.
\item $T^F_{H_n}(v_n, w_n) \rightarrow t$ in probability,
\item Suppose $\int_0^\infty x\,dF(x) < \infty$. If $T_{H_n}(v_n, w_n)$ converges in $L^1$, then so does $T^F_{H_n}(v_n, w_n)$.
\end{enumerate}
\end{lemma}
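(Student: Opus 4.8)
I would deduce all three parts from comparisons with ordinary $\Exp(1)$ first-passage percolation, exploiting the two couplings already in play: the monotone coordinatewise coupling $T^F_{H_n}(e)=h(T_{H_n}(e))$ (so that $(1-\varepsilon)T_{H_n}(e)\le T^F_{H_n}(e)\le(1+\varepsilon)T_{H_n}(e)$ whenever $T_{H_n}(e)\le\delta$), and the ``speed-up'' coupling $T^\lambda_{H_n}(e)=\min(T_{H_n}(e),\tau_e)$ with $\tau_e\sim\Exp(\lambda)$ independent, for which $T^\lambda_{H_n}(v_n,w_n)$ has the law of $\tfrac{1}{1+\lambda}T_{H_n}(v_n,w_n)$. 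The lower bounds in (ii) and (iii) are then immediate from Lemma~\ref{lemma:Flower} on letting $\varepsilon,\lambda\downarrow 0$, and the role of (i) is to control the heavy edges a priori so that the upper bounds become sharp.

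\textbf{Part (i) — the main obstacle.} Write $\Gamma_n$ for the a.s.\ unique $\Exp(1)$-geodesic and $B_n=\{\exists e\in\Gamma_n:T_{H_n}(e)>\delta\}$; the goal is $\mathbb{P}(B_n)\to0$. The conceptual reason is that, once $T_{H_n}(v_n,w_n)$ concentrates at $t$, the geodesic has so much freedom that it never needs a macroscopically heavy edge. To make this quantitative I would fix a small $\eta$ and work on the high-probability event $G_n=\{T_{H_n}(v_n,w_n)\le t+\eta\}$, on which the total weight of $\Gamma_n$ — and hence all quantities below — is bounded. Compare $T_{H_n}(v_n,w_n)$ with $\widehat{T}_n$, the passage time obtained after replacing the weight of each edge $e$ with $T_{H_n}(e)>\delta$ by $\min(T_{H_n}(e),\,\delta+\tau_e)$, $\tau_e\sim\Exp(\lambda)$ i.i.d.; coupling to the $\Exp(1+\lambda)$-process gives $T^\lambda_{H_n}(v_n,w_n)\le\widehat{T}_n\le T_{H_n}(v_n,w_n)$, and since $T^\lambda_{H_n}(v_n,w_n)\stackrel{d}{=}\tfrac1{1+\lambda}T_{H_n}(v_n,w_n)$ a short argument (expand $\mathbb{E}[(T_{H_n}(v_n,w_n)-T^\lambda_{H_n}(v_n,w_n))\mathbbm{1}_{G_n}]$ using that both processes are bounded on $G_n$ and converge in probability) shows $\limsup_n\mathbb{E}[(T_{H_n}(v_n,w_n)-\widehat{T}_n)\mathbbm{1}_{G_n}]\le\tfrac{\lambda t}{1+\lambda}+\eta$, which can be made arbitrarily small. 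On $B_n$ the geodesic itself witnesses $\widehat{T}_n\le T_{H_n}(v_n,w_n)-\bigl(T_{H_n}(e^\star)-\delta-\tau_{e^\star}\bigr)^+$ for the heaviest edge $e^\star$ of $\Gamma_n$; the delicate point is that this gain is negligible when $T_{H_n}(e^\star)$ only slightly exceeds $\delta$. I would close this gap using the rigidity of the unique geodesic: conditionally on $\{T_{H_n}(f):f\neq e^\star\}$ one has $\var\bigl(T_{H_n}(v_n,w_n)\mid T_{H_n}(f),\,f\neq e^\star\bigr)=\var\bigl(\Exp(1)\wedge\theta_{e^\star}\bigr)$, where $\theta_{e^\star}>T_{H_n}(e^\star)>\delta$ is the detour threshold of $e^\star$, so on $B_n$ this conditional variance is at least a positive constant $c(\delta)$, and the law of total variance then keeps $\var\,T_{H_n}(v_n,w_n)$ bounded below on $B_n$, contradicting $T_{H_n}(v_n,w_n)\to t$. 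Making the random identity of $e^\star$ precise — e.g.\ by peeling over candidate heaviest edges, or by using that on $G_n$ only boundedly many edges of $\Gamma_n$ can carry weight $\ge\delta/2$ — is where I expect most of the effort to go.

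\textbf{Part (ii).} For the upper bound, on the event of (i) every $e\in\Gamma_n$ satisfies $T_{H_n}(e)\le\delta$, so $h(T_{H_n}(e))\le(1+\varepsilon)T_{H_n}(e)$ and
\[
T^F_{H_n}(v_n,w_n)\ \le\ \sum_{e\in\Gamma_n}h\bigl(T_{H_n}(e)\bigr)\ \le\ (1+\varepsilon)\sum_{e\in\Gamma_n}T_{H_n}(e)\ =\ (1+\varepsilon)\,T_{H_n}(v_n,w_n).
\]
Since $\mathbb{P}(\text{(i) holds})\to1$ and $T_{H_n}(v_n,w_n)\to t$ in probability, this yields $\mathbb{P}\bigl(T^F_{H_n}(v_n,w_n)>(1+\varepsilon)t+\eta\bigr)\to0$ for every $\varepsilon,\eta>0$, hence $\mathbb{P}(T^F_{H_n}(v_n,w_n)>t+\eta')\to 0$ for all $\eta'>0$. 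For the lower bound, apply Lemma~\ref{lemma:Flower} with $t$ replaced by $t-\eta$ (legitimate because $\mathbb{P}(T_{H_n}(v_n,w_n)<t-\eta)\to0$) and with $\varepsilon,\lambda$ so small that $\tfrac{1-\varepsilon}{1+\lambda}(t-\eta)>t-2\eta$, obtaining $\mathbb{P}(T^F_{H_n}(v_n,w_n)<t-2\eta)\to0$. Together these give $T^F_{H_n}(v_n,w_n)\to t$ in probability.

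\textbf{Part (iii).} Given (ii), $L^1$-convergence follows once $\{T^F_{H_n}(v_n,w_n)\}_n$ is uniformly integrable. On the (high-probability) event of (i) the pointwise bound $T^F_{H_n}(v_n,w_n)\le(1+\varepsilon)T_{H_n}(v_n,w_n)$ holds, and its right-hand side is uniformly integrable because, by hypothesis, $T_{H_n}(v_n,w_n)$ converges in $L^1$ and is therefore u.i.; on the complementary event, which has probability $o(1)$, I would dominate $T^F_{H_n}(v_n,w_n)$ by $\sum_{e\in\pi_n}h(T_{H_n}(e))$ for a fixed shortest graph-path $\pi_n$ and estimate $\mathbb{E}\bigl[\sum_{e\in\pi_n}h(T_{H_n}(e))\,;\,B_n\bigr]$ via $\int_0^\infty x\,dF(x)<\infty$ and a Cauchy--Schwarz (or de la Vallée Poussin) bound against $\mathbb{P}(B_n)\to0$; alternatively, since (ii) already gives $\mathbb{E}\,T^F_{H_n}(v_n,w_n)\ge t-o(1)$ by Fatou and the lower tail, it suffices to show $\mathbb{E}\,T^F_{H_n}(v_n,w_n)\le t+o(1)$, which again comes from the (i)-event bound together with $\mathbb{E}\,T_{H_n}(v_n,w_n)\to t$. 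The main work remains in (i); given it, (ii) and (iii) are essentially bookkeeping.
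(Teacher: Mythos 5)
Your part (ii) is exactly the paper's argument and is fine, but both (i) and (iii) contain genuine gaps. For (i), the paper's proof is one line using the coupling you yourself set up: if some $e\in\Gamma_n$ has $T_{H_n}(e)>\delta$, then on the independent event $\{\tau_e\le\delta/2\}$ (probability $1-e^{-\lambda\delta/2}$, independent of the $T$-weights) the path $\Gamma_n$ already shows $T^\lambda_{H_n}(v_n,w_n)\le T_{H_n}(v_n,w_n)-\delta/2$; since $T^\lambda_{H_n}(v_n,w_n)\xrightarrow{p}t/(1+\lambda)$ and $T_{H_n}(v_n,w_n)\xrightarrow{p}t$, taking $\lambda$ small makes a gap of $\delta/2$ asymptotically impossible, so $\mathbb{P}(\exists e\in\Gamma_n:T_{H_n}(e)>\delta)\to0$. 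Your version loses this uniform gain because you replace the heavy weight by $\min(T_{H_n}(e),\delta+\tau_e)$ instead of $\min(T_{H_n}(e),\tau_e)$, and the rigidity argument you fall back on is unsound as stated: (a) convergence in probability gives no upper bound on $\var T_{H_n}(v_n,w_n)$ (no second-moment or uniform-integrability control is assumed, and $T_{H_n}(v_n,w_n)$ is not bounded by a fixed number of exponentials), so a variance lower bound cannot contradict it; (b) conditioning on ``all weights except $e^\star$'' is not conditioning on a fixed $\sigma$-algebra, and the peeling you allude to only yields $\var T_{H_n}(v_n,w_n)\ge c(\delta)\,\mathbb{P}(e\in\Gamma_n,\,T_{H_n}(e)>\delta)$ for each fixed $e$ separately, which does not lower-bound the probability of the union.

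For (iii), the missing piece is precisely the expectation of $T^F_{H_n}(v_n,w_n)$ on the bad event, and your two proposed fixes fail. Bounding it by $\sum_{e\in\pi_n}h(T_{H_n}(e))$ along a fixed shortest path gives a random variable with mean $|\pi_n|\int_0^\infty x\,dF(x)$, and $|\pi_n|=\dist_{H_n}(v_n,w_n)$ typically grows with $n$ (it is at least $n$ in the application $H_n=G^n$); moreover $F\in\mathcal{C}_{L^1}(\rho)$ gives no second moment, so Cauchy--Schwarz is unavailable, and even with one, $\mathbb{P}(B_n)\to0$ comes with no rate, so the product need not vanish; uniform integrability of these sums fails for the same reason. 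The bound ``$T^F\mathbbm{1}_{(i)}\le(1+\varepsilon)T$ plus $\mathbb{E}T\to t$'' only controls the good-event contribution. The paper instead always bounds $T^F_{H_n}(v_n,w_n)\le\sum_{e\in\Gamma_n}h(T_{H_n}(e))$ along the $\Exp(1)$-geodesic, writes this expectation as $\int_0^\infty e^{-s}h(s)f_n(s)\,ds$ with $f_n(s)=\sum_e\mathbb{P}(e\in\Gamma_n\mid T_{H_n}(e)=s)$, uses that $f_n$ is decreasing to bound the heavy-edge part by $f_n(\delta)\int_0^\infty x\,dF(x)$, and then proves $f_n(\delta)\to0$ by a variant of part (i) together with $\mathbb{E}T_{H_n}(v_n,w_n)\to t$. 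Some argument of this kind (controlling the expected heavy-edge contribution of the geodesic, not of a fixed path) is needed to close (iii).
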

\begin{proof}
$i)$ Condition on $\{T_{H_n}(e)\}_{e\in H_n}$. If there is an edge $e\in \Gamma_n$ such that $T_{H_n}(e) > \delta$ then with probability $1-e^{-\lambda\delta/2}>0$ we have $\tau_e\leq \frac{\delta}{2}$ and hence 
\begin{equation}\label{eq:smallweights}
T_{H_n}^\lambda(v_n, w_n) < T_{H_n}(v_n, w_n) - \frac{\delta}{2}.
\end{equation}
But as $T_{H_n}^\lambda(v_n, w_n) \xrightarrow{p} \frac{t}{1+\lambda}$ and $T_{H_n}(v_n, w_n) \xrightarrow{p} t$, the probability of \eqref{eq:smallweights} tends to $0$ assuming $\lambda$ is sufficiently small.

$ii)$ If $T_{H_n}(e) \leq \delta$ for all $e\in\Gamma_n$, then
\begin{equation}
\begin{split}
T^F_{H_n}(v_n, w_n) &\leq T^F_{H_n}(\Gamma_n) = \sum_{e\in\Gamma} h(T_{H_n}(e))\\ &\leq  \sum_{e\in\Gamma} (1+\varepsilon) T_{H_n}(e) = (1+\varepsilon) T_{H_n}(v_n, w_n).
\end{split}
\end{equation}
By $i)$ this occurs with probability tending to $1$. The corresponding lower bound follows from Lemma \ref{lemma:Flower}.

$iii)$ As $T^F_{H_n}(v_n, w_n)$ tends to $t$ in probability and is non-negative, it suffices to show that $\mathbb{E}T^F_{H_n}(v_n, w_n) \leq t+o(1)$. For any measurable map $\phi:[0, \infty) \rightarrow[0, \infty)$, let $T_n(\phi) = \sum_{e\in \Gamma_n} \phi(T_{H_n}(e))$. Clearly $T_{H_n}(v_n, w_n) = T_n(id)$ and $T^F_{H_n}(v_n, w_n) \leq T_n(h)$. We can rewrite the expectation of $T_n(\phi)$ as
\begin{equation}
\begin{split}
\mathbb{E} \sum_{e\in H_n} \mathbbm{1}_{e\in \Gamma_n} \phi(T_{H_n}(e)) &=  \sum_{e\in H_n} \int_0^\infty e^{-t} \phi(t) \mathbb{P}\left(e\in \Gamma_n \vert T_{H_n}(e) = t\right)\,dt\\
&= \int_0^\infty e^{-t} \phi(t) f_n(t)\,dt,
\end{split}
\end{equation}
where $f_n(t) = \sum_{e\in H_n} \mathbb{P}\left(e\in \Gamma_n \vert T_{H_n}(e) = t\right)$. As $\mathbb{E}T_n(id) = t+o(1) < \infty$, $f_n(t)$ is finite almost everywhere. Moreover, a simple coupling argument shows that $\mathbb{P}\left(e\in \Gamma_n \vert T_{H_n}(e) = t\right)$ is decreasing in $t$, hence so is $f_n(t)$.

Now,
\begin{equation}
\begin{split}
\mathbb{E} T_n(h) &= \int_0^\infty e^{-t} h(t) f_n(t)\,dt \\
&\leq (1+\varepsilon) \int_0^\delta e^{-t} t f_n(t)\,dt + f_n(\delta) \int_\delta^\infty e^{-t} h(t) \,dt\\
&\leq (1+\varepsilon) \int_0^\infty e^{-t} t f_n(t)\,dt + f_n(\delta)\int_0^\infty e^{-t} h(t)\,dt\\
&= (1+\varepsilon) \mathbb{E} T_n(id) + f_n(\delta)\int_0^\infty x\,dF(x).
\end{split}
\end{equation}
It remains to show that $f_n(\delta)\rightarrow 0$ as $n\rightarrow\infty$ for any $\delta>0$. Let $g(t)=\mathbbm{1}_{t\leq \delta/2} t$. By $i)$, and the fact that $T_n(id)\xrightarrow{p} t$, we know that $T_n(g) \geq t-o(1)$ with probability $1-o(1)$. In particular $\mathbb{E} T_n(g) \geq t-o(1)$. Hence $o(1) \geq \mathbb{E}T_n(id) - \mathbb{E}T_n(g) = \int_{\delta/2}^\infty e^ {-t} t f_n(t)\,dt \geq \int_{\delta/2}^\delta e^ {-t} t f_n(t)\,dt \geq f_n(\delta)\int_{\delta/2}^\delta e^{-t} t\,dt,$ as desired.
\end{proof}

\section*{Acknowledgements}
I am very grateful to my supervisor, Peter Hegarty, for his valuable comments and discussions. I would further like to thank the anonymous referees for their careful reading of the paper, and many thoughtful comments and suggestions.

\begin{bibdiv}
\begin{biblist}

\bib{A89}{book}{
   author={Aldous, David},
   title={Probability approximations via the Poisson clumping heuristic},
   series={Applied Mathematical Sciences},
   volume={77},
   publisher={Springer-Verlag, New York},
   date={1989},
   pages={xvi+269},
   isbn={0-387-96899-7},
   review={\MR{969362 (90k:60004)}},
   doi={10.1007/978-1-4757-6283-9},
}

\bib{AS08}{book}{
   author={Alon, Noga},
   author={Spencer, Joel H.},
   title={The probabilistic method},
   series={Wiley-Interscience Series in Discrete Mathematics and
   Optimization},
   edition={3},
   note={With an appendix on the life and work of Paul Erd\H os},
   publisher={John Wiley \& Sons, Inc., Hoboken, NJ},
   date={2008},
   pages={xviii+352},
   isbn={978-0-470-17020-5},
   review={\MR{2437651 (2009j:60004)}},
   doi={10.1002/9780470277331},
}

\bib{AT16}{article}{
   author={Auffinger, Antonio},
   author={Tang, Si},
   title={On the time constant of high dimensional first passage
   percolation},
   journal={Electron. J. Probab.},
   volume={21},
   date={2016},
   pages={Paper No. 24, 23},
   issn={1083-6489},
   review={\MR{3485366}},
   doi={10.1214/16-EJP1},
}

\bib{BlairStahn}{article}{
author={Blair-Stahn, Nathaniel D.},
title={First passage percolation and competition models},
note={Available at \href{http://arxiv.org/abs/1005.0649}{arXiv:1005.0649}},
}

\bib{BK97}{article}{
   author={Bollob{\'a}s, B\'ela.},
   author={Kohayakawa, Yoshiharu},
   title={On Richardson's model on the hypercube},
   conference={
      title={Combinatorics, geometry and probability},
      address={Cambridge},
      date={1993},
   },
   book={
      publisher={Cambridge Univ. Press, Cambridge},
   },
   date={1997},
   pages={129--137},
   review={\MR{1476439 (98j:05122)}},
}

\bib{CEG11}{article}{
   author={Couronn{\'e}, Olivier},
   author={Enriquez, Nathana{\"e}l},
   author={Gerin, Lucas},
   title={Construction of a short path in high-dimensional first passage
   percolation},
   journal={Electron. Commun. Probab.},
   volume={16},
   date={2011},
   pages={22--28},
   issn={1083-589X},
   review={\MR{2753301 (2012e:60254)}},
   doi={10.1214/ECP.v16-1595},
}

\bib{CD83}{article}{
   author={Cox, J. Theodore},
   author={Durrett, Richard},
   title={Oriented percolation in dimensions $d\geq 4$: bounds and
   asymptotic formulas},
   journal={Math. Proc. Cambridge Philos. Soc.},
   volume={93},
   date={1983},
   number={1},
   pages={151--162},
   issn={0305-0041},
   review={\MR{684285 (84e:60150)}},
   doi={10.1017/S0305004100060436},
}

\bib{Dhar86}{article}{
   author={Dhar, Deepak},
   title={Asymptotic shape of Eden clusters},
   part={B},
   doi={10.1007/978-94-009-5165-5\_31},
%   conference={
%      title={Combinatorics, geometry and probability},
%      address={Cambridge},
%      date={1993},
%   },
   book={
   	  title={On Growth and Form},
   	  subtitle={Fractal and Non-Fractal Patterns in Physics}
      publisher={Springer, Netherlands},
      editor={Stanley, H. Eugene and Ostrowsky, Nicole}
      date={1986},
      series={NATO ASI Series},
      volume={100},
   },
   pages={288--292}
   %review={},
}

\bib{Dhar88}{article}{
   author={Dhar, Deepak},
   title={First passage percolation in many dimensions},
   journal={Phys. Lett. A},
   volume={130},
   date={1988},
   number={4-5},
   pages={308--310},
   issn={0375-9601},
   review={\MR{949856 (89d:82083)}},
   doi={10.1016/0375-9601(88)90616-0},
}

\bib{FP93}{article}{
   author={Fill, James Allen},
   author={Pemantle, Robin},
   title={Percolation, first-passage percolation and covering times for
   Richardson's model on the $n$-cube},
   journal={Ann. Appl. Probab.},
   volume={3},
   date={1993},
   number={2},
   pages={593--629},
   issn={1050-5164},
   review={\MR{1221168 (94h:60145)}},
}

\bib{Grimmett}{book}{
   author={Grimmett, Geoffrey},
   title={Percolation},
   series={Grundlehren der Mathematischen Wissenschaften [Fundamental
   Principles of Mathematical Sciences]},
   volume={321},
   edition={2},
   publisher={Springer-Verlag, Berlin},
   date={1999},
   pages={xiv+444},
   isbn={3-540-64902-6},
   review={\MR{1707339 (2001a:60114)}},
   doi={10.1007/978-3-662-03981-6},
}

\bib{Kesten}{article}{
   author={Kesten, Harry},
   title={Aspects of first passage percolation},
   conference={
      title={\'Ecole d'\'et\'e de probabilit\'es de Saint-Flour, XIV---1984},
   },
   book={
      series={Lecture Notes in Math.},
      volume={1180},
      publisher={Springer, Berlin},
   },
   date={1986},
   pages={125--264},
   review={\MR{876084 (88h:60201)}},
   doi={10.1007/BFb0074919},
}

\bib{L15}{article}{
author={Li, Li},
title={Phase transition for accessibility percolation on hypercubes},
note={Available at arXiv:1502.07642},
}

\bib{M15}{article}{
author={Martinsson, Anders},
title={Accessibility percolation and first-passage site percolation on the unoriented binary hypercube},
note={Available at http://arxiv.org/abs/1501.02206},
}

\bib{M16}{article}{
   author={Martinsson, Anders},
   title={Unoriented first-passage percolation on the $n$-cube},
   journal={Ann. Appl. Probab.},
   volume={26},
   date={2016},
   number={5},
   pages={2597--2625},
   issn={1050-5164},
   review={\MR{3563188}},
   doi={10.1214/15-AAP1155},
}

\end{biblist}
\end{bibdiv}

\end{document}